\def \eps{\varepsilon}
\def \a{\alpha}
\def \g{\gamma}
\def \s{\sigma}
\def \d{\delta}
\def \l{\lambda}
\def \k{\xi}
\def \vp{\varphi}
\def \G{\Gamma}
\def \R{\mathbb R}
\def \C{\mathbb C}
\def \Z{\mathbb Z}
\def \N{\mathbb N}
\def \E{\mathbb E}
\def \K{\mathbb K}
\def \supp{\text{supp}}
\def \su{supp \,}
\def \rep{\text{rep}}
\def \A{\mathcal A}
\def \P{\mathcal P}
\def \H{\mathcal H}
\def \F{\mathcal F}
\def \cara{\mathbbm 1}
\def \Vect{ {\rm Vect}}
\def \st{\; | \;}
\def \di{\varDelta}
\def \ds{\bigtriangleup}
\def \bu{\bullet}
\newcommand*{\longhookrightarrow}{\ensuremath{\lhook\joinrel\relbar\joinrel\rightarrow}}
\newcommand{\lexp}[2]{{\vphantom{#2}}^{#1}\hspace{-0.8mm}#2}
\newcommand{\rexp}[2]{#2\hspace{-0.7mm}{\vphantom{}}^{#1}}
\newcommand{\closeunder}[2]{\underset{\raise\hspace{-0.8mm}box{1mm}{\ensuremath{#2}}}{#1}}
\newcommand*{\quot}[2]%
{\ensuremath{%
    #1/\!\raisebox{-.65ex}{\ensuremath{#2}}}}
\newtheorem{theor}{Theorem}
\newtheorem{corr}[theor]{Corollary}
\newtheorem{theo}{Theorem}[section]
\newtheorem{prop}[theo]{Proposition}
\newtheorem{dfpr}[theo]{Definition-Proposition}
\newtheorem{nt}[theo]{Notation}
\newtheorem{lem}[theo]{Lemma}
\newtheorem{cor}[theo]{Corollary}
\newtheorem{df}[theo]{Definition}
\newtheorem{rmq}[theo]{Remark}
\newtheorem{exemple}[theo]{Example}
\newenvironment{noth}[1][]%
	{\par\parindent0pt\medskip\textbf{Theorem\ifx&#1&\else\space#1\fi.}%
	  \itshape\par}{\par\medskip}
\newenvironment{nocor}[1][]%
	{\par\parindent0pt\medskip\textbf{Corollary\ifx&#1&\else\space#1\fi.}%
	  \itshape\par}{\par\medskip}
\title{Isometric affine actions on Banach spaces and spaces with labelled partitions}
\date{}
\author{S. Arnt}
\begin{document}

\renewcommand{\proofname}{Proof}
\renewcommand\refname{References}
\renewcommand\contentsname{Table of contents}
\renewcommand{\abstractname}{Abstract}
\renewcommand{\thefootnote}{\*}

\maketitle

\footnotetext{The research of the author was partially supported by grant 20-149261 of Swiss SNF.}

\begin{abstract}
We define the structure of spaces with labelled partitions which generalizes the structure of spaces with measured walls and study the link between actions by automorphisms on spaces with labelled partitions and isometric affine actions on Banach spaces, and more particularly, on $L^p$ spaces. We build natural spaces with labelled partitions for the action of various contructions of groups, namely: direct sum; semi-direct product; wreath product and amalgamated free product. We apply this to prove that the wreath product of a group with property $PL^p$ by a group with Haagerup property has property $PL^p$ and the amalgamated free product of groups with property $PL^p$ has property $PL^p$.
\end{abstract}

\section{Introduction}
A locally compact second countable group $G$ has \emph{Haagerup property} (or is \emph{a-(T)-menable}) if there exists a proper continuous isometric affine action of $G$ on a Hilbert space; this property can be seen as a strong negation of Kazhdan's property (T) (an overview of the Haagerup property can be found in \cite{checowjoljulval}). Groups having Haagerup property are known to satisfy the Baum-Connes conjecture by a result of Higson and Kasparov in \cite{higkas} (see \cite{julg} for further details).
Haagerup property is closed by taking subgroups, direct products, amalgamated products over finite subsets but it is not stable by group extensions in general, even in the case of semi-direct products. However, Cornulier, Stalder and Valette recently proved in \cite{wreath} that it is stable by a particular kind of extension, namely the wreath product. They use for their proof the connexion between Haagerup property and spaces with measured walls, that we will now explain.

A \emph{space with walls} is a pair $(X,W)$ where $X$ is a set and $W$ is a family of partitions of $X$ in two pieces called \emph{walls} such that any pair of points of $X$ is separated by finitely many walls. This notion was introduced by Haglund and Paulin in \cite{hagpaul} and generalized in a topological setting by Cherix, Martin and Valette in \cite{chemarval} to \emph{space with measured walls} (see Definition \ref{labpart_meswalls}). It was gradually realised that the Haagerup property is equivalent to the existence of a proper action on a space with measured walls; more precisely, we have the following theorem: \emph{a locally compact second countable group has the Haagerup property if, and only if, it acts properly by automorphisms on a space with measured walls.} Using results of Robertson and Steger (see \cite{robste}), Cherix, Martin and Valette in \cite{chemarval}, proved this theorem for discrete groups and Chatterji, Drutu and Haglund extended the equivalence to locally compact second 
countable groups using the notion of median metric spaces in \cite{chadruhag}. The stability of the Haagerup property by wreath product was established in \cite{wreath} by constructing a space with measured walls from the structures of measured walls on each factor, and moreover, in the same article, Cornulier, Stalder and Valette generalized this result to the permutational wreath product (see Definition \ref{wreath_df}) when the index set $I$ is a quotient by a co-Haagerup subgroup of the shifting group $G$ (see \cite{chiioa} for a counter example when the pair $(G,I)$ has relative property (T)). This result led to the first example of Haagerup groups which are not weakly amenable in the sense of \cite{cowhaa}. \\
The notion of Haagerup property naturally extends to proper isometric affine action on Banach spaces. Recent works have been made about isometric actions on Banach spaces: in \cite{haaprz}, Haagerup and Przybyszewska showed that every locally compact second countable group $G$ acts properly by affine isometries on the reflexive Banach space $\bigoplus_{n \in \N}^2 L^{2n}(G,\mu)$ where $\mu$ is the Haar measure; Cornulier, Tessera, Valette introduced in \cite{cortesval} property $(BP_0^V)$ for $V$ a Banach space as a tool to show that the simple Lie group $G=Sp(n,1)$ acts properly by isometries on $L^p(G)$ for $p > 4n+2$; in \cite{badfurgelmon}, Bader, Furman Gelander and Monod, studied an analog of property (T) in terms of $L^p$ spaces and more generally, of superreflexive Banach spaces. One of the motivation of this topic is given by a recent result of Kasparov and Yu in \cite{kasyu} which asserts that the existence of coarse embeddings of a finitely generated group in a uniformly convex Banach space 
implies the coarse geometric Novikov conjecture for this group. See \cite{now} for an overview of results and questions about isometric affine actions on Banach spaces. \\
We will focus on specific Banach spaces, namely, $L^p$ spaces. For $p \geq 1$, we say that a locally compact second countable group $G$ has \emph{property $PL^p$} (or is \emph{a-$FL^p$-menable}) if there exists a proper continuous isometric affine action on a $L^p$ space. See for instance \cite{chadruhag}, for a characterisation of property $PL^p$ for $p\in [1,2]$ in terms of Haagerup property. An important example is the following theorem due to Yu (see \cite{yuhyp}): \emph{let $\G$ be a discrete Gromov hyperbolic group. Then there exists $p \geq 2$ such that $\G$ has property $PL^p$.} Yu proved this result by giving an explicit proper isometric affine action of $\G$ on $\ell^{p}(\G\times \G\st d(x,y)\leq R)$ using a construction of Mineyev in \cite{minhyp}; see \cite{bourd1} or \cite{nicahyp} for other proofs of this result in terms of boundaries of $G$. A remarkable consequence is that there exists infinite groups with property (T) (and hence, without Haagerup property) which have property $PL^p$ for some 
$p >2$.

In this paper, we define a generalization of the structure of spaces with measured walls, namely, the structure of spaces with labelled partitions which provides a flexible framework, in terms of geometry and stability by various type of group constructions, for isometric affine actions on Banach spaces. In Paragraph \ref{subsec_action}, we establish the following result which links isometric affine actions on Banach spaces and actions by automorphisms on spaces with labelled partitions :

\begin{theor}\label{labpart_affact}
   Let $G$ be topological group.
   \begin{enumerate}
    \item[1.] If $G$ acts (resp. acts properly) continuously by affine isometries on a Banach space $B$ then there exists a structure $(G,\P,F(\P))$ of space with labelled partitions on $G$ such that $G$ acts (resp. acts properly) continuously by automorphisms on $(G,\P,F(\P))$ via its left-action on itself. Moreover, there exists a linear isometric embedding $F(\P) \hookrightarrow B$.
    \item[2.] If $G$ acts (resp. acts properly) continuously by automorphisms on a space with labelled partitions $(X,\P,F(\P))$ then there exists a (resp. proper) continuous isometric affine action of $G$ on a Banach space $B$. Moreover, $B$ is a closed subspace of $F(\P)$.
   \end{enumerate}
  \end{theor}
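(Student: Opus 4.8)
The plan is to exploit a single duality between the two pictures: the \emph{sections} $c(x)\colon p\mapsto p(x)$ of labelled partitions play the role of an orbit map, the \emph{differences} $p\mapsto p(x)-p(y)$ play the role of cocycle increments, and the isometry of $F(\P)$ induced by an automorphism plays the role of the linear part of an affine action. I treat the two statements in turn, statement 2 being the more transparent one.

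\smallskip

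\emph{Statement 2.} Fix a base point $x_0\in X$ and, for $g\in G$, let $\pi(g)$ be the linear isometry of $F(\P)$ induced by the automorphism $g$; by the definition of an automorphism of a space with labelled partitions it acts on differences of sections by $\pi(g)\bigl(c(x)-c(y)\bigr)=c(gx)-c(gy)$, where $c(x)\in\K^{\P}$ denotes the section $p\mapsto p(x)$. I set $b(g):=c(gx_0)-c(x_0)$, which lies in $F(\P)$ by the defining axiom of the structure. The telescoping identity $c(ghx_0)-c(x_0)=\bigl[c(ghx_0)-c(gx_0)\bigr]+\bigl[c(gx_0)-c(x_0)\bigr]$ then yields the cocycle relation $b(gh)=\pi(g)b(h)+b(g)$. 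I let $B$ be the closed linear span of $\{b(g):g\in G\}$ in $F(\P)$; the cocycle relation gives $\pi(g)b(h)=b(gh)-b(g)\in B$, so $B$ is a closed $\pi$-invariant subspace on which $\pi$ restricts to an isometric representation, and $\alpha(g)v:=\pi(g)v+b(g)$ defines an affine isometric action of $G$ on $B$. Continuity of $\alpha$ reduces to continuity of $g\mapsto b(g)$, inherited from the automorphism action, and properness follows from $\|b(g)\|_{F(\P)}=d_\P(gx_0,x_0)$ together with the hypothesis that $g\mapsto d_\P(gx_0,x_0)$ is proper.

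\smallskip

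\emph{Statement 1.} I write the affine action as $\alpha(g)\xi=\pi(g)\xi+b(g)$ with $\pi$ an isometric linear representation and $b\colon G\to B$ the associated cocycle (taking the origin as base point, so $b(g)=\alpha(g)0$ and $b(e)=0$). On $X:=G$ I index labelled partitions by the closed unit ball of the dual $B^{*}$: for $\phi\in B^{*}$ with $\|\phi\|\le 1$ I set $p_\phi(g):=\phi(b(g))$ and let $\P:=\{p_\phi\}$, so that the section of $g$ is $c(g)\colon p_\phi\mapsto\phi(b(g))$. By Hahn--Banach the unit ball of $B^{*}$ separates points of $B$, so the correspondence $c(g)-c(h)\leftrightarrow b(g)-b(h)$ is well defined and injective; defining $F(\P)$ as the closed linear span of the differences $\{c(g)-c(h)\}$ in $\K^{\P}$, normed by transport of the norm of $B$ along this correspondence, produces a linear isometric embedding $F(\P)\hookrightarrow B$ with image $\overline{\mathrm{span}}\{b(g)-b(h)\}$ and makes $(G,\P,F(\P))$ a space with labelled partitions. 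The left translation action of $G$ on itself is an automorphism: the cocycle identity gives $p_\phi(g^{-1}x)=p_{\pi(g^{-1})^{*}\phi}(x)+\phi(b(g^{-1}))$, so $g$ permutes $\P$ up to an additive constant (the adjoint $\pi(g^{-1})^{*}$ preserving the unit ball of $B^{*}$), and on differences it acts by $c(x)-c(y)\mapsto c(gx)-c(gy)$, which corresponds under the embedding to $\pi(g)\bigl(b(x)-b(y)\bigr)$; as $\pi(g)$ is isometric, the induced map on $F(\P)$ is a linear isometry. Continuity and properness transfer via $d_\P(e,g)=\|b(g)\|_{B}$ for the base point $x_0=e$.

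\smallskip

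I expect the delicate point to lie entirely in statement 1: one must arrange $\P$ and $F(\P)$ so that $F(\P)$ is genuinely a normed space of functions on $\P$ while the transported norm is well defined, and this hinges precisely on the Hahn--Banach separation guaranteeing that the difference functions $c(x)-c(y)$ determine $b(x)-b(y)$ uniquely. A second subtlety, common to both directions, is that labelled partitions are defined only up to an additive constant, so the $G$-action on $\P$ is only defined modulo constants; the point is that this ambiguity cancels on differences of sections, which is exactly why $F(\P)$ must be built from differences rather than from the sections themselves. Once the correct notions of section, difference, and induced isometry are in place, the cocycle identity, the invariance of $B$, and the continuity/properness equivalences in statement 2 are routine.
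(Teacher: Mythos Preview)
Your argument for Statement~2 is essentially the paper's: define $\pi(g)\xi=\xi\circ\Phi_{\tau(g)}$, set $b(g)=c(gx_0,x_0)$, verify the cocycle identity via the telescoping $c(ghx_0,x_0)=c(ghx_0,gx_0)+c(gx_0,x_0)$, and read off properness from $\|b(g)\|=d_\P(gx_0,x_0)$. The only difference is that the paper takes $B=\overline{\Vect(c(x,y):x,y\in X)}$ while you take the closed span of $\{b(g)\}$; yours is a closed $\pi$-invariant subspace of theirs, so either works.

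For Statement~1 your construction is the same pullback-by-$b$ idea the paper uses, but one step does not go through as written. In this paper an automorphism of $(X,\P,F(\P))$ requires $p\circ\tau(g)\in\P$ \emph{literally} for every $p\in\P$; there is no ``up to an additive constant'' built into the definition. With your choice $\P=\{p_\phi:\|\phi\|\le 1\}$ one computes
\[
p_\phi\circ\tau(g)=p_{\pi(g)^{*}\phi}+\phi(b(g)),
\]
which lies outside $\P$ whenever $\phi(b(g))\neq 0$, so left translation by $g$ is \emph{not} an automorphism of your labelled-partition structure. The paper's remedy is to enlarge the family from the start to $\P_\alpha=\{f\circ b + k : f\in B',\ k\in\K\}$, i.e.\ to pull back all continuous \emph{affine} functionals on $B$ rather than just the linear ones; then $\P_\alpha$ is genuinely $G$-stable, while the separation map and the space $F(\P_\alpha)$ are unaffected because the constants cancel in $c(x,y)$. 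With that adjustment your Hahn--Banach identification of $F(\P)$ with $\overline{\mathrm{span}}\,b(G)\subset B$ and the transfer of continuity and properness via $d_\P(g,e)=\|b(g)\|$ go through exactly as you describe, and the argument then coincides with the paper's (which packages the same construction as the pullback by $b$ of the ``natural'' labelled-partition structure on $B$).
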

  This theorem can be rephrased in the particular case of $L^p$ spaces as follows:
 \begin{corr}\label{labpart_aflp}
  Let $p\geq 1$ with $p \notin 2\Z\smallsetminus \{2\}$ and $G$ be a topological group. $G$ has property $PL^p$ if, and only if, $G$ acts properly continuously by automorphisms on a space with labelled partitions $(X,\P,F(\P))$ where $F(\P)$ is isometrically isomorphic to a closed subspace of an $L^p$ space. 
 \end{corr}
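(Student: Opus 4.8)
The plan is to deduce Corollary~\ref{labpart_aflp} directly from Theorem~\ref{labpart_affact} by specializing the Banach space $B$ to be an $L^p$ space, using the definition of property $PL^p$. The corollary is an ``if and only if'' statement, so I would prove the two implications separately, invoking the two parts of the theorem in turn. The only genuine extra ingredient beyond the theorem is keeping careful track of the fact that the class of (closed subspaces of) $L^p$ spaces is preserved under the two constructions appearing in the theorem; this is where the hypothesis $p \notin 2\Z \smallsetminus \{2\}$ must enter.

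For the forward direction, I would assume $G$ has property $PL^p$, so by definition there is a proper continuous isometric affine action of $G$ on some $L^p$ space $B$. Applying part~1 of Theorem~\ref{labpart_affact} yields a structure $(G,\P,F(\P))$ of space with labelled partitions on which $G$ acts properly continuously by automorphisms, together with a linear isometric embedding $F(\P) \hookrightarrow B$. Since $B$ is an $L^p$ space and $F(\P)$ embeds isometrically as a (closed, being complete) linear subspace, $F(\P)$ is isometrically isomorphic to a closed subspace of an $L^p$ space, which is exactly the required conclusion. Note this direction does not need any restriction on $p$.

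For the converse, I would assume $G$ acts properly continuously by automorphisms on a space with labelled partitions $(X,\P,F(\P))$ with $F(\P)$ isometrically isomorphic to a closed subspace of an $L^p$ space. Applying part~2 of Theorem~\ref{labpart_affact} produces a proper continuous isometric affine action of $G$ on a Banach space $B$ which is a closed subspace of $F(\P)$. The task is then to verify that $B$ is itself an $L^p$ space, so that the action witnesses property $PL^p$. Here is precisely where the arithmetic condition $p \notin 2\Z \smallsetminus \{2\}$ is needed: $B$ is a closed subspace of a closed subspace of an $L^p$ space, hence a closed subspace of an $L^p$ space, and one must invoke the classical fact (via the theory of $L^p$ isometries and Banach lattice / subspace characterizations) that for $p$ not an even integer different from $2$, a closed subspace of an $L^p$ space on which the relevant structure lives is again (isometric to) an $L^p$ space. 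I expect this subspace-stability step to be the main obstacle, since it relies on the delicate classification of subspaces of $L^p$ and is exactly what forces the exclusion of $p \in 2\Z \smallsetminus \{2\}$.

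Assembling the two implications completes the proof. In writing it up I would state the forward direction first, emphasize that the restriction on $p$ is only invoked in the converse, and cite the structural result on $L^p$ subspaces (the same one presumably used in the proof of Theorem~\ref{labpart_affact}) rather than reproving it.
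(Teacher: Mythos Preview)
Your overall strategy is correct and matches the paper: use part~1 of Theorem~\ref{labpart_affact} for the forward direction, part~2 for the converse, with the restriction on $p$ entering only in the latter. The forward direction is fine.

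However, there is a genuine gap in your converse. You write that ``the task is then to verify that $B$ is itself an $L^p$ space'' and invoke a ``classical fact \dots\ that for $p$ not an even integer different from $2$, a closed subspace of an $L^p$ space \dots\ is again (isometric to) an $L^p$ space.'' This is false: closed subspaces of $L^p$ are in general \emph{not} $L^p$ spaces (for instance, $\ell^2$ embeds isometrically into $L^p[0,1]$ for $1\le p\le 2$). The restriction $p\notin 2\Z\smallsetminus\{2\}$ does not buy you any subspace-stability of the class of $L^p$ spaces.

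What the restriction actually provides is Hardin's extension theorem (Theorem~\ref{hardin_lp} and its Corollary~\ref{hardin_lp_cor} in the paper): if $G$ has an isometric linear representation on a closed subspace $F$ of an $L^p$ space, then this representation extends $G$-equivariantly to an isometric linear representation on a genuine $L^p$ space containing $F$. The paper's argument runs as follows: from part~2 of Theorem~\ref{labpart_affact} you obtain a proper affine isometric action $\alpha=\pi+b$ on a closed subspace $B$ of $F(\P)$; transporting via the given isometric embedding $T:F(\P)\hookrightarrow L^p(X,\mu)$ yields a proper affine isometric action on the closed subspace $T(B)\subset L^p(X,\mu)$; then Corollary~\ref{hardin_lp_cor} upgrades this to a proper affine isometric action on an honest $L^p$ space. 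So the missing ingredient is not a structural fact about subspaces, but an extension theorem for isometries.
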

A crucial factor in the definition of space with labelled partitions is the ``geometric'' understanding of the construction of Mineyev in \cite{minhyp} used by Yu in \cite{yuhyp} to exhibit a proper action of discrete hyperbolic groups on some $\ell^p$ space. Moreover, another inspiration for this definition comes from \cite{checowjoljulval} Proposition 7.4.1 where Valette states the following geometric characterisation of the Haagerup property for locally compact groups : $G$ has the Haagerup property if, and only if, there exists a metric space $(X,d)$ on which $G$ acts isometrically and metrically properly, a unitary representation $\pi$ of $G$ on a Hilbert space $\H_{\pi}$, and a continuous map $c: X \times X \rightarrow \H_{\pi}$ such that :
\begin{itemize}
 \item[1.] Chasles' relation :
 \begin{center}for all $x,y,z \in X$, $c(x,z)=c(x,y)+c(y,z)$;\end{center}
 \item[2.] $G$-equivariance condition :
 \begin{center}for all $x,y \in X$, $g\in G$, $c(gx,gy)=\pi(g)c(x,y)$;\end{center}
 \item[3.] Properness condition :
 \begin{center}if $d(x,y) \rightarrow +\infty$, then $\|c(x,y)\|_{\H_{\pi}}\rightarrow +\infty$.\end{center}
\end{itemize}
To emphasize the connection with this result, we use the same notation $c$ (for \emph{c}ocycle) for the separation map $c:X\times X \rightarrow F(\P)$ associated with a set of labelling functions $\P$ (see Definition \ref{labpart_sepmap}). In fact, an immediate consequence of Theorem \ref{labpart_affact} Statement 1. is that the separation map $c_{\a}$ of the set of labelled partitions associated with a proper isometric affine action $\a$ on a Banach space (see Definition \ref{labpart_alpha_def}), satisfies the conditions 1., 2., and 3. mentionned above. \\

We describe, in Part \ref{subsubsec_defactlab}, the maps that preserve the structure of spaces with labelled partitions in order to define actions by automorphisms on a space with labelled partitions. This notion of homomorphisms of spaces with labelled partitions generalizes the notion of homomorphisms of spaces with measured walls (see \cite{chadruhag} Definition 3.5). \\
We discuss some constructions of spaces with labelled partitions for the direct sum, semi-direct product, wreath product and amalgamated free product in Sections \ref{sec_dirsum}, \ref{sec_wreath} and \ref{freepartsec}. We apply these constructions in the case of groups with property $PL^p$ and we obtain the following stability properties :

\begin{theor}\label{const_semidir_wreathflp}
  Let $H,G$ be countable discrete groups, $L$ be a subgroup of $G$ and $p > 1$, with $p \notin 2\Z\smallsetminus \{2\}$. We denote by $I$ the quotient $G/L$ and $W=\bigoplus_{I}H$. Assume that $G$ is Haagerup, $L$ is co-Haagerup in $G$ and $H$ has property $PL^p$. \\
  Then the permutational wreath product $H\wr_I G=W\rtimes G$ has property $PL^p$.
 \end{theor}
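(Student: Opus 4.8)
The plan is to construct, for the group $\G := H\wr_I G = W\rtimes G$, an explicit structure of space with labelled partitions on which $\G$ acts properly by automorphisms and whose function space $F(\P)$ embeds isometrically into an $L^p$ space; property $PL^p$ then follows at once from Corollary~\ref{labpart_aflp}. I would assemble this structure from three ingredients coming from the three hypotheses, using the direct sum and wreath product constructions of Sections~\ref{sec_dirsum} and~\ref{sec_wreath}.

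First I would translate the hypotheses into the language of labelled partitions. As $H$ has property $PL^p$, Corollary~\ref{labpart_aflp} gives a space with labelled partitions $(H,\P_H,F(\P_H))$ on which $H$ acts properly by automorphisms via left translation, with $F(\P_H)$ a closed subspace of an $L^p$ space and a proper separation map $c_H$. As $G$ is Haagerup, it acts properly by automorphisms on a space with measured walls; since a measured wall structure is a special case of labelled partitions whose separation map satisfies $\|c(x,y)\|_{L^p}^p=\mu(\{\text{walls separating }x,y\})$ for \emph{every} $p\ge 1$ (see Definitions~\ref{labpart_meswalls} and~\ref{labpart_sepmap}), this already produces a structure $(G,\P_G,F(\P_G))$ with proper $G$-action and $F(\P_G)\hookrightarrow L^p$ — note that property $PL^p$ for $G$ itself is \emph{not} assumed nor needed, Haagerup suffices. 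Finally, the co-Haagerup hypothesis on $L$ provides a proper $G$-\emph{invariant} measured wall structure on $I=G/L$, hence a labelled partition structure $(I,\P_I,F(\P_I))$ with $F(\P_I)\hookrightarrow L^p$ whose associated separation pseudo-metric on $I$ is proper.

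Next comes the construction. On the fibre $W=\bigoplus_I H$ I would place, for each coordinate $i\in I$, the pullback of $\P_H$ along the evaluation $f\mapsto f(i)$; the direct sum construction of Section~\ref{sec_dirsum} organizes these into a labelled partition structure on $W$ whose function space is the $\ell^p$-sum $\bigoplus^p_{i\in I}F(\P_H)$, again a closed subspace of an $L^p$ space, on which $W$ acts properly. The shift action of $G$ on $I$ permutes these coordinate structures and so acts by automorphisms; combining with $\P_G$ pulled back along $\G\to G$ and with the $G$-invariant structure $\P_I$, the wreath product construction of Section~\ref{sec_wreath} yields a structure $(\G,\P,F(\P))$ on which $\G$ acts by automorphisms, with $F(\P)$ an $\ell^p$-direct sum of copies of $F(\P_G)$, $F(\P_H)$ and $F(\P_I)$ — hence, since an $\ell^p$-sum of closed subspaces of $L^p$ spaces is again a closed subspace of an $L^p$ space, $F(\P)$ embeds isometrically into an $L^p$ space. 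Heuristically the separation map splits into a base part measuring $c_G(g)$ and a fibre part of the shape $\big(\sum_{i\in I}\|c_H(f(i))\|^p+(\text{support term from }\P_I)\big)^{1/p}$.

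The delicate point — and the step where the co-Haagerup hypothesis is indispensable — is properness of the $\G$-action, i.e. condition 3 of the separation map. Since $I$ is infinite, charging only the $H$-values coordinatewise does \emph{not} give a proper action: the set of finitely supported configurations with bounded $\sum_{i}\|c_H(f(i))\|^p$ is infinite, because a fixed nonzero value may sit at any of infinitely many coordinates. The role of the proper $G$-invariant structure $\P_I$ is precisely to additionally charge, in a $G$-equivariant way, for the location in $I$ of the support of $f$, so that spreading the support out forces the fibre part of the separation to tend to infinity. I would then check that a sequence $(f_n,g_n)\to\infty$ in $\G$ must escape to infinity either in $g_n$, or in some coordinate value $f_n(i)$, or in the spread of $\supp(f_n)$ in $I$, each detected by one of the three structures, yielding properness. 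This is exactly the mechanism of \cite{wreath}, and the hypothesis cannot be dropped: when $(G,L)$ has relative property (T) no such proper action exists (cf. \cite{chiioa}). With $(\G,\P,F(\P))$ so constructed, Corollary~\ref{labpart_aflp} gives that $\G=H\wr_I G$ has property $PL^p$.
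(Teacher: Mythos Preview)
Your outline is correct in spirit and matches the paper's strategy: assemble a labelled-partitions structure from three ingredients (the $PL^p$ structure on $H$, the Haagerup structure on $G$, and something coming from co-Haagerup), check that the resulting function space sits inside an $L^p$, and invoke Corollary~\ref{labpart_aflp}. The paper proceeds exactly this way, via Lemma~\ref{const_semidir_wreathflp_lem} and Proposition~\ref{const_semidir_labpart}.

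There is, however, one genuine imprecision in your sketch. You extract from the co-Haagerup hypothesis a labelled-partition structure $(I,\P_I,F(\P_I))$ on the index set $I$ itself, and then say it will ``charge, in a $G$-equivariant way, for the location in $I$ of the support of $f$''. But a structure on $I$ does not by itself measure finite \emph{subsets} of $I$; the support of $f\in W$ is a subset, not a point, and the naive weighting idea (the $\phi$-weighted construction of Lemma~\ref{propdirsum_lem}) is \emph{not} $G$-equivariant once $G$ permutes the coordinates. Passing from a $G$-invariant wall structure on $I$ to a $W\rtimes G$-equivariant structure in which $d((w,g)x_0,x_0)\to\infty$ as $\supp(w)\to\infty$ is precisely the non-trivial content of the Cornulier--Stalder--Valette construction. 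The paper does not redo this: it imports it as a black box (Theorem~\ref{const_semidir_theoCSV}), and the resulting measured-walls structure lives on $W\times I$, not on $I$. So when you write ``the wreath product construction of Section~\ref{sec_wreath}'', be aware that this section does not contain an independent construction --- it \emph{is} the proof of the theorem you are proving, and its only new input beyond Sections~\ref{sec_dirsum} and~\ref{subsec_semidir} is the citation of \cite{wreath}. Your proof should make that dependence explicit rather than suggest that a structure on $I$ alone suffices.
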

 
 \begin{theor}\label{amalgamflp}
  Let $G,H$ be countable discrete groups, $F$ be a finite group such that $F \hookrightarrow G$ and $F \hookrightarrow H$ and $p > 1$ with $p \notin 2\Z \smallsetminus \{2\}$. Then $G,H$ have property $PL^p$ if, and only if, $G*_F H$ has property $PL^p$.
 \end{theor}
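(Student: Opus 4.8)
The plan is to work entirely through the dictionary of Corollary~\ref{labpart_aflp}: since $p>1$ and $p\notin 2\Z\smallsetminus\{2\}$, a countable discrete group has property $PL^p$ if and only if it acts properly by automorphisms on a space with labelled partitions $(X,\P,F(\P))$ with $F(\P)$ a closed subspace of an $L^p$ space. One implication is soft. Because $F$ embeds in both factors, $G$ and $H$ embed as subgroups of $\Gamma:=G*_F H$; restricting a proper automorphism action of $\Gamma$ on $(X,\P,F(\P))$ to either factor still gives a proper action on the same space (for discrete groups, a subgroup leaves every finite set when it does in the ambient group), so by Corollary~\ref{labpart_aflp} both $G$ and $H$ have property $PL^p$ as soon as $\Gamma$ does. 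The whole content is thus the converse: if $G$ and $H$ have property $PL^p$, then so does $\Gamma$.

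For that direction I would let $\Gamma$ act on its Bass--Serre tree $T$, whose vertex set is $\Gamma/G\sqcup\Gamma/H$ and whose edge set is $\Gamma/F$, an edge $\gamma F$ joining $\gamma G$ to $\gamma H$; the action is by left translation with edge stabilizers the finite conjugates of $F$. I would then assemble a labelled-partition structure on $\Gamma$ from two ingredients, combined through the direct-sum construction of Section~\ref{sec_dirsum}. The first ingredient is carried by $T$ itself: each edge separates the tree into two half-trees, which I use as a labelled partition, so that $F(\P_T)$ is an $\ell^p$-space on the edge set and the separation map satisfies $\|c_T(u,v)\|_p=d_T(u,v)^{1/p}$, proper in the tree direction. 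The second ingredient comes from the factors: fix proper structures $(X_G,\P_G,F(\P_G))$ and $(X_H,\P_H,F(\P_H))$ from Corollary~\ref{labpart_aflp} and, crucially using that $F$ is \emph{finite}, average over $F$ to replace them by $F$-invariant structures, so the data attached to the two endpoints of a given edge agree on the common edge group.

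I would then spread these vertex structures over $T$: over each $G$-type vertex place a translated copy of $F(\P_G)$ and over each $H$-type vertex a translated copy of $F(\P_H)$, and take the $\ell^p$-direct sum over all vertices, with $\Gamma$ permuting the fibers according to the tree action and acting inside each fiber by the relevant vertex-group action. The $F$-invariance arranged above makes this action well defined and by automorphisms. Combining this vertex bundle with $F(\P_T)$ through the direct-sum machinery of Section~\ref{sec_dirsum} yields a space with labelled partitions $(\Gamma,\P,F(\P))$ on which $\Gamma$ acts by automorphisms, and $F(\P)$ is an $\ell^p$-direct sum of $L^p$- and $\ell^p$-spaces, hence isometrically a closed subspace of an $L^p$ space. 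By Corollary~\ref{labpart_aflp} it then remains only to check that this action is proper.

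That properness is the main obstacle, being precisely where the two ingredients must be balanced against the word metric of $\Gamma$. The strategy is to use the Bass--Serre normal form: writing $\gamma=s_1\cdots s_n$ as a reduced alternating product of coset representatives, the syllable length $n$ is controlled by the tree distance $d_T(o,\gamma o)$, detected by $c_T$, while the sizes of the individual syllables $s_i$ inside $G$ or $H$ are detected by the vertex terms along the geodesic from the base vertex to $\gamma o$. I would show that $\|c(\gamma)\|_p^p$ dominates the sum of these two contributions and that this sum tends to infinity whenever the word length $|\gamma|\to\infty$. The delicate points are to verify that the $F$-averaging has not destroyed the properness of the vertex structures on $G$ and $H$, and that the fiberwise contributions accumulated along the tree geodesic are genuinely bounded below in terms of the vertex-group lengths of the syllables. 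Once this is done, Corollary~\ref{labpart_aflp} gives property $PL^p$ for $\Gamma=G*_F H$ and closes the equivalence.
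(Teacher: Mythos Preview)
Your plan is correct and matches the paper's proof: the easy direction via subgroups, then for the converse the Bass--Serre tree, the tree-wall $\ell^p$ structure, $F$-averaging on the vertex groups to pass to $G/F$ and $H/F$ (this is the paper's Lemma~\ref{finite_labpart}, and your ``delicate point'' about averaging not destroying properness is exactly its content), the vertex structures spread over the tree and summed in $\ell^p$, and properness read off from the normal form via the explicit formula of Theorem~\ref{free_act_theo}. The only adjustment is that the ``spreading over vertices'' is not carried out with the plain direct-sum machinery of Section~\ref{sec_dirsum} but with the tree-of-spaces formalism of Section~\ref{free_tree_sec}: the labelling functions on the total space are pulled back through the projections $\pi_v:X\to X_v$ of Definition~\ref{free_proj}, which is precisely what makes $\|c_X(\gamma F,F)\|^p$ decompose as the sum of the vertex contributions along the geodesic from $G$ to $\gamma G$ plus $d_T(\gamma G,G)$, the computation you were aiming for.
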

 
 We mention that, in his thesis, Pillon obtains by other methods the stability of property $PL^p$ by amalgamated free product over finite subgroups and he also computes a lower bound for the equivariant $L^p$-compression of such a product in terms of the equivariant $L^p$-compression of the factors. More precisely, he exhibits an explicit proper cocycle for a representation of the product built from the induced representations of the factors.
 
 \newpage

 \tableofcontents
 
 \newpage

Subsequently, all topological groups we consider are assumed to be Hausdorff.

\section{Preliminaries}\label{sec_prelim}

\subsection{Metrically proper actions}\label{sub_prelim_proper}

A pseudo-metric $d$ on a set $X$ is a symmetric map $d: X\times X \rightarrow \R_+$ which satisfies the triangle inequality and $d(x,x)=0$. But unlike a metric, a pseudo-metric need not separate points. 

\begin{df}
 Let $G$ be a topological group acting continuously isometrically on a pseudo-metric space $(X,d_X)$. The $G$-action on $X$ is said \emph{metrically proper} if, for all (or equivalently, for some) $x_0 \in X$ : 
 $$\displaystyle \lim_{g \rightarrow \infty}d_X(g.x_0,x_0)= +\infty ,$$
 i.e., in other words, for all $R \geq 0$, the set $\{ g \in G \st d_X(g.x_0,x_0) \leq R \}$ is relatively compact in $G$.
\end{df}

Let $X$ be a set endowed with a pseudo-metric $d$. We put on $X$ the following equivalence relation: for $x,x' \in X$, $x\sim x'$ if, and only if, $d(x,x')=0$, and we denote by $Y$ the quotient set $X/ \sim$. Then we can define a metric $\tilde{d}$ on $Y$ by setting, for $x,x' \in X$, $\tilde{d}([x],[x'])=d(x,x')$. Moreover, an isometric group action $(X,d)$ preserves the classes of $\sim$ and then induces an isometric action on $(Y,\tilde{d})$.

\begin{lem}
  Let $G$ be a topological group acting continuously isometrically on a pseudo-metric space $(X,d)$. The $G$-action on $X$ is \emph{metrically proper} if, and only if, the induced $G$-action on the quotient metric space $(Y,\tilde{d})$ is metrically proper.
\end{lem}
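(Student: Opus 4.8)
The plan is to exploit the fact that the quotient map $\pi:X\to Y$, $x\mapsto[x]$, is an isometry in the sense that $\tilde d([x],[x'])=d(x,x')$ by the very definition of $\tilde d$. The key observation is that the $G$-action passes to the quotient compatibly with $\pi$, i.e. $g.[x]=[g.x]$; this is exactly the content of the sentence preceding the lemma, so I may take the induced action on $(Y,\tilde d)$ as given, together with the fact that it is isometric. Everything then reduces to comparing the two displacement functions $g\mapsto d(g.x_0,x_0)$ on $X$ and $g\mapsto \tilde d(g.[x_0],[x_0])$ on $Y$ for a fixed basepoint $x_0$.

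First I would fix $x_0\in X$ and set $y_0=[x_0]\in Y$. Then for every $g\in G$ I would compute, directly from the definition of $\tilde d$ and the equivariance $g.[x_0]=[g.x_0]$,
\[
\tilde d(g.y_0,y_0)=\tilde d([g.x_0],[x_0])=d(g.x_0,x_0).
\]
Thus the two displacement functions are literally equal, not merely comparable. Consequently, for every $R\geq 0$ the sets
\[
\{g\in G \st d(g.x_0,x_0)\leq R\}
\quad\text{and}\quad
\{g\in G \st \tilde d(g.y_0,y_0)\leq R\}
\]
coincide. Hence one set is relatively compact in $G$ for all $R$ if and only if the other is, which by the characterisation of metric properness recalled just after the definition gives the equivalence of the two properness statements at the basepoints $x_0$ and $y_0$.

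Finally, I would invoke the ``for all (or equivalently, for some)'' clause of the definition of metric properness to conclude: since properness can be tested at a single basepoint, and since $x_0$ was arbitrary and $y_0=[x_0]$ ranges over all of $Y$ as $x_0$ ranges over $X$, the $G$-action on $(X,d)$ is metrically proper if and only if the induced action on $(Y,\tilde d)$ is. I would keep the argument short and note explicitly that the equivalence of the ``for all'' and ``for some'' formulations rests on the triangle inequality, which controls how $d(g.x_0,x_0)$ changes when the basepoint is moved and is the only substantive ingredient behind that clause.

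I do not expect any genuine obstacle here: the statement is essentially a bookkeeping lemma, and the only point requiring the slightest care is making sure the induced action is well defined and isometric on $Y$, which has already been arranged in the paragraph preceding the statement. The heart of the proof is the one-line identity $\tilde d(g.y_0,y_0)=d(g.x_0,x_0)$, after which relative compactness of the sublevel sets transfers verbatim between the two spaces.
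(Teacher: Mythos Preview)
Your proof is correct. The paper states this lemma without proof, treating it as routine; your argument supplies exactly the natural justification, namely that the displacement functions $g\mapsto d(g.x_0,x_0)$ and $g\mapsto \tilde d(g.[x_0],[x_0])$ coincide by the definition of $\tilde d$ and the equivariance of the quotient map, so the sublevel sets are literally the same.
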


\subsection{Isometric affine actions}\label{sub_prelim_affact}

\begin{df}
 We say that the action of a topological group $G$ on a topological space $X$ is \emph{strongly continuous} if, for all $x \in X$, the orbit map from $G$ to $X$, $g \mapsto gx$ is continuous.
\end{df}

Let $G$ be a topological group and let $(B,\|.\|)$ be a Banach space on $\K=\R$ or $\C$.

\begin{df}
 A \emph{continuous isometric affine} action $\a$ of $G$ on $B$ is a strongly continuous morphism 
 $$\a: G \longrightarrow \text{Isom}(B)\cap \text{Aff}(B).$$
\end{df}
Notice that if $B$ is a real Banach space, then, by Mazur-Ulam Theorem, $$\text{Isom}(B)\cap \text{Aff}(B)=\text{Isom}(B).$$

\begin{prop}
 A continuous isometric affine action $\a$ of $G$ on $B$ is characterised by a pair $(\pi,b)$ where: 
 \begin{itemize}
  \item $\pi$ is a strongly continuous isometric representation of $G$ on $B$,
  \item $b:G\rightarrow B$ is a continuous map satisfying the 1-cocycle relation: for $g,h \in G$, $$ b(gh)=\pi(g)b(h)+b(g). $$
 \end{itemize}
 And we have, for $g \in G$, $x \in B$:
 $$ \a(g)x=\pi(g)x+b(g). $$
\end{prop}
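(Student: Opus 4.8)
The plan is to extract the pair $(\pi,b)$ from the affine action by separating the linear and translation parts of each $\a(g)$, and then to read off the cocycle relation and the representation property from the fact that $\a$ is a group homomorphism. First I would set $b(g):=\a(g)(0)$ and define $\pi(g)x:=\a(g)x-b(g)$ for $x\in B$. Since $\a(g)$ is affine by hypothesis, $\pi(g)$ is precisely its linear part and is therefore linear; moreover $\pi(g)(0)=\a(g)(0)-b(g)=0$, so using that $\a(g)$ is an isometry we get, for every $x\in B$,
\[
\|\pi(g)x\|=\|\a(g)x-\a(g)(0)\|=\|x-0\|=\|x\|,
\]
which shows that $\pi(g)$ is a linear isometry of $B$.

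Next I would exploit the homomorphism property $\a(gh)=\a(g)\circ\a(h)$. Expanding both sides in terms of $\pi$ and $b$, and using linearity of $\pi(g)$, gives for all $x\in B$,
\[
\pi(gh)x+b(gh)=\pi(g)\pi(h)x+\pi(g)b(h)+b(g).
\]
Evaluating at $x=0$ yields the $1$-cocycle relation $b(gh)=\pi(g)b(h)+b(g)$, and subtracting this identity from the displayed equation leaves $\pi(gh)x=\pi(g)\pi(h)x$ for every $x$, i.e. $\pi$ is multiplicative. Taking $g=h=e$ shows $b(e)=0$ and $\pi(e)=\mathrm{id}_B$, whence $\pi(g)\pi(g^{-1})=\pi(g^{-1})\pi(g)=\mathrm{id}_B$; thus each $\pi(g)$ is invertible, and $\pi$ is an isometric representation of $G$ on $B$.

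Finally I would address continuity and the converse. Strong continuity of $\a$ means $g\mapsto \a(g)x$ is continuous for each fixed $x$; applied at $x=0$ this shows $b$ is continuous, and then $\pi(g)x=\a(g)x-b(g)$ is continuous in $g$ for each fixed $x$, so $\pi$ is strongly continuous. The reconstruction formula $\a(g)x=\pi(g)x+b(g)$ holds by the very definition of $\pi$ and $b$, and it also gives the converse direction: any pair $(\pi,b)$ as in the statement defines affine isometries by this formula, the cocycle relation being exactly what makes $g\mapsto\a(g)$ a homomorphism.

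I expect no serious obstacle, since affineness of each $\a(g)$ is assumed (so Mazur--Ulam is not needed here). The only point requiring a little care is the uniqueness of the decomposition of an affine map into its linear part and its translation part, which is what licenses splitting the single identity above into the two separate statements about $\pi$ and $b$; this uniqueness is obtained simply by evaluating at $0$, as done above.
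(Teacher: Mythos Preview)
Your proof is correct and is the standard argument. The paper does not actually prove this proposition: it is stated without proof in the preliminaries section as a well-known characterisation of affine isometric actions, so there is nothing to compare against beyond noting that your argument supplies exactly the routine verification the paper omits.
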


\begin{df}
 Let $\a$ be a continuous isometric affine action of $G$ on $B$. We say that $\a$ is \emph{proper} if the action of $G$ on the metric space $(B,d_{\|.\|})$ is metrically proper where $d_{\|.\|}$ is the canonical metric on $B$ induced by the norm $\|.\|$.
\end{df}

\begin{prop}
 A continuous isometric affine action $\a$ of $G$ on $B$ is proper if, and only if $$\|b(g)\|\underset{g \rightarrow \infty}{\longrightarrow}+\infty.$$
\end{prop}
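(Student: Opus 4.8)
The plan is to reduce the statement to the displacement of the action at a single, conveniently chosen base point, exploiting the freedom of base point already built into the definition of a metrically proper action. By definition, $\a$ is proper precisely when the isometric $G$-action on the metric space $(B, d_{\|.\|})$ is metrically proper, that is, when $\lim_{g \to \infty} d_{\|.\|}(\a(g)x_0, x_0) = +\infty$ for one (equivalently, for every) $x_0 \in B$.

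First I would take the base point $x_0 = 0 \in B$. Using the description $\a(g)x = \pi(g)x + b(g)$ from the preceding proposition, together with $\pi(g)0 = 0$ (since each $\pi(g)$ is linear), one computes directly that $\a(g)\cdot 0 = b(g)$, whence $d_{\|.\|}(\a(g)\cdot 0, 0) = \|\a(g)\cdot 0\| = \|b(g)\|$. Thus the displacement of the affine action at the origin is exactly the norm of the cocycle evaluated at $g$. Combining this with the characterisation of properness recalled above yields the claim at once: $\a$ is proper if, and only if, $\lim_{g \to \infty} d_{\|.\|}(\a(g)\cdot 0, 0) = +\infty$, which holds if, and only if, $\|b(g)\| \to +\infty$ as $g \to \infty$.

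There is no genuine obstacle here, the statement being essentially a reformulation of the definitions. The only point deserving care is the legitimacy of testing metric properness at the single base point $0$, i.e. the ``for all, or equivalently for some'' clause in the definition of metrically proper actions. This holds because each $\a(g)$ is an isometry: for any $x_0, x_1 \in B$, the triangle inequality together with $d_{\|.\|}(\a(g)x_0, \a(g)x_1) = d_{\|.\|}(x_0, x_1)$ gives $|d_{\|.\|}(\a(g)x_0, x_0) - d_{\|.\|}(\a(g)x_1, x_1)| \leq 2\, d_{\|.\|}(x_0, x_1)$, so the displacement function diverges at $x_0$ if, and only if, it diverges at $x_1$. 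Hence testing at $0$ loses no generality, and the equivalence follows.
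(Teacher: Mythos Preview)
Your argument is correct. The paper itself states this proposition in the preliminaries without proof, treating it as a standard consequence of the definitions; your write-up supplies exactly the expected justification, namely evaluating the displacement at the base point $0$ and invoking the ``for some, equivalently for all'' clause in the definition of a metrically proper action.
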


\begin{df}
 Let $p \geq 1$. We say that $G$ has property $PL^p$ (or is \emph{a-$FL^p$-menable}) if there exists a proper continuous isometric affine action of $G$ on a $L^p$ space.
\end{df}

\subsection{On isometries of $L^p$-spaces}\label{sub_prelim_lp}

In general, for $p\geq 1$, a closed subspace of a $L^p$-space is not a $L^p$-space (exempt the special case $p=2$); but, in \cite{hardin}, Hardin showed the following result about extension of linear isometries on closed subspace of a $L^p$ (here, we give a reformulation of this result coming from \cite{badfurgelmon}, Corollary 2.20):

\begin{theo}\label{hardin_lp}
 Let $p > 1$ with $p \notin 2\Z\smallsetminus \{2\}$ and $F$ be a closed subspace of $L^p(X,\mu)$. Let $\pi$ be a linear isometric representation of a group $G$ on $F$. Then there is a linear isometric representation $\a'$ of $G$ on some other space $L^p(X',\mu')$ and a linear $G$-equivariant isometric embedding $F \hookrightarrow L^p(X',\mu')$.
\end{theo}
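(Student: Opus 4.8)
The plan is to derive this reformulation from Hardin's original extension theorem together with the uniqueness of the extension it provides. The exponent restriction $p \notin 2\Z \smallsetminus \{2\}$ splits into two regimes. When $p=2$ the space $L^2$ is a Hilbert space and the statement is classical: writing $L^2(X,\mu) = F \oplus F^{\perp}$, one simply sets $\a'(g) = \pi(g) \oplus \mathrm{id}_{F^{\perp}}$, which is a unitary representation of $G$ on $L^2(X,\mu)$ restricting to $\pi$ on $F$, so the inclusion $F \hookrightarrow L^2(X,\mu)$ is the required equivariant embedding. The substance of the theorem therefore lies in the case where $p$ is not an even integer, which is exactly the range covered by Hardin's theorem: there, every linear isometry between closed subspaces of $L^p$ spaces extends \emph{uniquely} to a linear isometry between the smallest $L^p$-sublattices (the $L^p$ spaces of the generated sub-$\sigma$-algebras) containing them.

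First I would fix the canonical ambient space: let $L^p(X',\mu')$ be the smallest $L^p$-sublattice of $L^p(X,\mu)$ containing $F$, as furnished by Hardin's construction, and let $\iota: F \hookrightarrow L^p(X',\mu')$ denote the inclusion. This $\iota$ is a linear isometric embedding and will be the embedding asserted in the statement; it remains to construct the representation $\a'$ on $L^p(X',\mu')$ and to verify that $\iota$ intertwines $\pi$ and $\a'$.

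For each $g \in G$ the operator $\pi(g) : F \to F$ is a surjective linear isometry, surjective because $\pi$ is a representation so that $\pi(g)^{-1} = \pi(g^{-1})$; in particular the smallest $L^p$-sublattice generated by $\pi(g)F = F$ is again $L^p(X',\mu')$. Hardin's theorem then extends $\pi(g)$ to a unique linear isometry $\a'(g)$ of $L^p(X',\mu')$ onto itself satisfying $\a'(g)|_F = \pi(g)$. The latter relation already expresses the $G$-equivariance of $\iota$.

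The key remaining step is to promote $g \mapsto \a'(g)$ to a group homomorphism, and here the uniqueness clause does all the work. For $g,h \in G$, both $\a'(gh)$ and $\a'(g)\a'(h)$ are linear isometries of $L^p(X',\mu')$ whose restrictions to $F$ equal $\pi(gh) = \pi(g)\pi(h)$; being two extensions of the same isometry on $F$, they coincide by uniqueness, so $\a'(gh) = \a'(g)\a'(h)$, and similarly $\a'(e)$ extends $\mathrm{id}_F$ and hence equals the identity. Thus $\a'$ is a linear isometric representation of $G$ on $L^p(X',\mu')$ making $\iota$ a $G$-equivariant isometric embedding. I expect the only genuine obstacle to be the input from Hardin's theorem itself — in particular the \emph{uniqueness} of the extension, a rigidity phenomenon special to non-even exponents — whereas the passage from a single extended isometry to a full equivariant representation is a purely formal consequence of that uniqueness.
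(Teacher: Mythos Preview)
The paper does not actually prove this theorem: it is quoted as a reformulation of Hardin's result taken from \cite{badfurgelmon}, Corollary 2.20, and no argument is given in the paper itself. Your proposal supplies precisely the standard derivation --- extend each $\pi(g)$ using Hardin's extension theorem and deduce the homomorphism property from the \emph{uniqueness} of the extension --- which is the argument behind the cited corollary in \cite{badfurgelmon}. So your proof is correct and coincides with the intended (externally cited) approach; there is nothing to compare against within the paper.
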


An immediate consequence is the following :

\begin{cor}\label{hardin_lp_cor}
 Let $p > 1$ with $p \notin 2\Z\smallsetminus \{2\}$, $F$ be a closed subspace of a $L^p$-space and $G$ be a topological group. If $G$ acts properly by affine isometries on $F$, then $G$ has property $PL^p$.
\end{cor}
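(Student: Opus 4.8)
The plan is to unpack the given affine action into its linear part and cocycle, transport the cocycle through Hardin's equivariant embedding, and reassemble an affine action on the ambient $L^p$ space. First I would use the characterisation of isometric affine actions to write the given proper continuous isometric affine action of $G$ on $F$ as a pair $(\pi,b)$, where $\pi$ is a strongly continuous linear isometric representation of $G$ on $F$ and $b\colon G\to F$ is a continuous $1$-cocycle, i.e. $b(gh)=\pi(g)b(h)+b(g)$; by the properness criterion, properness of the action means $\|b(g)\|\to+\infty$ as $g\to\infty$.

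Since $F$ is a closed subspace of some $L^p(X,\mu)$ and $\pi$ is a linear isometric representation of $G$ on $F$, I would apply Theorem \ref{hardin_lp}. This produces a linear isometric representation $\a'$ of $G$ on some $L^p(X',\mu')$ together with a linear $G$-equivariant isometric embedding $j\colon F\hookrightarrow L^p(X',\mu')$, so that $j\circ\pi(g)=\a'(g)\circ j$ for all $g\in G$. I would then set $b':=j\circ b\colon G\to L^p(X',\mu')$ and define $\tilde{\a}(g)x:=\a'(g)x+b'(g)$. The $1$-cocycle relation for $b'$ is immediate from equivariance, since $b'(gh)=j\bigl(\pi(g)b(h)+b(g)\bigr)=\a'(g)b'(h)+b'(g)$, so $\tilde{\a}$ is an isometric affine action of $G$ on $L^p(X',\mu')$ with linear part $\a'$ and cocycle $b'$. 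Properness transfers verbatim because $j$ is isometric: $\|b'(g)\|=\|b(g)\|\to+\infty$ as $g\to\infty$.

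The step I expect to be the main obstacle is continuity, which is what must still be checked for $\tilde{\a}$ to witness property $PL^p$. Continuity of $b=j\circ b$ is clear, as $b$ is continuous and $j$ is a bounded linear map, so the orbit map at $0$ is continuous; strong continuity of $\tilde{\a}$ thus reduces to strong continuity of the linear part $\a'$ on all of $L^p(X',\mu')$. The cocycle relation alone only yields strong continuity of $\a'$ on the closed linear span of $\{b'(g)\st g\in G\}$, where $\a'(g)b'(h)=b'(gh)-b'(g)$ is continuous in $g$ for fixed $h$, and one upgrades to the closure using that the operators $\a'(g)$ are isometries, hence equicontinuous. To obtain strong continuity on the full space I would rely on the fact that the extension furnished by Theorem \ref{hardin_lp} is compatible with the group topology, i.e. that the construction of \cite{hardin} (in the form of \cite{badfurgelmon}, Corollary 2.20) sends a strongly continuous representation to a strongly continuous one, so that $\a'$ is strongly continuous. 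Granting this, $\tilde{\a}$ is a proper continuous isometric affine action of $G$ on the $L^p$ space $L^p(X',\mu')$, which is exactly property $PL^p$.
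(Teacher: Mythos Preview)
Your argument is correct and is exactly the intended one: the paper gives no proof beyond calling the corollary ``an immediate consequence'' of Theorem \ref{hardin_lp}, and the standard way to read this is precisely your construction---apply Hardin's extension to the linear part $\pi$, push the cocycle forward through the equivariant isometric embedding $j$, and check that properness is preserved since $\|j(b(g))\|=\|b(g)\|$.

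Your caution about strong continuity of the extended representation $\a'$ is a legitimate technical point that the paper glosses over. The resolution you indicate is the right one: the form of Hardin's theorem quoted from \cite{badfurgelmon}, Corollary 2.20, does preserve strong continuity of the representation (the extension there is built functorially from $\pi$ via a change of measure and a multiplicative cocycle, and one checks directly that strong continuity survives). Alternatively, as you observe, strong continuity of $\a'$ on $\overline{\mathrm{span}}\,b'(G)$ follows from the cocycle identity $\a'(g)b'(h)=b'(gh)-b'(g)$ and equicontinuity of the isometries $\a'(g)$; this subspace is $\a'$-invariant and contains the orbit of $0$, so the affine action restricted to it is already continuous and proper---but since this subspace need not be an $L^p$ space, one must indeed invoke the continuity of the full extension to conclude. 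Either way, the proof goes through.
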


In Section \ref{sec_dirsum}, we embed linearly isometrically into $L^p$ spaces some normed vector spaces isometrically isomorphic to a direct sums of $L^p$ spaces thanks to the following basic result:

\begin{df}
 Let $I$ be a countable index set, $(B_i,\|.\|_{_{B_i}})_{i \in I}$ be a family of Banach spaces and $p \geq 1$.
 We call \emph{$\ell^p$-direct sum} of the family $(B_i)$ the space:
 $$ B=\rexp{p}{\bigoplus_{i \in I}} B_i:=\left \lbrace (x_i)_{i \in I} \in \prod_{i \in I}B_i \st \sum_{i \in I}\|x_i\|_{_{B_i}}^p  < +\infty \right \rbrace, $$
 and we denote, for $x=(x_i) \in B$,
 $$ \|x\|_p:=\left(\sum_{i \in I}\|x_i\|_{_{B_i}}^p\right)^{\frac{1}{p}}.$$
\end{df}

The space $B=\rexp{p}{\bigoplus_{i \in I}} B_i$ endowed with the norm $\|.\|_p$ is a Banach space, and moreover, we have:

\begin{prop}\label{lpisomisom}
 Let $I$ be a countable index set, $p \geq 1$ and $(L^p(X_i,\mu_i))_{i \in I}$ be a family of $L^p$-spaces.
 Then $\displaystyle \left(\rexp{p}{\bigoplus_{i \in I}} L^p(X_i,\mu_i),\|.\|_p\right)$ is isometrically isomorphic to a $L^p$-space.
\end{prop}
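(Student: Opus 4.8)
The plan is to realize the $\ell^p$-direct sum as the $L^p$-space of a single measure built by gluing the spaces $(X_i,\mu_i)$ into their disjoint union. First I would assume, without loss of generality, that the sets $X_i$ are pairwise disjoint (replacing $X_i$ by $X_i\times\{i\}$ otherwise), and set $X=\bigcup_{i\in I}X_i$. On $X$ I put the $\sigma$-algebra $\A$ consisting of all $A\subseteq X$ such that $A\cap X_i$ is measurable in $X_i$ for every $i\in I$, and define $\mu(A)=\sum_{i\in I}\mu_i(A\cap X_i)$.

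Second, I would verify that $(X,\A,\mu)$ is a genuine measure space. Everything here is formal except countable additivity of $\mu$, which amounts to interchanging two sums of nonnegative terms: for a countable disjoint family $(A_n)$ in $\A$ one has $\mu(\bigcup_n A_n)=\sum_i\mu_i\big(\bigcup_n(A_n\cap X_i)\big)=\sum_i\sum_n\mu_i(A_n\cap X_i)=\sum_n\mu(A_n)$, the middle swap being legitimate by Tonelli's theorem for series, since $I$ is countable.

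The heart of the argument is then the decomposition of the integral. A function $f:X\to\K$ is $\A$-measurable if, and only if, each restriction $f_i:=f_{|X_i}$ is $\mu_i$-measurable (this is immediate from the definition of $\A$); and by splitting the integral over the partition $X=\bigcup_{i\in I}X_i$, first for simple functions and then by monotone convergence for nonnegative measurable functions, one obtains $\int_X|f|^p\,d\mu=\sum_{i\in I}\int_{X_i}|f_i|^p\,d\mu_i$. Consequently $f\in L^p(X,\mu)$ if, and only if, $\sum_{i\in I}\|f_i\|_{L^p(X_i)}^p<+\infty$, that is, if, and only if, the family $(f_i)_{i\in I}$ belongs to the $\ell^p$-direct sum, and in that case $\|f\|_p^p=\sum_{i\in I}\|f_i\|_{L^p(X_i)}^p$, so the two norms coincide.

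Finally I would define $\Phi:\rexp{p}{\bigoplus_{i\in I}}L^p(X_i,\mu_i)\to L^p(X,\mu)$ sending $(f_i)_{i\in I}$ to the function on $X$ equal to $f_i$ on each $X_i$. The previous step shows $\Phi$ is well-defined and isometric; surjectivity follows by reading the identification backwards (any $f\in L^p(X,\mu)$ yields the family of its restrictions $(f_{|X_i})_i$, which lies in the $\ell^p$-direct sum by the same norm identity), and linearity is immediate. Thus $\Phi$ is the desired isometric isomorphism. The only genuine point of care, beyond the countable additivity already noted, is the factorization of the integral through the disjoint union and the consistent handling of almost-everywhere equivalence classes versus representatives; both are routine, so I would not dwell on them.
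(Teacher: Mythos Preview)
Your argument is correct and is the standard one: realize the $\ell^p$-direct sum as $L^p$ of the disjoint union measure space, checking countable additivity of $\mu$ via Tonelli for series and then splitting the integral over the partition $X=\bigcup_i X_i$. There is nothing to compare against, since the paper does not give a proof of this proposition; it is stated as a ``basic result'' in the preliminaries and used without justification.
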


\section{Spaces with labelled partitions and actions on Banach Spaces}\label{sec_lab}
In this section we will introduce the structure of \emph{space with labelled partitions} and record for further use a few basic properties.

 \subsection{Spaces with labelled partitions}\label{subsec_lab}
 
 \subsubsection{Definitions}\label{subsubsec_deflab}
Let $\K=\R$ or $\C$. \smallskip \\
Consider a set $X$ and a function $p:X\rightarrow \K$. There is a natural partition $P=P(p)$ of $X$ associated with $p$: \\
We have the following equivalence relation $\sim_p$ on $X$: for $x,y \in X$, 
\begin{center} 
 $x \sim_p y$ if, and only if, $p(x)=p(y)$.
\end{center}
We define the partition associated with $p$ by $\displaystyle P(p)= \{ \pi_p^{-1}(h) \st h \in X/ \sim_p \}$ where $\pi_p$ is the canonical projection from $X$ to $X/ \sim_p$.

\begin{df}\label{labpart}
Let $X$ be a set, and $\P=\{ p: X \rightarrow \K \}$ be a family of functions. 
\begin{itemize}
 \item We say that $p$ is a \emph{labelling function} on $X$ and the pair $(P,p)$ is called a \emph{labelled partition} of $X$.\\
 \item We say that $x,y \in X$ are \emph{separated} by $p \in \P$ if $p(x)\neq p(y)$ and we denote by $\P(x|y)$ the set of all labelling functions separating $x$ and $y$.
\end{itemize}

\end{df}

\begin{rmq}\label{labpart_rmq}
 The terminology ``$x$ and $y$ are separated by $p$'' comes from the fact that, if we denote by $P$ the partition of $X$ associated with $p$, $x$ and $y$ are separated by $p$ if, and only if, $x$ and $y$ belongs to two different sets of the partition $P$ i.e. $P$ separates $x$ and $y$.
\end{rmq}

Consider a set $\P$ of labelling functions on $X$, and the $\K$-vector space $\F(\P,\K)$ of all functions from $\P$ to $\K$. Then we have a natural map $c:X\times X \rightarrow \F(\P,\K)$ given by: for $x,y \in X$ and $p \in \P$,
$$c(x,y)(p)=p(x)-p(y).$$ \smallskip 
Notice that $p$ belongs to $\P(x|y)$ if, and only if, $c(x,y)(p)\neq 0$.  

\begin{df}\label{labpart_sepmap}
 Let $X$ be a set and $\P$ be a family of labelling functions. The map $c: X \times X \rightarrow \F(\P,\K)$ such that, for $x,y \in X$ and for $p \in \P$, $c(x,y)(p)=p(x)-p(y)$ is called the \emph{separation map} of $X$ relative to $P$. 
\end{df}

We now define the notion of space with labelled partitions:

\begin{df}[Space with labelled partitions]\label{labpart_space} $\;$ \\
 Let $X$ be a set, $\P$ be a family of labelling functions from $X$ to $\K$ and $(\F(\P),\|.\|)$ be a semi-normed space of $\K$-valued functions on $\P$ such that the quotient vector space $F(\P)$ of $\F(\P)$ by its subspace $\F(\P)_0=\{\k \in \F(\P) \st \|\k\|=0 \}$ is a Banach space. \medskip \\
 We say that $(X,\P,F(\P))$ is a space with labelled partitions if, for all $x,y \in X$: 
 $$c(x,y): \P \rightarrow \K \text{ belongs to }\F(\P).$$
\end{df}

\begin{df}\label{labpart_metric}
 If $(X,\P,F(\P))$ is a space with labelled partitions, we can endow $X$ with the following pseudo-metric: $d(x,y)=\|c(x,y)\|$ for $x,y \in X$. \\
 We call $d$ the \emph{labelled partitions pseudo-metric} on $X$.
\end{df}

\begin{rmq}\label{labpart_metric_rmq}
 If $(X,\P,F(\P))$ is a space with labelled partitions, then the separation map $c:X\times X \rightarrow F(\P)$ is continuous where $X\times X$ is endowed with the product topology induced by the topology of $(X,d)$.
\end{rmq}
 
 \subsubsection{Actions on spaces with labelled partitions}\label{subsubsec_defactlab}
 Here, we describe the maps that preserve the structure of space with labelled partitions.
 
  \begin{df}[homomorphism of spaces with labelled partitions]\label{labpart_homo}
  Let $(X,\P,F(\P))$, $(X',\P',F'(\P'))$ be spaces with labelled partitions and let $f:X\rightarrow X'$ be a map from $X$ to $X'$. \smallskip \\
  We say that $f$ is a \emph{homomorphism of spaces with labelled partitions} if: \\
  \begin{enumerate}
   \item for any $p' \in \P'$, $\Phi_f(p'):=p'\circ f$ belongs to $\P$,
   \item for all $\k \in F(\P)$, $\k\circ \Phi_f$ belongs to $F'(\P')$ and, 
   $$\|\k\circ \Phi_f\|_{_{F'(\P')}}=\|\k\|_{_{F(\P)}}.$$
  \end{enumerate} \smallskip 
  An \emph{automorphism} of the space with labelled partitions $(X,\P,F(\P))$ is a bijective map $f:X\rightarrow X$ such that $f$ and $f^{-1}$ are homomorphisms of spaces with labelled partitions from $(X,\P,F(\P))$ to $(X,\P,F(\P))$.
 \end{df}
 
 \begin{rmq}\label{labpart_homo_rmq} $\;$ \\
  - If $f$ is a homomorphism of spaces with labelled partitions, then $f$ is an isometry from $X$ to $X'$ endowed with their respective labelled partitions pseudo-metrics; indeed, for $x,y \in X$, $$d_X(x,y)=\|c(x,y)\|_{_{F(\P)}}=\|c(x,y)\circ \Phi_f\|_{_{F'(\P')}}=\|c'(f(x),f(y))\|_{_{F'(\P')}}=d_{X'}(f(x),f(y)),$$ since we have $c(x,y)\circ \Phi_f=c'(f(x),f(y))$. \smallskip \\ 
  - If $f$ is an automorphism of space with labelled partitions, the map $\Phi_f$ is a bijection: $(\Phi_f)^{-1}=\Phi_{f^{-1}}$. \\
 \end{rmq}
 
 \begin{prop}\label{comp_homo}
 Let $(X,\P,F(\P))$, $(X',\P',F'(\P'))$,$(X'',\P'',F''(\P''))$ be spaces with labelled partitions and $f:X\rightarrow X',\; f':X'\rightarrow X''$ be homomorphisms of spaces with labelled partitions. \\
 We denote $\Phi_f$ the map such that $\Phi_f(p'):=p'\circ f$, for $p' \in \P'$, and $\Phi_{f'}$ the map such that $\Phi_{f'}(p''):=p''\circ f'$, for $p'' \in \P''$. \smallskip \\
 Then $f' \circ f$ is a homomorphism of spaces with labelled partitions from $(X,\P,F(\P))$ to $(X'',\P'',F''(\P''))$ and we have, by denoting $\Phi_{f'\circ f}(p''):=p''\circ (f'\circ f)$:
 $$ \Phi_{f} \circ \Phi_{f'}=\Phi_{f'\circ f}.$$
 \end{prop}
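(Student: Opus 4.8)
The plan is to first pin down the elementary set-theoretic identity relating the pullback maps, and then to deduce both homomorphism conditions for $f'\circ f$ by chaining the corresponding conditions for $f$ and $f'$. The only real content is bookkeeping about the directions of the maps: while $f$ and $f'$ point ``forward'' ($X\to X'\to X''$), the induced maps $\Phi_f:\P'\to\P$ and $\Phi_{f'}:\P''\to\P'$ point backward, so their composite runs $\P''\to\P'\to\P$ and one must keep the order straight.

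First I would verify $\Phi_f\circ\Phi_{f'}=\Phi_{f'\circ f}$ directly from the definitions. For $p''\in\P''$ we have $\Phi_{f'}(p'')=p''\circ f'$, hence $\Phi_f(\Phi_{f'}(p''))=(p''\circ f')\circ f=p''\circ(f'\circ f)=\Phi_{f'\circ f}(p'')$; this is just associativity of composition of functions. The same computation also yields homomorphism condition 1 for $f'\circ f$: since $f'$ is a homomorphism, $p''\circ f'\in\P'$ for every $p''\in\P''$, and since $f$ is a homomorphism, $q'\circ f\in\P$ for every $q'\in\P'$; applying the latter to $q'=p''\circ f'$ shows $\Phi_{f'\circ f}(p'')=p''\circ(f'\circ f)\in\P$.

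It then remains to check condition 2. Fix $\k\in F(\P)$. Using the identity from the first step, $\k\circ\Phi_{f'\circ f}=\k\circ(\Phi_f\circ\Phi_{f'})=(\k\circ\Phi_f)\circ\Phi_{f'}$. By condition 2 for $f$, the inner function $\k\circ\Phi_f$ lies in $F'(\P')$ with $\|\k\circ\Phi_f\|_{F'(\P')}=\|\k\|_{F(\P)}$. Applying condition 2 for $f'$ to this element of $F'(\P')$ gives that $(\k\circ\Phi_f)\circ\Phi_{f'}$ lies in $F''(\P'')$ with $\|(\k\circ\Phi_f)\circ\Phi_{f'}\|_{F''(\P'')}=\|\k\circ\Phi_f\|_{F'(\P')}$. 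Combining the two equalities yields $\|\k\circ\Phi_{f'\circ f}\|_{F''(\P'')}=\|\k\|_{F(\P)}$, which is exactly condition 2 for $f'\circ f$.

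I expect no serious obstacle: the statement is a functoriality (contravariance) assertion and the argument is a direct diagram chase. The one point requiring mild care is that elements of $F(\P)$ are classes in a quotient of a semi-normed space, so one should note that $\k\mapsto\k\circ\Phi_f$ is well defined on classes — but this is guaranteed by the norm preservation in condition 2, since a representative of norm zero is sent to a function of norm zero, so it poses no difficulty.
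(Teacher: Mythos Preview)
Your proof is correct and follows essentially the same approach as the paper: first establish the identity $\Phi_f\circ\Phi_{f'}=\Phi_{f'\circ f}$ by associativity of composition, then chain conditions 1 and 2 of the homomorphism definition for $f$ and $f'$ to obtain them for $f'\circ f$. Your added remark about well-definedness on the quotient is a nice extra observation not spelled out in the paper.
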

 
 \begin{proof}
  For all $p'' \in \P''$, we have: 
  \begin{center}
   \begin{tabular}{rl}
    $\Phi_{f'\circ f}(p'')$&$=p''\circ (f'\circ f)$ \smallskip \\
    &$=(p''\circ f')\circ f$  \smallskip \\
    &$=\Phi_{f'}(p'') \circ f$ with $\Phi_{f'}(p'') \in \P'$ by Definition \ref{labpart_homo}  \smallskip \\
    &$=\Phi_f(\Phi_{f'}(p''))$ and hence, \medskip \\
    $\Phi_{f'\circ f}(p'')$&$=\Phi_f\circ \Phi_{f'}(p'') \in \P$ by Definition \ref{labpart_homo}.
   \end{tabular}
  \end{center} \smallskip
  It follows that $\Phi_{f} \circ \Phi_{f'}=\Phi_{f'\circ f}$. \\
  
  Now, let $\k \in F(\P)$. Since $\k \circ \Phi_{f}$ belongs to $F'(\P')$, 
  $$\k \circ \Phi_{f'\circ f}= (\k \circ \Phi_{f}) \circ \Phi_{f'} \in F''(\P''),$$
  and we clearly have, using the previous equality, 
  $$\|\k \circ \Phi_{f'\circ f}\|_{_{F''(\P'')}}= \|\k \circ \Phi_{f}\|_{_{F'(\P')}}=\|\k\|_{_{F(\P)}}.$$

 \end{proof}
 
 \begin{rmq}\label{comp_homo_rmq} Assume a group $G$ acts by automorphisms on $(X,\P,F(\P))$. For $g \in G$, we denote by $\tau(g): X \rightarrow X$, the map $x \mapsto \tau(g)x=gx$. Then, by Proposition  \ref{comp_homo}, we have: 
  $$\Phi_{\tau(g_2)}\circ \Phi_{\tau(g_1)} = \Phi_{\tau(g_1g_2)}.$$ 
 \end{rmq}

 \begin{df}\label{labpart_propertiesaction}
  Let $(X,\P,F(\P))$ be a space with labelled partitions and $G$ be a topological group acting by automorphisms on $(X,\P,F(\P))$. \medskip 
  \begin{itemize}
   \item We say that $G$ acts \emph{continuously} on $(X,\P,F(\P))$, if the $G$-action on $(X,d)$ is strongly continuous. \\
   \item We say that $G$ acts \emph{properly} on $(X,\P,F(\P))$, if the $G$-action on $(X,d)$ is metrically proper where $d$ is the labelled partitions pseudo-metric on $X$.
  \end{itemize}
 \end{df}

 \begin{rmq}\label{labpart_actionproper_rmq}
  Notice that if a topological Hausdorff group $G$ acts properly continuously by automorphisms on a space $(X,\P,F(\P))$ with labelled partitions, then it is locally compact and $\s$-compact: in fact, let $x_0 \in X$; for $r>0$, $V_r=\overline{\{ g \in G \st d(gx_0,x_0) \leq r \}}$ is a compact neighbourhood of the identity element $e$ in $G$ since the action on $(X,d)$ is strongly continuous and proper, and we have $G=\cup_{n \in\N^*}V_n$.
 \end{rmq}
 
 \begin{prop}\label{labpart_proper_prop}
  Let $G$ be a topological group. Assume $G$ acts continuously by automorphisms on $(X,\P,F(\P))$. \smallskip \\
  The $G$-action on $(X,\P,F(\P))$ is proper if, and only if, for every (resp. for some) $x_0 \in X$, $\|c(gx_0,x_0)\| \rightarrow \infty$ when $g\rightarrow \infty$.
 \end{prop}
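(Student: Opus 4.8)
The plan is to show that the statement follows essentially by unwinding the relevant definitions, together with the observation that the induced $G$-action on the pseudo-metric space $(X,d)$ is isometric.

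First, by Definition \ref{labpart_propertiesaction}, the $G$-action on $(X,\P,F(\P))$ is proper precisely when the $G$-action on the pseudo-metric space $(X,d)$ is metrically proper, where $d$ is the labelled partitions pseudo-metric. So it suffices to reformulate metric properness of the action on $(X,d)$ in terms of the separation map $c$.

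Next, I would check that the hypotheses needed to apply the definition of metric properness from Paragraph \ref{sub_prelim_proper} are met: the $G$-action on $(X,d)$ is strongly continuous by the continuity assumption (Definition \ref{labpart_propertiesaction}), and it is isometric because each $\tau(g)$ is an automorphism of the labelled partition structure, hence an isometry of $(X,d)$ by Remark \ref{labpart_homo_rmq}. The isometry property is the one genuinely substantive ingredient: it is what guarantees, via the triangle inequality $|d(gx_0,x_0) - d(gx_1,x_1)| \leq 2\, d(x_0,x_1)$, that $d(gx_0,x_0) \to +\infty$ for some base point is equivalent to the same statement for every base point. This is exactly what legitimizes the ``for every (resp. for some)'' phrasing inherited from the definition of metrically proper.

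Finally, by Definition \ref{labpart_metric} we have $d(gx_0, x_0) = \|c(gx_0, x_0)\|$ for every $x_0 \in X$ and every $g \in G$. Chaining the three steps, the action on $(X,\P,F(\P))$ is proper if and only if $\|c(gx_0,x_0)\| \to \infty$ as $g \to \infty$, for every (resp. for some) $x_0 \in X$, which is the asserted equivalence. As the argument is purely a succession of definitions, I do not anticipate any real obstacle; the only point requiring care is confirming the isometric character of the action, so that the base-point independence built into the definition of metric properness can be invoked.
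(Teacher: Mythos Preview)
Your proposal is correct and follows exactly the same approach as the paper, which simply says the result follows immediately from the definition of a metrically proper action. You have carefully unpacked the one-line proof by making explicit the isometry of the action (via Remark \ref{labpart_homo_rmq}) and the identity $d(gx_0,x_0)=\|c(gx_0,x_0)\|$, which is precisely what underlies the paper's appeal to the definition.
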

 
 \begin{proof}
  It follows immediatly from the definition of a metrically proper action.
 \end{proof}
 
 \begin{lem}[pull back of space with labelled partitions]\label{pullbackpart}
  Let $(X,\P_X,F_X(\P_X))$ be a space with labelled partitions, $Y$ be a set and $f:Y\rightarrow X$ be a map. Then there exists a \emph{pull back} structure of space with labelled partitions $(Y,\P_Y,F_Y(\P_Y))$ turning $f$ into a homomorphism. \smallskip \\
  Moreover, if $G$ acts on $Y$ and $G$ acts continuously by automorphisms on $(X,\P_X,F_X(\P_X))$ such that $f$ is $G$-equivariant, then $G$ acts continuously by automorphisms on $(Y,\P_Y,F_Y(\P_Y))$.
 \end{lem}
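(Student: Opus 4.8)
The plan is to pull back every ingredient through $f$ in the essentially forced way and then to repair completeness by hand. First I would set $\P_Y:=\{\, p\circ f \st p\in\P_X\,\}$, so that the map $\Phi_f\colon \P_X\rightarrow\P_Y$, $p\mapsto p\circ f$, is surjective by construction; this makes condition 1 of Definition \ref{labpart_homo} automatic for $f$. Surjectivity of $\Phi_f$ also makes the pullback operator $\Phi_f^{*}\colon \F(\P_Y,\K)\rightarrow\F(\P_X,\K)$, $\k\mapsto\k\circ\Phi_f$, injective, since $\k\circ\Phi_f=0$ forces $\k$ to vanish on every $p\circ f$, that is on all of $\P_Y$.

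Next I would transport the semi-norm. On the space $\F_Y(\P_Y):=(\Phi_f^{*})^{-1}\big(\F_X(\P_X)\big)$ of those $\k\colon\P_Y\rightarrow\K$ with $\k\circ\Phi_f\in\F_X(\P_X)$, I set $\|\k\|_{F_Y(\P_Y)}:=\|\k\circ\Phi_f\|_{F_X(\P_X)}$, which is clearly a semi-norm. The key computation is that separation maps pull back to separation maps: for $y,y'\in Y$ and $p\in\P_X$,
$$\big(c_Y(y,y')\circ\Phi_f\big)(p)=p(f(y))-p(f(y'))=c_X(f(y),f(y'))(p),$$
so $c_Y(y,y')\circ\Phi_f=c_X(f(y),f(y'))\in\F_X(\P_X)$. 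This shows simultaneously that $c_Y(y,y')\in\F_Y(\P_Y)$, that $\|c_Y(y,y')\|_{F_Y(\P_Y)}=\|c_X(f(y),f(y'))\|_{F_X(\P_X)}$, and — once the triple is a genuine space with labelled partitions — that $f$ satisfies condition 2 of Definition \ref{labpart_homo}, hence is a homomorphism and in particular an isometry for the two pseudo-metrics by Remark \ref{labpart_homo_rmq}.

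The hard part is verifying that the quotient $F_Y(\P_Y)$ is a Banach space. The semi-norm was designed precisely so that $\Phi_f^{*}$ descends to an injective linear isometry $\overline{\Phi_f^{*}}\colon F_Y(\P_Y)\hookrightarrow F_X(\P_X)$; completeness of $F_Y(\P_Y)$ is therefore equivalent to the image of this isometry being closed in the Banach space $F_X(\P_X)$, which need not hold for the full preimage $(\Phi_f^{*})^{-1}(\F_X(\P_X))$. I would resolve this by cutting the structure down to the closed linear span $B:=\overline{\mathrm{span}}\{\, c_X(f(y),f(y')) \st y,y'\in Y\,\}$ inside $F_X(\P_X)$: as a closed subspace of a Banach space $B$ is Banach, and it contains all the pulled-back separation classes. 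The technical heart of the proof is then to present $B$ as the Banach quotient of a semi-normed space of functions on $\P_Y$ containing the $c_Y(y,y')$ (equivalently, to check that the relevant subspace of $\F_X(\P_X)$ has closed image), which is exactly where completeness must be argued carefully rather than assumed; granting this, the triple $(Y,\P_Y,F_Y(\P_Y))$ is a space with labelled partitions and $f$ is a homomorphism into $(X,\P_X,F_X(\P_X))$.

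For the equivariant statement, assume $G$ acts on $Y$, acts continuously by automorphisms on $(X,\P_X,F_X(\P_X))$, and $f$ is $G$-equivariant, i.e. $f\circ\tau_Y(g)=\tau_X(g)\circ f$ where $\tau_Y(g)\colon y\mapsto gy$ and $\tau_X(g)\colon x\mapsto gx$. I would check that each $\tau_Y(g)$ is an automorphism of $(Y,\P_Y,F_Y(\P_Y))$: for $q=p\circ f\in\P_Y$, equivariance gives
$$q\circ\tau_Y(g)=p\circ f\circ\tau_Y(g)=\big(p\circ\tau_X(g)\big)\circ f=\Phi_f\big(\Phi_{\tau_X(g)}(p)\big)\in\P_Y,$$
since $\Phi_{\tau_X(g)}(p)\in\P_X$ because $\tau_X(g)$ is an automorphism of $X$; this is condition 1, while condition 2 together with bijectivity ($\tau_Y(g)^{-1}=\tau_Y(g^{-1})$) follow from the norm identity of the second step applied on $X$. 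Finally, equivariance yields $d_Y(gy_0,g_0y_0)=\|c_X(gf(y_0),g_0f(y_0))\|_{F_X(\P_X)}=d_X(gf(y_0),g_0f(y_0))$, so strong continuity of the $G$-action on $(X,d_X)$ transfers to strong continuity on $(Y,d_Y)$; hence $G$ acts continuously by automorphisms on $(Y,\P_Y,F_Y(\P_Y))$, which completes the proof.
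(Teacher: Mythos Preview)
Your approach is essentially the same as the paper's: define $\P_Y=\{p\circ f\st p\in\P_X\}$, observe that $c_Y(y,y')\circ\Phi_f=c_X(f(y),f(y'))$, transport the norm via this identity, and then use $G$-equivariance of $f$ together with the identity $(p\circ f)\circ\tau_Y(g)=(p\circ\tau_X(g))\circ f$ for the second part. The one difference is that the paper avoids your detour through the full preimage $(\Phi_f^{*})^{-1}(\F_X(\P_X))$ and the attendant worry about closedness of its image: it works directly on $\Vect(c_Y(y,y')\st y,y'\in Y)$, equips it with the norm $\|\k\|:=\|T(\k)\|_{F_X(\P_X)}$ where $T(c_Y(y,y'))=c_X(f(y),f(y'))$, and simply \emph{defines} $F_Y(\P_Y)$ as the completion of this span, so the Banach requirement holds by fiat and your ``granting this'' step disappears.
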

 
 \begin{proof}
  We consider the family of labelling functions on $Y$ :
  $$ \P_Y=\{ p\circ f \st p \in \P_X\}, $$
  and let $c_Y$ be the separation map on $Y$ associated with $\P_Y$. \\
  Let $T: \Vect\big(c_Y(y,y') \st y,y' \in Y\big) \rightarrow F_X(\P_X)$ be the linear map such that $T(c_Y(y,y'))=c_X(f(y),f(y'))$.
  The map $T$ is well defined and is injective since, for every $p \in \P_X$,
  $$c_X(f(y),f(y'))(p)=p\circ f(y)-p\circ f(y')=c_Y(y,y')(p\circ f). $$
  On $\Vect\big(c_Y(y,y') \st y,y' \in Y\big)$, we consider the following norm : \smallskip \\
  for $\k \in \Vect\big(c_Y(y,y') \st y,y' \in Y\big)$, we set,
  $$ \|\k\|_{_{\P_Y}}=\|T(\k)\|_{_{F_X(\P_X)}}. $$
  And we set $F_Y(\P_Y)=\overline{\Vect\big(c_Y(y,y') \st y,y' \in Y\big)}^{\|\cdotp\|_{_{\P_Y}}}$. Hence, by construction, $(Y,\P_Y,F_Y(\P_Y))$ is a space with labelled partitions and $f$ is clearly an homorphism from $(Y,\P_Y,F_Y(\P_Y))$ to $(X,\P_X,F(\P_X))$ since, for all $y,y' \in Y$,
  $$ c_Y(y,y')\circ \Phi_f=c_X(f(y),f(y')), $$
  where $\Phi_f(p)=p\circ f$ for $p \in \P_X$. \smallskip \\

  Assume that $G$ acts on $Y$ via $\tau_Y$ and $G$ acts continuously by automorphisms on $(X,\P,F(\P))$ via $\tau_X$, and $f$ is $G$-equivariant. We denote, for $p \in \P_X$ and $g \in G$: \\
  - $\Phi_{\tau_X(g)}(p):=p \circ \tau_X(g)$ and, \\
  - $\Phi_{\tau_Y(g)}(p\circ f):=(p \circ f) \circ \tau_Y(g)$. \smallskip \\
  Since $f$ is $G$-equivariant and $\P_X$ is stable by $\tau_X$, we have, for all $p \in \P_X$ and all $g \in G$ :
  $$ (p \circ f) \circ \tau_Y(g)=(p \circ \tau_X(g))\circ f \in \P_Y .$$
  Now, for every $\k \in F_Y(\P_Y)$ and every $g \in G$, we have :
  $$ \|\k\circ \Phi_{\tau_Y(g)}\|_{_{\P_Y}}=\|T(\k)\circ \Phi_{\tau_X(g)} \|_{_{F_X(\P_X)}}=\|T(\k)\|_{_{F_X(\P_X)}}=\|\k\|_{_{\P_Y}}. $$
  It follows that $G$ acts by automorphisms on $(Y,\P_Y,F_Y(\P_Y))$. \\
  Moreover, we have, for every $y \in Y$ and every $g \in G$, $d_Y(\tau_Y(g)y,y)=d_X(\tau_X(g)f(y),f(y))$, where $d_X$ and $d_Y$ are the labelled partitions pseudo-metric on respectively $X$ and $Y$. Hence, for $y \in Y$, $y \rightarrow \tau_Y(g)y$ is continuous from $G$ to $(Y,d_Y)$. 
 \end{proof}

 \begin{df}\label{pullbackk}
  Let $(X,\P_X,F_X(\P_X))$ be a space with labelled partitions, $Y$ be a set and $f:Y\rightarrow X$ be a map. The structure of space with labelled partitions $(Y,\P_Y,F_Y(\P_Y))$ given by Lemma \ref{pullbackpart} is called the \emph{pull back by $f$ of the space with labelled partitions $(X,\P_X,F_X(\P_X))$}.
 \end{df}

 \subsection{Examples}
 
 \subsubsection{Spaces with measured walls}
 
 Our first example of spaces with labelled partitions is given by spaces with measured walls. Here we cite the definition of the structure of space with measured walls from \cite{wreath}. \smallskip \\
 Let $X$ be a set. We endow $2^X$ with the product topology and we consider, for $x \in X$, the clopen subset of $2^X$, $\A_x:=\{ A \subset X \st x \in A\}$.
 \begin{df}\label{labpart_meswalls}
A \emph{measured walls structure} is a pair $(X, \mu)$ where $X$ is a set and
$\mu$ is a Borel measure on $2^X$ such that for all $x, y \in X$:
$$ d_{\mu}(x,y):=\mu(\A_x\ds \A_y)<+\infty $$
\end{df}

\begin{prop}\label{walls_labpart}
 Let $(X,\mu)$ be a measured space with walls. Then, for every real number $q \geq 1$, $(X,\P,L^q(\P,\mu))$ is a space with labelled partitions where $\P=\{ \cara_h \st h \in 2^X \}$. \\
 Morever, we have, for $x,y \in X$, 
 $$ \|c(x,y)\|_q^q=d_{\mu}(x,y) .$$
\end{prop}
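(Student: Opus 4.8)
The plan is to make everything explicit by identifying the index set $\P$ with $2^X$ through the bijection $h \mapsto \cara_h$, so that $L^q(\P,\mu)$ is literally $L^q(2^X,\mu)$ with its usual seminorm $\|\cdot\|_q$, whose quotient by the null functions is the Banach space $F(\P)$. This is just the standard completeness of $L^q$ for $q\geq 1$ and supplies the semi-normed structure demanded by Definition \ref{labpart_space}, so the only genuine thing to verify is that each separation map $c(x,y)$ lands in $\F(\P)$ and that its norm computes the wall pseudo-metric.

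First I would unwind the separation map. For $x,y\in X$ and $h \in 2^X$ one has
$$ c(x,y)(\cara_h)=\cara_h(x)-\cara_h(y). $$
Now $\cara_h(x)=1$ precisely when $x\in h$, that is, when $h\in\A_x$; hence $\cara_h(x)=\cara_{\A_x}(h)$ and likewise $\cara_h(y)=\cara_{\A_y}(h)$. Viewed as a function of $h\in 2^X$ this means
$$ c(x,y)=\cara_{\A_x}-\cara_{\A_y}, $$
which is Borel measurable since $\A_x$ and $\A_y$ are clopen in $2^X$.

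The key observation is that $\cara_{\A_x}-\cara_{\A_y}$ takes values in $\{-1,0,1\}$ and is nonzero exactly on the symmetric difference $\A_x\ds\A_y$, where its modulus equals $1$. Consequently $|c(x,y)|^q=|c(x,y)|=\cara_{\A_x\ds\A_y}$ pointwise, and integrating against $\mu$ gives
$$ \|c(x,y)\|_q^q=\int_{2^X}|\cara_{\A_x}(h)-\cara_{\A_y}(h)|^q\,d\mu(h)=\mu(\A_x\ds\A_y)=d_{\mu}(x,y). $$
By the definition of a measured walls structure this quantity is finite, so $c(x,y)\in\F(\P)$, which is exactly the condition for $(X,\P,L^q(\P,\mu))$ to be a space with labelled partitions; the same display is the announced norm formula.

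I do not expect a real obstacle here: the argument is a direct computation. The only point requiring a little care is the bookkeeping of the two identifications — between $\P$ and $2^X$, and between $\cara_h(x)$ and $\cara_{\A_x}(h)$ — together with the remark that raising a function valued in $\{-1,0,1\}$ to the $q$-th power leaves its modulus unchanged, which is precisely what makes the formula hold simultaneously for every $q\geq 1$.
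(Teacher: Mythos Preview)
Your proof is correct and follows essentially the same route as the paper: identify $\P$ with $2^X$ via $h\mapsto\cara_h$, rewrite $c(x,y)(\cara_h)=\cara_{\A_x}(h)-\cara_{\A_y}(h)$, and integrate to get $\mu(\A_x\ds\A_y)=d_\mu(x,y)$. If anything, you are slightly more explicit than the paper in spelling out why the $q$-th power integral collapses to the measure of the symmetric difference (the $\{-1,0,1\}$-valued remark) and in noting measurability.
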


\begin{proof}
 We denote $\P=\{ \cara_h \st h \in 2^X \}$. Then $\P$ is a family of labelling functions on $X$ and we denote by $c$ the separation map of $X$ associated with $\P$. \\
 Let $x,y \in X$. For $h \in 2^X$, we have: 
 $$c(x,y)(\cara_h)=\cara_h(x)-\cara_h(y)=\cara_{\A_x}(h)-\cara_{\A_y}(h).$$
 The function $f:2^X \rightarrow \P$ such that, for $h \in 2^X$, $f(h)=\cara_h$ is a bijection, and we endow $\P$ with the direct image topology induced by $f$. Then, $\mu^*:\P\rightarrow \R$ such that, for any Borel subset $A$ of $\P$, $\mu^*(A)=\mu(f^{-1}(A))$ is a Borel measure on $\P$. \smallskip \\
 We have $\|c(x,y)\|_q^q=\int_{\P} |c(x,y)(p)|^q d\mu^*(p)=\int_{2^X} |\cara_{\A_x}(h)-\cara_{\A_y}(h)|^q d\mu(h)=\mu(\A_x\ds \A_y)$, and then:
 $$\|c(x,y)\|_q^q=d_{\mu}(x,y) < +\infty .$$
 It follows that, for all $x,y \in X$, $c(x,y)$ belongs to $L^q(\P,\mu)$ and hence, $(X,\P,L^q(\P,\mu))$ is a space with labelled partitions.
\end{proof}

 \begin{center}
 \includegraphics[width=11cm]{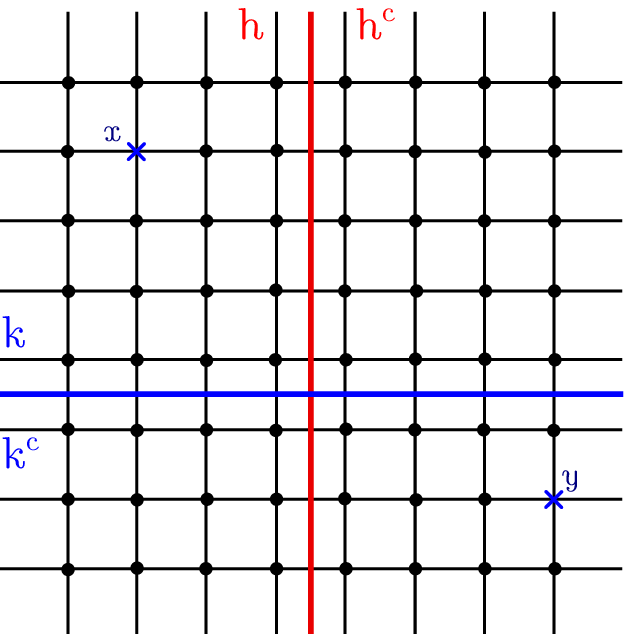} \\
  Examples of walls in $\Z^2$.
 \end{center}
 
 \subsubsection{Gromov hyperbolic groups}


The following Lemma is a reformulation of a result of Yu (see \cite{yuhyp}, Corollary 3.2) based on a construction of Mineyev in \cite{minhyp}. \\
For a triple $x,y,z$ in a metric space $(X,d)$, we denote by $(x|y)_z=\frac{1}{2}(d(x,z)+d(y,z)-d(x,y))$. 
\begin{lem}[Mineyev, Yu]\label{hyp_lem}
Let $\G$ be a finitely generated $\d$-hyperbolic group. Then there exists a $\G$-equivariant function $h:\G \times \G \rightarrow \F_c(\G)$ where $\F_c(\G)=\{ f:\G\rightarrow \R \text{ with finite support}\st \|f\|_1=1\}$ such that:
\begin{enumerate}
 \item[1.] for all $a,x \in \G$, $\su h(x,a) \subset B(a,10\d)$,
 \item[2.] there exists constants $C \geq 0$ and $\eps > 0$ such that, for all $x,x',a \in \G$,
 $$\| h(x,a)-h(x',a)\|_1\leq C e^{-\eps (x|x')_a}, $$
 \item[3.] there exists a constant $K \geq 0$ such that, for all $x,x' \in \G$ with $d(x,x')$ large enough, 
 $$\#\{ a \in \G \st \su h(x,a) \cap \su h(x',a)=\emptyset \}\geq d(x,x')-K. $$
\end{enumerate}

\end{lem}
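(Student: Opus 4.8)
The function $h$ is exactly the one furnished by Mineyev's construction in \cite{minhyp}, and Statements 1.\ and 2.\ are Yu's Corollary 3.2 in \cite{yuhyp}; the plan is therefore to take $h(x,a)$ to be Mineyev's equivariant ``barycentric'' $\ell^1$-normalised function, i.e.\ a finitely supported probability measure $h(x,a)\in\F_c(\G)$ concentrated within $B(a,10\d)$ and pushed from $a$ in the direction of $x$, the $\G$-equivariance $h(gx,ga)=g_*h(x,a)$ being built into the construction. Statement 1.\ is then the localisation estimate for this measure, and Statement 2.\ is its exponential contraction property: when $x,x'$ are seen almost identically from $a$ (that is, $(x|x')_a$ large), the two approximating geodesics $[a,x]$ and $[a,x']$ fellow-travel near $a$, so the two measures nearly coincide. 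Both are recorded in \cite{yuhyp}, Corollary 3.2, and I would simply invoke them.

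It remains to establish Statement 3., which is where the genuine work lies. I would fix $x,x'\in\G$ together with a geodesic segment $a_0=x,a_1,\dots,a_n=x'$, where $n=d(x,x')$. For an interior vertex $a=a_i$ one has $(x|x')_a=\frac12\bigl(d(x,a)+d(x',a)-d(x,x')\bigr)=0$, so $x$ and $x'$ are seen from $a$ in opposite directions. I would then show that for every $i$ with $10\d<i<n-10\d$ the supports of $h(x,a_i)$ and $h(x',a_i)$ are disjoint: if some $z$ lay in both, then $z\in B(a_i,10\d)$ and, by the directional structure of Mineyev's measure, $z$ would lie coarsely on both $[a_i,x]$ and $[a_i,x']$. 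The Gromov-product relations $(z|x)_{a_i}\approx d(z,a_i)\approx (z|x')_{a_i}$, combined with $(x|x')_{a_i}=0$ and $\d$-thinness of the triangle $\{x,x',a_i\}$, would force $d(z,a_i)\le K_0\d$ for a universal $K_0$, contradicting the fact that the mass of $h(\cdot,a_i)$ concentrates near radius $\approx 10\d$. Each of the $n+1$ vertices outside the two end-zones of radius $10\d$ thus contributes, giving at least $n-K$ good basepoints with $K=20\d+O(1)$, which is exactly Statement 3.

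The main obstacle is precisely this disjointness step. Property 1.\ alone only places both supports inside $B(a_i,10\d)$, and a bare triangle-inequality computation merely bounds $d(z,a_i)$ by $O(\d)$; to upgrade this to genuine disjointness one must use finer information from Mineyev's construction than is visible in Statements 1.\ and 2.\ --- namely that the mass of $h(x,a)$ sits near radius $\approx 10\d$ and is pushed toward $x$, so that a common support point would have to be simultaneously close to and far from $a_i$. Pinning down the constants (the interplay between the localisation radius $10\d$, the annular concentration radius, and the thin-triangle constant) so that the end-zone loss is a clean additive $K$, uniform in $x$ and $x'$, is the delicate bookkeeping I would have to carry out; once disjointness is secured for interior vertices, the counting itself is immediate.
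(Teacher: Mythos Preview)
The paper does not prove this lemma at all: it introduces it as ``a reformulation of a result of Yu (see \cite{yuhyp}, Corollary 3.2) based on a construction of Mineyev in \cite{minhyp}'' and then simply states it, treating all three items as a black box imported from the literature. So there is no argument in the paper to compare yours against; the paper's ``proof'' is a citation.

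Your proposal therefore goes further than the paper does. For items 1.\ and 2.\ you do exactly what the paper does, namely invoke Mineyev--Yu. For item 3.\ you sketch an argument along a geodesic $[x,x']$: at an interior vertex $a_i$ one has $(x|x')_{a_i}=0$, and you want the directional nature of Mineyev's measure to force $\su h(x,a_i)$ and $\su h(x',a_i)$ apart. This is the correct heuristic and is indeed how such statements are established, but, as you yourself flag, the disjointness step cannot be extracted from items 1.\ and 2.\ alone: item 1.\ only traps both supports inside the same ball $B(a_i,10\d)$, and item 2.\ gives nothing when the Gromov product vanishes. One genuinely needs the finer feature of Mineyev's construction that $h(x,a)$ is an average of ``flowers'' $f(x,a)$ supported on the sphere $S(a,10\d)$ and concentrated on the side of $a$ facing $x$; disjointness then follows because a common point would lie within $O(\d)$ of both $[a_i,x]$ and $[a_i,x']$ at distance $10\d$ from $a_i$, which thin triangles forbid once $10\d$ exceeds the thinness constant. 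Your outline is right in spirit, but as written it is a plan rather than a proof --- which is still more than the paper offers, since the paper is content to cite the statement wholesale.
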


 \begin{center}
 \includegraphics[width=11cm]{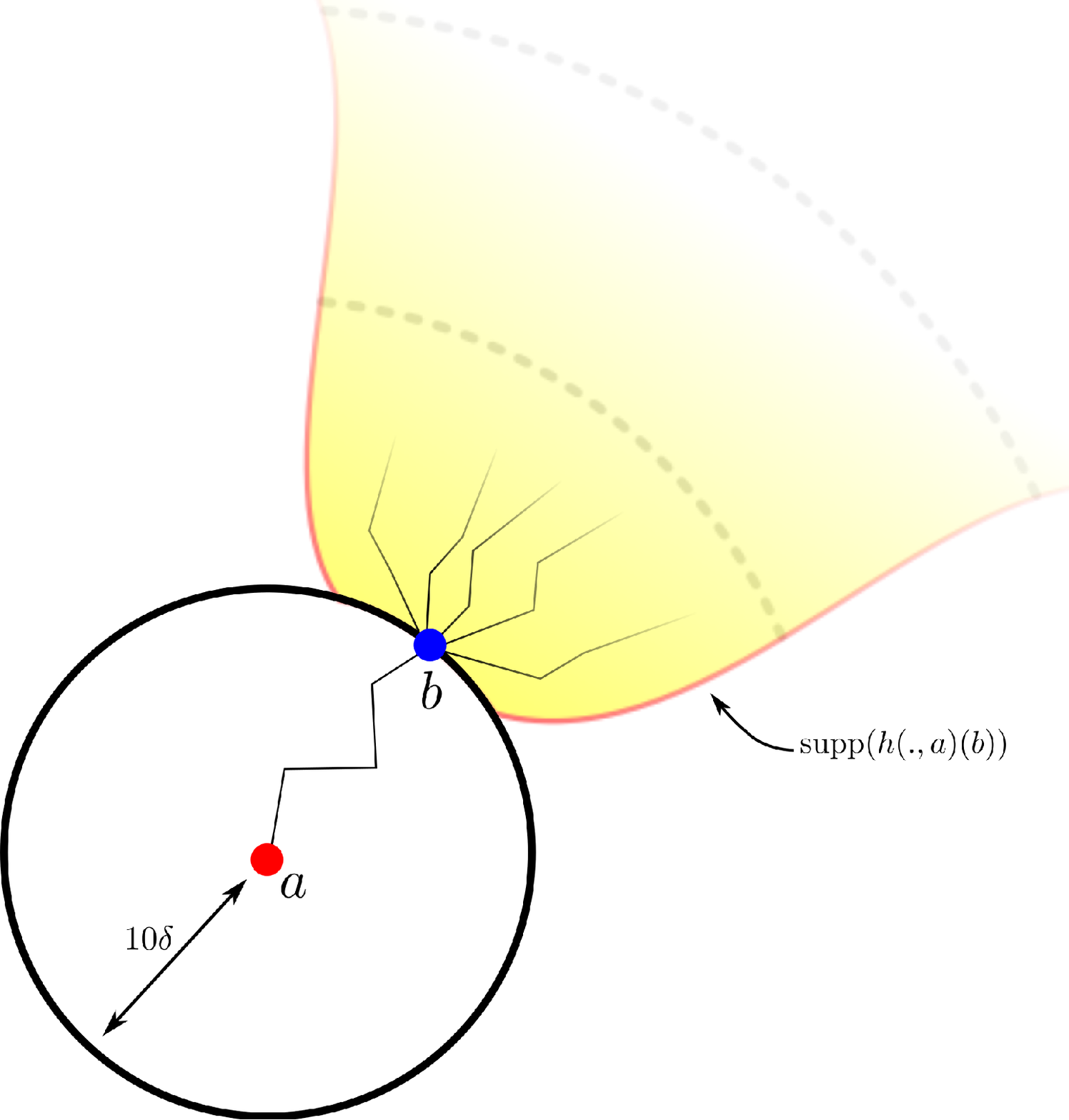} \\
  Support of the labelling function associated with $(a,b)$ with $d(a,b)=10\d$.
 \end{center}
 
 This Lemma gives us a way to build a structure of labelled partitions on Gromov hyperbolic groups:

\begin{prop}[Labelled partitions on a $\d$-hyperbolic group]\label{hyp_labpart}
 Let $\G$ be a finitely generated $\d$-hyperbolic group and we denote $\P=\{(a,b) \in \G \times \G \st d(a,b)\leq 10\d \}$.
 There exists $q_0\geq 1$ such that, for all $q > q_0$, $(\G,\P,\ell^q(\P))$ is a space with labelled partitions.
\end{prop}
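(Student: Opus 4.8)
The plan is to realise each pair $(a,b) \in \P$ as a labelling function on $\G$ by means of the function $h$ of Lemma \ref{hyp_lem}, and then to control the $\ell^q$-norm of the separation map using the exponential decay of property 2 together with the (at most) exponential growth of $\G$. First I would define, for each $(a,b) \in \P$, the labelling function
$$ p_{(a,b)} : \G \to \R, \qquad p_{(a,b)}(x) = h(x,a)(b). $$
This makes sense since $\su h(x,a) \subset B(a,10\d)$ by property 1, and $(a,b) \in \P$ means precisely $b \in B(a,10\d)$. Identifying $\P$ with this family of functions, the separation map reads, for $x,y \in \G$,
$$ c(x,y)\big((a,b)\big) = h(x,a)(b) - h(y,a)(b), $$
so the whole statement reduces to showing $c(x,y) \in \ell^q(\P)$ for all $x,y \in \G$ once $q$ is large enough.

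Next I would split the $\ell^q$-norm according to the first coordinate $a$. For fixed $a$, the pairs $(a,b) \in \P$ are exactly those with $b \in B(a,10\d)$, and both $h(x,a)$ and $h(y,a)$ are supported there; hence
$$ \sum_{b : (a,b) \in \P} \big| h(x,a)(b)-h(y,a)(b) \big|^q = \| h(x,a)-h(y,a) \|_q^q \leq \| h(x,a)-h(y,a) \|_1^q, $$
where the inequality is the elementary bound $\|\cdot\|_q \leq \|\cdot\|_1$ valid for $q \geq 1$. Property 2 of Lemma \ref{hyp_lem} then controls the right-hand side by $C^q e^{-\eps q (x|y)_a}$, so that
$$ \| c(x,y) \|_q^q \leq C^q \sum_{a \in \G} e^{-\eps q (x|y)_a}. $$

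The heart of the argument, and the step I expect to be the main obstacle, is the convergence of $\sum_{a \in \G} e^{-\eps q (x|y)_a}$ for large $q$. Here I would combine two ingredients: the standard hyperbolic estimate $(x|y)_a \geq d(a,[x,y]) - 4\d$ comparing the Gromov product with the distance to a geodesic $[x,y]$, and the fact that a finitely generated group has at most exponential growth, say $|B(z,r)| \leq e^{\omega r}$ for all $z$ and some $\omega > 0$. Since a geodesic $[x,y]$ meets at most $d(x,y)+1$ group elements, the number of $a$ with $d(a,[x,y]) \in [r,r+1)$ is at most $(d(x,y)+1)\,e^{\omega(r+1)}$. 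Grouping the sum by the value of $d(a,[x,y])$ then gives
$$ \sum_{a \in \G} e^{-\eps q (x|y)_a} \leq (d(x,y)+1)\, e^{4\eps q \d + \omega} \sum_{r \geq 0} e^{(\omega - \eps q) r}, $$
a finite geometric series precisely when $q > \omega/\eps$.

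Setting $q_0 = \max(1, \omega/\eps)$ then yields $c(x,y) \in \ell^q(\P)$ for all $q > q_0$ and all $x,y \in \G$, which is exactly the assertion that $(\G,\P,\ell^q(\P))$ is a space with labelled partitions. If one prefers to avoid invoking the Gromov-product-versus-geodesic comparison, the cruder observation that $(x|y)_a \leq r$ forces $d(x,a) \leq d(x,a)+d(y,a) = 2(x|y)_a + d(x,y) \leq 2r + d(x,y)$, so that $\{a \st (x|y)_a \leq r\} \subseteq B(x,\,2r+d(x,y))$, delivers the same finiteness conclusion at the cost of the slightly worse threshold $q_0 = 2\omega/\eps$.
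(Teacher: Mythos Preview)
Your argument is correct and follows the same strategy as the paper: identify $(a,b)$ with the labelling function $x\mapsto h(x,a)(b)$, split the $\ell^q$-sum over $a$, pass from $\|\cdot\|_q$ to $\|\cdot\|_1$, apply the exponential decay from Lemma~\ref{hyp_lem}, and play it against the (at most) exponential growth of $\G$ to obtain a convergent geometric series for $q$ large. The only cosmetic difference is that the paper bypasses the distance-to-geodesic comparison and uses directly the elementary bound $(x|y)_a\geq d(x,a)-d(x,y)$ (a slightly sharper version of your final ``cruder'' alternative), summing over spheres around $x$ to get the threshold $q_0=\ln(k)/\eps$, i.e.\ your $\omega/\eps$.
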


\begin{rmq}
 Notice that, stated this way, $\P$ is not a set of labelling functions on $\G$. Implicitely, we do the following identification :
 $$\{(a,b) \in \G \times \G \st d(a,b)\leq 10\d \} \sim \{x \mapsto h(a,x)(b) \st (a,b) \in \G^2 \text{ with }d(a,b)\leq 10\d \}.$$
 In fact, $x \mapsto h(a,x)(b)$ is uniquely determined by the pair $(a,b)$. 
\end{rmq}

\begin{proof}[Proof of Proposition \ref{hyp_labpart}]
 We fix a finite generating set of $\G$ and we denote $d$ the word metric associated with it (and such that $\G$ is Gromov hyperbolic of constant $\d$ with respect to $d$). As $\G$ is uniformly locally finite, there exists a constant $k>0$ such that, for all $r>0$ and $x \in \G$, $\#B(x,r)\leq k^r$. \\
 
 Let $\eps$ be as in 2. Lemma \ref{hyp_lem} and set $q_0 = \frac{\text{ln}(k)}{\eps}$. Let $q > q_0$. Then for all $q > q_0$, 
 $$ \sum_{n \in \N}k^ne^{-nq\eps} < +\infty. $$
 Let $h$ be the function given by Lemma \ref{hyp_lem} and notice that, for $x,x',a \in \G$, since $\# \supp(h(x,a)) \leq k$, 
 $$ \|h(x,a)-h(x',a)\|_q \leq 2k^{\frac{1}{q}}\|h(x,a)-h(x',a)\|_1. \quad \quad (*)$$
 As said in the previous remark, we can see $\P$ as a set of labelling functions on $\G$ using the function $h$ : we set, for $(a,b) \in \P$ and $x \in \G$, 
 $$(a,b)(x):=h(x,a)(b).$$
 We denote by $c$ the separation map associated with $\P$. We have, for $x,x' \in \G$, 
 \begin{center}
  \begin{tabular}{rcl}
   $\displaystyle \|c(x,x')\|_{\ell^q(\P)}^q$&$=$&$\displaystyle \sum_{(a,b) \in \P}|h(x,a)(b)-h(x,a)(b)|^q,$ \smallskip \\
   &$=$&$\displaystyle \sum_{a \in \G}\|h(x,a)-h(x,a)\|_q^q$ by 1. Lemma \ref{hyp_lem}, \smallskip \\
   &$\leq$&$\displaystyle \sum_{a \in \G}2^q k\|h(x,a)-h(x,a)\|_1^q$ by $(*)$, \smallskip \\
   &$\leq$&$\displaystyle  (2C)^qk\sum_{a \in \G} e^{-q\eps (x|x')_a}$ by 2. Lemma \ref{hyp_lem}, \smallskip \\
   &$\leq$&$\displaystyle  (2C)^qk\sum_{a \in \G} e^{-q\eps (d(x,a)-d(x,x'))}$, \smallskip \\
   &$\leq$&$\displaystyle (2C)^qk\sum_{n \in \N} k^ne^{-q\eps (n-d(x,x'))}$, and hence, since $q>q_0$:\smallskip \\
   $\displaystyle \|c(x,x')\|_{\ell^p(\P)}^p$&$\leq$&$\displaystyle (2C)^q e^{q\eps d(x,x')}< +\infty$  \smallskip \\
  \end{tabular}
 \end{center}
 Thus $c(x,x')$ belongs to $\ell^p(\P)$ for all $x,x' \in \G$.
 It follows that $(\G,\P,\ell^p(\P))$ is a space with labelled partitions.
\end{proof}

\begin{prop}\label{hyp_actionprop}
  Let $\G$ be a finitely generated $\d$-hyperbolic group. Let $q_0 \geq 1$ as in Proposition \ref{hyp_labpart} and for $q >q_0$, let $(\G,\P,\ell^q(\P))$ be the space with labelled partitions given by Proposition \ref{hyp_labpart}.
  Then the action of $\G$ by left-translation on itself induces a proper action of $\G$ by automorphisms on $(\G,\P,\ell^q(\P))$.
 \end{prop}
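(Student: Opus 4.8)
The plan is to check the two requirements separately: first, that each left translation $\tau(g):x\mapsto gx$ is an automorphism of the space with labelled partitions $(\G,\P,\ell^q(\P))$, and then that the induced isometric action on $(\G,d)$ is metrically proper, which by Proposition \ref{labpart_proper_prop} is exactly what ``proper'' means here. Continuity will be automatic since $\G$ is discrete.

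For the automorphism property, the key is the $\G$-equivariance of $h$ from Lemma \ref{hyp_lem}, read as $h(gx,ga)(y)=h(x,a)(g^{-1}y)$ for the left regular representation. I would compute $\Phi_{\tau(g)}$ explicitly: for $(a,b)\in\P$ and $x\in\G$,
\[
\big((a,b)\circ\tau(g)\big)(x)=h(gx,a)(b)=h\big(x,g^{-1}a\big)\big(g^{-1}b\big)=(g^{-1}a,g^{-1}b)(x),
\]
so that $\Phi_{\tau(g)}(a,b)=(g^{-1}a,g^{-1}b)$. Since left translation is an isometry of the word metric, $d(g^{-1}a,g^{-1}b)=d(a,b)\le 10\d$, so $\Phi_{\tau(g)}$ maps $\P$ into $\P$; being diagonal left translation by $g^{-1}$, it is a bijection of $\P$ with inverse $\Phi_{\tau(g^{-1})}$. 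Condition 1 of Definition \ref{labpart_homo} is thus satisfied, and condition 2 follows because precomposing an element of $\ell^q(\P)$ with a bijection of the index set $\P$ preserves the $\ell^q$-norm. Hence every $\tau(g)$ is an automorphism, its inverse $\tau(g^{-1})$ being of the same form.

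For properness, by Proposition \ref{labpart_proper_prop} it suffices to show $\|c(g,e)\|_{\ell^q(\P)}\to+\infty$ as $g\to\infty$, taking base point $x_0=e$. The idea is to keep only the summands coming from those $a$ for which $h(g,a)$ and $h(e,a)$ have disjoint support: for such $a$,
\[
\|h(g,a)-h(e,a)\|_q^q=\|h(g,a)\|_q^q+\|h(e,a)\|_q^q.
\]
Each term here is bounded below uniformly: since $\|h(x,a)\|_1=1$ and, by Property 1 of Lemma \ref{hyp_lem} together with uniform local finiteness, $\#\su h(x,a)\le \#B(a,10\d)\le k^{10\d}$, Hölder's inequality gives $\|h(x,a)\|_q\ge k^{-10\d(1-1/q)}=:c_0>0$. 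Property 3 of Lemma \ref{hyp_lem} guarantees at least $d(g,e)-K$ such $a$ once $d(g,e)$ is large, whence
\[
\|c(g,e)\|_{\ell^q(\P)}^q\ \ge\ 2c_0^q\,\big(d(g,e)-K\big)\ \xrightarrow[g\to\infty]{}\ +\infty,
\]
using that $d(g,e)=|g|\to\infty$ as $g$ leaves finite subsets of $\G$. This establishes metric properness.

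The main obstacle is precisely this uniform lower bound on $\|h(x,a)\|_q$: properness is driven entirely by Property 3, which only produces disjointly supported pairs, so everything hinges on showing that a disjoint pair contributes a fixed positive amount to the $\ell^q$-sum. This is where both the $\ell^1$-normalization of $h$ and the uniform bound on support sizes (from Property 1 and local finiteness) are essential — without the support bound the constant $c_0$ could degenerate to $0$, and the lower bound would collapse. The remaining steps are bookkeeping: matching the exponent $q$ with the conventions of the separation map, and checking that the equivariance computation indeed lands inside $\P$.
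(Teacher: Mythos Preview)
Your proof is correct and follows essentially the same route as the paper: the automorphism check via the equivariance identity $\Phi_{\tau(g)}(a,b)=(g^{-1}a,g^{-1}b)$ is identical, and properness is established in the same way by restricting to the set of $a$ with $\su h(g,a)\cap\su h(e,a)=\emptyset$ and using Property~3 of Lemma~\ref{hyp_lem} together with a uniform lower bound on $\|h(x,a)\|_q$. Your derivation of that lower bound via H\"older and the support estimate $\#\su h(x,a)\le k^{10\d}$ is in fact slightly more explicit than the paper's, which simply asserts $\|h(x,a)\|_q\ge 1/k$.
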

 
 \begin{proof}
  We keep the notations used in the proof of Proposition \ref{hyp_labpart}. We first show that $\G$ acts by automorphisms on $(\G,\P,\ell^q(\P))$.
  Let $\g,x \in \G$ and $(a,b) \in \P$. Since $h$ is $\G$-equivariant, we have:
  $$\Phi_{\g}((a,b))(x)=(a,b)(\g x)=h(\g x,a)(b)=h(x,\g^{-1}a)(\g^{-1}b)=(\g^{-1}a,\g^{-1}b)(x),$$
  And hence, $$\Phi_{\g}((a,b))=(\g^{-1}a,\g^{-1}b) \in \P.$$
  Moreover, for $\k \in \ell^q(\P)$, we have:
  \begin{center}
   \begin{tabular}{rl}
    $\|\k \circ \Phi_{\g}\|_{_{\ell^q(\P)}}^q $&$\displaystyle=\sum_{(a,b)\in \P}|\k(\g^{-1}a,\g^{-1}b)|^q$, \smallskip \\
    &$\displaystyle=\sum_{(\g a,\g b)\in \P}|\k(a,b)|^q$, \smallskip \\
    &$\displaystyle=\sum_{(a,b)\in \P}|\k(a,b)|^q$, \smallskip \\
    $\|\k \circ \Phi_{\g}\|_{_{\ell^q(\P)}}^q $&$=\|\k\|_{_{\ell^q(\P)}}^q$.
   \end{tabular}
  \end{center}
  It follows that $\G$ acts by automorphisms on $(\G,\P,\ell^q(\P))$. \medskip \\
  Now, consider the identity element $e$ of $\G$ and let $\g \in \G$. \\
  We denote $A=\{ a \in \G \st \su h(\g,a) \cap \su h(e,a)=\emptyset \}$. Notice that for every $x,a \in \G$, $\|h(x,a)\|_q\geq \frac{1}{k}$. We have, by 3. Lemma \ref{hyp_lem}, when $d(\g,e)$ is large enough:
  \begin{center}
   \begin{tabular}{rcl}
    $\|c(\g,e)\|_{_{\ell^q(\P)}}^q $&$=$&$\displaystyle\sum_{a\in \G}\|h(\g,a)-h(e,a)\|_q^q$, \smallskip \\
    &$\geq$&$\displaystyle \sum_{a\in A}\|h(\g,a)-h(e,a)\|_q^q\geq\sum_{a\in A}(\frac{2}{k})^q$, since $\|h(x,a)\|_q\geq \frac{1}{k}$ \medskip \\
    $\|c(\g,e)\|_{_{\ell^q(\P)}}^q $&$\geq$&$\displaystyle (\frac{2}{k})^q(d(\g,e)-K)$.
   \end{tabular}
  \end{center}
And hence, when $\g \rightarrow \infty$ in $\G$, we have: $\|c(\g,e)\|_{_{\ell^q(\P)}}^q\geq (\frac{2}{k})^q(d(\g,e)-K)\rightarrow +\infty$. 
 \end{proof}
 
 \subsubsection{Labelled partitions on metric spaces}
 
 It turns out that any pseudo-metric spaces $(X,d)$ can be realized as a space with labelled partitions $(X,\P,F(\P))$ with $F(\P)\simeq \ell^{\infty}(X)$ and such that the pseudo-metric of labelled partitions is exactly $d$ :
 \begin{prop}
  Let $(X,d)$ be a pseudo-metric space and consider the family of labelling functions on $X$ : 
  $$\P=\{ p_z:x \mapsto d(x,z) \st z \in X\}.$$
  Then $(X,\P,\ell^{\infty}(\P))$ is a space with labelled partitions. \smallskip \\
  Moreover, for all $x,y \in X$,
  $$ d_{\P}(x,y)=d(x,y), $$
  where $d_{\P}$ is the pseudo-metric of labelled partitions on $X$.
 \end{prop}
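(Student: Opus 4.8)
The plan is to verify the two defining conditions of Definition \ref{labpart_space}: first that $\P=\{p_z\}_{z\in X}$ is a genuine family of labelling functions (each $p_z$ maps $X$ to $\K=\R$, which is immediate since $d$ takes values in $\R_+$), and second that for every pair $x,y\in X$ the separation map $c(x,y)$ lands in the semi-normed space $\F(\P)=\ell^\infty(\P)$. Recall that $c(x,y)(p_z)=p_z(x)-p_z(y)=d(x,z)-d(y,z)$, so the core estimate I would establish is the bound
\begin{equation*}
  |c(x,y)(p_z)|=|d(x,z)-d(y,z)|\leq d(x,y)\qquad\text{for all }z\in X,
\end{equation*}
which is exactly the reverse triangle inequality for the pseudo-metric $d$. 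This shows $c(x,y)\in\ell^\infty(\P)$ with $\|c(x,y)\|_\infty\leq d(x,y)$, so the first part of the statement follows.

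For the moreover clause I must upgrade this inequality to an equality $d_\P(x,y)=\|c(x,y)\|_\infty=d(x,y)$. The upper bound is already in hand, so I only need a matching lower bound, and this is where one exhibits a specific index $z$ realizing (or approaching) the supremum. The natural choice is $z=y$: then $c(x,y)(p_y)=d(x,y)-d(y,y)=d(x,y)$, so the supremum defining the $\ell^\infty$-norm is attained and equals $d(x,y)$ exactly. I would remark that the same computation with $z=x$ gives $|c(x,y)(p_x)|=d(x,y)$ as well, so the bound is sharp from both sides.

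Finally I would note the compatibility of the semi-norm with the Banach-space requirement of Definition \ref{labpart_space}: the space $\F(\P)=\ell^\infty(\P)$ carries the genuine norm $\|\cdot\|_\infty$, so its null subspace $\F(\P)_0$ is trivial and the quotient $F(\P)$ is just $\ell^\infty(\P)$ itself, which is a Banach space; hence all hypotheses of the definition are met and $(X,\P,\ell^\infty(\P))$ is legitimately a space with labelled partitions. I do not expect any genuine obstacle here: the only quantitative content is the reverse triangle inequality, and the identification $d_\P=d$ reduces to evaluating the separation map at the distinguished point $z=y$. The one point meriting a word of care is that $d$ is merely a pseudo-metric rather than a metric, but since every step uses only symmetry and the triangle inequality — never separation of points — the argument goes through verbatim, and indeed $d_\P$ inherits being a pseudo-metric exactly as $d$ is.
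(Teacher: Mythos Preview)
Your proposal is correct and follows essentially the same approach as the paper: compute $c(x,y)(p_z)=d(x,z)-d(y,z)$, bound its absolute value by $d(x,y)$ via the (reverse) triangle inequality, and then attain the bound by taking $z=y$. Your write-up is in fact a bit more thorough than the paper's, since you also verify explicitly that $\ell^\infty(\P)$ satisfies the Banach-space hypothesis of Definition~\ref{labpart_space} and note that only the triangle inequality (not separation of points) is used.
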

 
 \begin{proof}
  Let $c$ be the separation map associated with $\P=\{ p_z:x \mapsto d(x,z) \st z \in X\}$. For $x,y \in X$ and $p_z \in \P$,  we have :
  $$ c(x,y)(p_z)=p_z(x)-p_z(y)=d(x,z)-d(y,z)\leq d(x,y),$$
  and, in particular, $c(x,y)(p_y)=d(x,y)$, then, 
  $$ \|c(x,y)\|_{\infty}=\sup_{p_z \in \P}|c(x,y)(p_z)|=d(x,y). $$
  Hence, $(X,\P,\ell^{\infty}(\P))$ is a space with labelled partitions and $d_{\P}(x,y)=\|c(x,y)\|_{\infty}=d(x,y)$.  
 \end{proof}
 This result motivates the study of structures of spaces labelled partitions on a pseudo-metric space $X$ : can we find other Banach spaces than $\ell^{\infty}(X)$ which gives a realization of the pseudo-metric on $X$ as a pseudo-metric of labelled partitions ? \\

A first element of answer is given by the case of the discrete metric on a set. On every set, we can define a structure of labelled partitions which gives the discrete metric on this set:

\begin{prop}\label{naivelabpart_prop}
  Let $X$ be a set and $\P=\{ \di_x \st x \in X\}$ be the family of labelling functions where, for $x \in X$, $\di_x=2^{-\frac{1}{q}}\d_x$. \\
  Then, for every $q \geq 1$, $(X,\P,\ell^q(\P))$ is a space with labelled partitions.
\end{prop}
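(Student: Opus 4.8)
The plan is to verify the single defining condition of Definition \ref{labpart_space}: that for every pair $x,y \in X$ the separation map $c(x,y):\P \rightarrow \K$ lies in $\ell^q(\P)$, i.e. is $q$-summable for the counting measure on $\P$. Since $\ell^q(\P)$ with the counting-measure norm is already a genuine Banach space, the subspace $\F(\P)_0$ of Definition \ref{labpart_space} is trivial and no quotient has to be taken; the only thing that needs checking is the finiteness condition. I would first record that $x \mapsto \di_x = 2^{-\frac{1}{q}}\d_x$ is injective, so that summing over $p \in \P$ is the same as summing over $z \in X$ via the identification $p = \di_z$.

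The core step is a direct evaluation of the separation map. For $x,y,z \in X$ one has $c(x,y)(\di_z) = \di_z(x) - \di_z(y) = 2^{-\frac{1}{q}}\big(\d_z(x) - \d_z(y)\big)$, which vanishes unless $z \in \{x,y\}$. When $x \neq y$ it takes the value $2^{-\frac{1}{q}}$ at $z = x$, the value $-2^{-\frac{1}{q}}$ at $z = y$, and $0$ elsewhere, while for $x = y$ it is identically $0$. Thus $c(x,y)$ is supported on at most two elements of $\P$.

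It then remains to compute the norm. For $x \neq y$, $\|c(x,y)\|_q^q = (2^{-\frac{1}{q}})^q + (2^{-\frac{1}{q}})^q = 2^{-1} + 2^{-1} = 1$, and $\|c(x,y)\|_q = 0$ for $x = y$. In particular $c(x,y) \in \ell^q(\P)$ for all $x,y \in X$, which is exactly the requirement of Definition \ref{labpart_space}, so $(X,\P,\ell^q(\P))$ is a space with labelled partitions. As a byproduct the associated labelled partitions pseudo-metric is $d(x,y) = \|c(x,y)\|_q$, equal to $1$ when $x \neq y$ and $0$ otherwise, i.e. the discrete metric announced in the surrounding text; the normalizing factor $2^{-\frac{1}{q}}$ is chosen precisely so that the two-point $q$-th power sum telescopes to $1$ rather than to the raw value $2$.

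I expect no genuine obstacle: the entire content is the two-point support computation, and the only point requiring care is the bookkeeping of the normalization constant in the final norm calculation.
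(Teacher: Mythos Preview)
Your proposal is correct and follows essentially the same approach as the paper: compute $c(x,y)(\di_z)=\di_z(x)-\di_z(y)$, observe it is supported on $\{\di_x,\di_y\}$ with values $\pm 2^{-1/q}$ when $x\neq y$, and conclude $\|c(x,y)\|_q^q=1$. Your additional remarks on injectivity of $z\mapsto\di_z$, the triviality of the null subspace of $\ell^q(\P)$, and the explicit treatment of the case $x=y$ are welcome clarifications but do not change the argument.
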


\begin{proof}
We have, for $x,y,z \in X$ with $x\neq y$: \\
 
 \begin{equation*}
   c(x,y)(\di_z)=\di_z(x)-\di_z(y)=\begin{cases} 0 \text{ if } z\notin \{x,y\}  \\
                            \pm 2^{-\frac{1}{q}} \text{ otherwise.}
                           \end{cases}
 \end{equation*}
and then,

$$ \|c(x,y)\|^q_q=\sum_{z \in X}|c(x,y)(\di_z)|^q=|c(x,y)(\di_x)|^q+|c(x,y)(\di_y)|^q=1. $$
\end{proof}

Notice that the labelled partitions pseudo-metric $d$ on $X$ in this case is precisely the discrete metric on $X$ i.e. $d(x,y)=1$ for all $x,y \in X$, $x \neq y$.

\begin{df}[Naive $\ell^q$ space with labelled partitions]\label{naivelabpart}
 Let $X$ be a set and $\P=\{ \di_x \st x \in X\}$.\smallskip \\ For $q \geq 1$, $(X,\P,\ell^q(\P))$ is called the \emph{naive $\ell^q$ space with labelled partitions of $X$}.
\end{df}

 \begin{rmq}\label{labpart_homo_ex}
  Let $X$ be a set, $q \geq 1$ and $G$ a group acting on $X$. Then $G$ acts by automorphisms on the naive $\ell^q$ space with labelled partitions of $X$. \smallskip \\
  In fact, if, for $g \in G$, we denote $\tau(g): x \mapsto gx$, we have, for $z \in X$, 
  $$\di_z \circ \tau(g) = \di_{g^{-1}z} \in \P,$$
  and, for all $\k \in \ell^q(\P)$, 
  $$\|\k\circ \Phi_{\tau(g)}\|^q_q=\sum_{x\in X}|\k(\di_{gx})|^q=\sum_{g^{-1}x\in X}|\k(\di_{x})|^q=\sum_{x\in X}|\k(\di_{x})|^q=\|\k\|_q^q.$$
 \end{rmq}
 
 \subsubsection{Labelled partitions on Banach spaces}
Every Banach space has a natural structure of space with labelled partitions and the metric of labelled partitions of this structure is exactly the metric induced by the norm. \\
Let $f$ be a $\K$-valued function on a set $B$ and $k \in \K$. We denote $f+k:=\{x \mapsto f(x)+k\}$.

\begin{df}
 Let $B$ be a Banach space and $B'$ be its topological dual. The set : 
 $$ \P=\{f+k \st f \in B', \; k \in \K\}$$ 
 is called the \emph{natural family of labelling functions} on $B$. \\
 Let $c$ be the separation map on $B$ associated with $\P$. We denote :
 $$ \d(\P)=\{c(x,x')\st x,x' \in B \}. $$
\end{df}

\begin{rmq}
This definition and the fact that the natural family of labelling functions contains the constant functions are motivated by the following : as we shall see in Lemma \ref{actcanbanach}, a $G$-action on a Banach space $B$ by affine isometries induces an action of $G$ on the natural family of labelling functions on $B$.
\end{rmq}

\begin{prop}\label{banachspacelabpart}
Let $(B,\|\cdotp\|)$ be a Banach space and $\P$ be its natural family of labelling functions. Then $\d(\P)$ is isomorphic to $B$ and $(B,\P,\d(\P))$ is a space with labelled partitions where $\d(B)$ is viewed as an isometric copy of $B$.
Moreover, we have, for $x,x' \in B$ :
$$ d(x,x')=\|x-x'\|, $$
where $d$ is the pseudo-metric of labelled partitions on $(B,\P,\d(\P))$.
\end{prop}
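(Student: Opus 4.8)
The plan is to exhibit an explicit isometric isomorphism between $B$ and $\d(\P)$ and to read off every assertion from it. The starting observation is that each labelling function $p=f+k \in \P$ determines its linear part uniquely: evaluating at $0$ gives $p(0)=f(0)+k=k$, so $f=p-p(0)$. Consequently, for $x,x' \in B$ the separation map satisfies $c(x,x')(p)=p(x)-p(x')=f(x-x')$, which depends on the pair $(x,x')$ only through the difference $x-x'$ and is insensitive to the constant $k$. This suggests defining, for $v \in B$, the function $\Psi(v) \in \F(\P,\K)$ by $\Psi(v)(p)=\big(p-p(0)\big)(v)$; with this notation one has $c(x,x')=\Psi(x-x')$ for all $x,x'$, and hence $\d(\P)=\Psi(B)$.

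Next I would check that $\Psi$ is a linear bijection from $B$ onto $\d(\P)$. Linearity in $v$ is immediate, since for each fixed $p$ the map $v \mapsto \big(p-p(0)\big)(v)$ is a linear functional. Injectivity is the one genuinely non-formal point: if $\Psi(v)=0$, then taking $p=f$ (i.e. $k=0$) gives $f(v)=0$ for every $f \in B'$, and the Hahn-Banach theorem guarantees that $B'$ separates the points of $B$, forcing $v=0$. Surjectivity onto $\d(\P)$ holds by construction, since any $v$ equals $x-x'$ with $x=v$, $x'=0$, whence $\Psi(v)=c(v,0) \in \d(\P)$. In particular $\d(\P)=\Psi(B)$ is a linear subspace of $\F(\P,\K)$, so no passage to a span or closure is needed.

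I then transport the norm of $B$ along $\Psi$, setting $\|\Psi(v)\|_{\d(\P)}:=\|v\|_B$, which is well defined precisely because $\Psi$ is injective. With this norm $\Psi$ becomes a surjective linear isometry, so $\d(\P)$ is a Banach space isometrically isomorphic to $B$; moreover its null subspace $\{\k \st \|\k\|=0\}$ is trivial, so one may take $\F(\P)=F(\P)=\d(\P)$ in the sense of Definition \ref{labpart_space}. The membership $c(x,y)\in \d(\P)$ is automatic from $\d(\P)=\Psi(B)$, so $(B,\P,\d(\P))$ is a space with labelled partitions, and for $x,y \in B$ the labelled partitions pseudo-metric is $d(x,y)=\|c(x,y)\|_{\d(\P)}=\|\Psi(x-y)\|_{\d(\P)}=\|x-y\|$, as claimed. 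The only obstacle beyond routine verification is the appeal to Hahn-Banach to secure injectivity of $\Psi$ (equivalently, to ensure that the transported functional is a norm and not merely a seminorm); everything else is bookkeeping.
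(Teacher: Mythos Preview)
Your proof is correct and essentially identical to the paper's: your map $\Psi$ coincides with the paper's map $T:x\mapsto c(x,0)$ (since $c(v,0)(p)=p(v)-p(0)=(p-p(0))(v)$), and both arguments hinge on Hahn--Banach for injectivity before transporting the norm of $B$ along this bijection. The only difference is cosmetic: you explicitly isolate the linear part $f=p-p(0)$, while the paper works directly with $c(\cdot,0)$.
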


\begin{proof}
Let $\P:=\{f+k \st f \in B', \; k \in \K\}$ and let $c$ be the separation map on $B$ associated with $\P$. Notice that for all $x,x' \in B$, $c(x-x',0)=c(x,0)-c(x',0)=c(x,x')$. Then the map $T: B \rightarrow \d(B)$ such that $x \mapsto c(x,0)$ is clearly a surjective linear operator. Now, we have $c(x,0)=0\Leftrightarrow \forall f \in B', \; f(x)=0$, and hence, by Hahn-Banach Theorem, $T$ is injective. It follows that $T$ is an isomorphism. \\
The quantity $\|c(x,x')\|_{_{\d(\P)}}:=\|x-x'\|$ defines a norm on $\d(\P)$ and hence, $(\d(\P),\|.\|_{_{\d(\P)}})$ is a Banach space as $T$ is an isometric isomorphism. It follows immediately that $(B,\P,\d(\P))$ is a space with labelled partitions.
\end{proof}

\begin{df}
 Let $B$ be a Banach space. The space with labelled partitions $(B,\P,\d(\P))$ where $\P=\{f+k \st f \in B', \; k \in \K\}$ and $\d(\P)\simeq B$ is called the \emph{natural structure of labelled partitions on $B$}.
\end{df}

\begin{lem}\label{actcanbanach}
 Let $G$ be a topological group. Then a continuous isometric affine action of $G$ on a Banach space $B$ induces a continuous action of $G$ by automorphisms on the natural space with labelled partitions $(B,\P, \d(\P))$ on $B$.
\end{lem}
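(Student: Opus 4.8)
The plan is to set $\tau(g)=\a(g)$ and to verify that each $\tau(g)$ satisfies the two conditions of Definition \ref{labpart_homo}, and then to assemble these maps into a continuous action by automorphisms. I write $\a(g)x=\pi(g)x+b(g)$ as in the cocycle description, with $\pi$ a strongly continuous isometric representation and $b$ the associated $1$-cocycle.

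First I would check condition 1, the stability of $\P$ under $\Phi_{\tau(g)}$. Taking a labelling function $p=f+k$ with $f\in B'$ and $k\in\K$, I compute
$$ (p\circ\tau(g))(x)=f(\pi(g)x+b(g))+k=(f\circ\pi(g))(x)+\big(f(b(g))+k\big). $$
Since $\pi(g)$ is a bounded operator, $f\circ\pi(g)$ lies in $B'$, and $f(b(g))+k$ is a constant in $\K$; hence $\Phi_{\tau(g)}(p)=(f\circ\pi(g))+(f(b(g))+k)$ belongs to $\P$. This is precisely the step where the presence of the constant functions in the natural family $\P$ is essential: the affine part $b(g)$ of the action produces a constant term that must be absorbed by $k$. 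I expect this to be the only genuinely delicate point of the proof.

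Next I would establish the identity $c(x,x')\circ\Phi_{\tau(g)}=c(\tau(g)x,\tau(g)x')$, exactly as in Remark \ref{labpart_homo_rmq}: evaluating at $p\in\P$ gives $(c(x,x')\circ\Phi_{\tau(g)})(p)=p(\tau(g)x)-p(\tau(g)x')$. This already shows $c(x,x')\circ\Phi_{\tau(g)}\in\d(\P)$, which is the membership part of condition 2. For the norm, I use that $\tau(g)=\a(g)$ is an affine isometry, so that $\a(g)x-\a(g)x'=\pi(g)(x-x')$ and, via the identification of the $\d(\P)$-norm with the norm of $B$ from Proposition \ref{banachspacelabpart},
$$ \|c(x,x')\circ\Phi_{\tau(g)}\|_{_{\d(\P)}}=\|\a(g)x-\a(g)x'\|=\|x-x'\|=\|c(x,x')\|_{_{\d(\P)}}. $$
Thus each $\tau(g)$ is a homomorphism of spaces with labelled partitions; applying the same reasoning to $\tau(g^{-1})=\tau(g)^{-1}$ shows that $\tau(g)$ is in fact an automorphism.

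Finally I would assemble the action. Since $\a$ is a group morphism, $\tau(g_1g_2)=\tau(g_1)\tau(g_2)$, so $G$ acts by automorphisms on $(B,\P,\d(\P))$. For continuity, I recall that the labelled partitions pseudo-metric satisfies $d(x,x')=\|x-x'\|$, so the $G$-action on $(B,d)$ coincides with the action on the normed space $B$; the strong continuity of $\a$ then gives that $g\mapsto\tau(g)x$ is continuous for each fixed $x$, which is exactly the continuity required by Definition \ref{labpart_propertiesaction}.
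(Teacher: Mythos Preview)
Your proof is correct and follows essentially the same approach as the paper's: both verify stability of $\P$ under $\Phi_{\tau(g)}$ via the computation $p\circ\a(g)=f\circ\pi(g)+(k+f(b(g)))$, then check the isometry condition on $\d(\P)$ using $\|\a(g)x-\a(g)x'\|=\|\pi(g)(x-x')\|=\|x-x'\|$, and finally invoke $d(x,x')=\|x-x'\|$ for continuity. Your version is in fact slightly more careful in explicitly noting that $\tau(g^{-1})$ is also a homomorphism (so that $\tau(g)$ is an automorphism) and that $\tau$ is a group morphism, points the paper leaves implicit.
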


\begin{proof}
 Let $\a$ be a continuous isometric affine action of $G$ on a Banach space $B$ with linear part $\pi$ and translation part $b$. Let $(B,\P,\d(\P))$ be the natural space with labelled partitions on $B$. \\
 Notice that for all $f \in B'$, $f \circ \pi(g) \in B'$ since $\pi$ is an isometric representation. Hence, for all $g \in G$ and $p=f+k \in \P$ :
 $$p \circ \a(g)=f\circ \a(g)+k=f \circ \pi(g) + (k +f(b(g))) \in \P.$$
 We denote, for $g \in G$ and $p \in \P$, $\Phi_g(p)=p \circ \a(g)$. We have, for $g \in G$ and $c(x,x') \in \d(\P)$,
 \begin{center}
  \begin{tabular}{rcl}
   $ \|c(x,x')\circ \Phi_g\|_{_{\d(\P)}}$&$=$&$ \|c(\a(g)x,\a(g)x')\|_{_{\d(\P)}},$ \\
   &$=$&$ \|\a(g)x-\a(g)x'\|,$ \\
   &$=$&$ \|\pi(g)(x-x')\|,$ \\
   &$=$&$ \|x-x'\|,$ \\
   $ \|c(x,x')\circ \Phi_g\|_{_{\d(\P)}}$&$=$&$ \|c(x,x')\|_{_{\d(\P)}}.$
  \end{tabular}
 \end{center}
It follows that $G$ acts by automorphisms on $(B,\P, \d(\P))$ and this action is clearly continuous since $d(x,x')=\|x-x'\|$ where $d$ is the pseudo-metric of labelled partitions.
\end{proof}

In the particular case of a real Banach space $B$, we can consider another family of labelling functions on $B$ which is composed of functions valued in $\{0,1\}$; hence, it can be tought as characteristic functions of half spaces of the real Banach space $B$ :

\begin{dfpr}
 Let $B$ be a \emph{real} Banach space, $B'$ be its topological dual and $SB'$ its unit sphere (for the operator norm). For $f \in SB'$ and $k \in \R$, we define the function $p_{f,k}: B \rightarrow \{0,1\}$ by, for $x \in B$ :
  \begin{equation*}
   p_{f,k}(x)=\begin{cases} 1 \text{ if } f(x)-k >0  \\
                            0 \text{ otherwise.}
                           \end{cases}
 \end{equation*}
 We set $\P=\{p_{f,k} \st f \in SB',\; k \in \R \}$ and $\F(\P)=\Vect(c(x,y) \st x,y \in B)$ where $c$ is the separation map associated with $\P$.
 Then, for $\k \in \F(\P)$, the quantity $$\|\k\|:= \sup_{f \in SB'}\left|\int_{\R}\k(p_{f,k})dk\right|$$ is a semi-norm on $\F(\P)$ and $F(\P)=\F(\P)/\{\k \st \|\k\|=0\}$ is a Banach space isometrically isomorphic to $B$. Moreover, $(B,\P,F(\P))$ is a space with labelled partitions and we have, for all $x,y \in B$ :
 $$ d_{\P}(x,y)=\|c(x,y)\|=\|x-y\|_B. $$
\end{dfpr}

\begin{proof}
 First, notice that for $x,y \in B$ and $p_{f,k} \in \P$, we have :
 \begin{equation*}
   c(x,y)(p_{f,k})=p_{f,k}(x)-p_{f,k}(y)=\begin{cases} \pm 1 \text{ if } f(x)>k\geq f(y) \text{ or } f(y)>k\geq f(x) \\
                            0 \text{ otherwise.}
                           \end{cases}
 \end{equation*}
 Hence, for $f \in SB'$,
 \begin{equation*}
   \int_{\R}c(x,y)(p_{f,k})dk=\begin{cases} f(x)-f(y) \text{ if } f(x)\geq f(y) \\
                            f(y)-f(x) \text{ if } f(y)> f(x)
                           \end{cases}
 \end{equation*}
 It follows that $\|c(x,y)\|=\sup_{f \in SB'}|f(x)-f(y)|=\|x-y\|_B <+\infty$ $(*)$.\smallskip \\
 Now, for $\k=\sum \l_i c(x_i,y_i) \in \F(\P)$, $\|k\| \leq \sum |\l_i|\;\| c(x_i,y_i)\| < +\infty $, and then, $\|.\|$ is a semi-norm on $\F(\P)$. \\
 Let us now consider the quotient $F(\P)=\F(\P)/\sim$ where $\k\sim \k'$ if, and only if $\|\k-\k'\|=0$. For $\l,\mu \in \R$ and $x,y \in B$, we have $c(\l x +\mu y, 0) \sim \l c(x,0) + \mu c(y,0)$ and $c(x-y,0)\sim c(x,y)$. Thus, $T: B \rightarrow F(\P)$ such that $T(x)=c(x,0)$ is an isomorphism and by $(*)$ it is isometric. Hence, $F(\P)$ is a Banach space isometrically isomorphic to $B$ and $(B,\P,F(\P))$ is a space with labelled partitions. 
\end{proof}

 \subsection{Link with isometric affine actions on Banach spaces}\label{subsec_action}
 
  In this section, we aim to prove the two statements of Theorem \ref{labpart_affact} which gives an analog of the equivalence between proper actions on spaces with measured walls and Haagerup property in terms of proper actions on spaces with labelled partitions and isometric affine actions on Banach spaces; and more particularly in the case of $L^p$ spaces, using Hardin's result about extension of isometries on closed subspaces of $L^p$ spaces. \bigskip \\
  
  \begin{noth}[\ref{labpart_affact}]
   Let $G$ be topological group.
   \begin{enumerate}
    \item[1.] If $G$ acts (resp. acts properly) continuously by affine isometries on a Banach space $B$ then there exists a structure $(G,\P,F(\P))$ of space with labelled partitions on $G$ such that $G$ acts (resp. acts properly) continuously by automorphisms on $(G,\P,F(\P))$ via its left-action on itself. Moreover, there exists a linear isometric embedding $F(\P) \hookrightarrow B$.
    \item[2.] If $G$ acts (resp. acts properly) continuously by automorphisms on a space with labelled partitions $(X,\P,F(\P))$ then there exists a (resp. proper) continuous isometric affine action of $G$ on a Banach space $B$. Moreover, $B$ is a closed subspace of $F(\P)$.
   \end{enumerate}
  \end{noth}
  
   \begin{nocor}[\ref{labpart_aflp}]
  Let $p\geq 1$ with $p \notin 2\Z\smallsetminus \{2\}$ and $G$ be a topological group. $G$ has property $PL^p$ if, and only if, $G$ acts properly continuously by automorphisms on a space with labelled partitions $(X,\P,F(\P))$ where $F(\P)$ is isometrically isomorphic to a closed subspace of an $L^p$ space. 
 \end{nocor}
 
 \begin{proof}[Proof of Corollary \ref{labpart_aflp}]
   The direct implication follows immediately from 1) Theorem \ref{labpart_affact}. \smallskip \\
   Now, assume $G$ acts properly continuously by automorphisms on a space $(X,\P,F(\P))$ and $T:F(\P)\hookrightarrow L^p(X,\mu)$ is a linear isometric embedding. \\
   By 2) Theorem \ref{labpart_affact}, there is a proper continuous isometric affine action $\a$ of $G$ on a closed subspace $B$ of $F(\P)$ with $\a(g)=\pi(g)+b(g)$. Thus, as $T$ is a linear isometry, $T(B)$ is a closed subspace of $L^p(X,\mu)$ and $\a'$ such that $\a'(g)=T\circ\pi(g)\circ T^{-1}+T(b(g))$ is a continuous isometric affine action of $G$ on $T(B)$. Then, by Corollary \ref{hardin_lp_cor}, $G$ has property $PL^p$. 
 \end{proof}
 
  \subsubsection{Labelled partitions associated with an isometric affine action}\label{subsubsec_actionlab}
 In this part, we introduce the space with labelled partitions associated with a continuous isometric affine action of a topological group $G$ and we give a proof of 1) Theorem \ref{labpart_affact} by defining an action of $G$ by automorphisms on this structure. \medskip \\ 

Given a continuous isometric affine action on a Banach space, we consider the pullback of the natural structure of space with labelled partitions of the Banach space on the group itself :

\begin{df}\label{labpart_alpha_def}
 Let $G$ be a topological group and $\a$ be a continuous isometric affine action of $G$ on a Banach space $(B,\|.\|)$ with translation part $b:G \rightarrow B$. Consider the pullback $(G,\P_{\a},F_{\a}(\P_{\a}))$ by $b$ of the natural space with labelled partitions $(B,\P,\d(\P))$ on $B$, where $\P=B'$ and $\d(\P)\simeq B$. \\
 The triple $(G,\P_{\a},F_{\a}(\P_{\a}))$ is called \emph{the space with labelled partitions associated with $\a$}. \smallskip \\
 More precisely, we have : \\
   $\P_{\a} = \{f\circ b+k \st f \in B', \; k \in \K \}$; \smallskip \\
   $F_{\a}(\P_{\a}) \simeq \overline{\Vect(b(G))}^{\|.\|}$; \smallskip \\
\end{df}

\begin{rmq}\label{alpha_rmq} - The linear map $T:F_{\a}(\P_{\a}) \hookrightarrow B$ such that $T: c_{\a}(g,h) \mapsto b(g)-b(h)$ is an isometric embedding, where $c_{\a}$ is the separation map on $G$ associated with $\P_{\a}$.\smallskip \\
 - If the continuous isometric affine action $\a$ is linear i.e. $b(G)=\{ 0 \}$, then the space $(G,\P_{\a},F_{\a}(\P_{\a}))$ with labelled partitions associated with $\a$ is degenerated in the sense that the quotient metric space associated with $(G,d)$ contains a single point, $\P_{\a}$ contains only the zero function from $G$ to $\K$ and $F_{\a}(\P_{\a})=\{0\}$.
\end{rmq}

\begin{prop}\label{labpart_actiontolabpart}
   Let $G$ be a topological group and $(G,\P,F(\P))$ be the space with labelled partitions associated with a continuous isometric affine action of $G$ on a Banach space $B$.  \\
  Then the action of $G$ on itself by left-translation induces a continuous action of $G$ by automorphisms on $(G,\P,F(\P))$.
 \end{prop}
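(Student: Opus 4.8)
The plan is to recognise $(G,\P,F(\P))$ as a pullback and then invoke the equivariant part of Lemma \ref{pullbackpart}, rather than computing the induced action on labelling functions by hand. By Definition \ref{labpart_alpha_def}, the space $(G,\P,F(\P))$ associated with $\a$ is precisely the pullback by the translation part $b\colon G\to B$ of the natural space with labelled partitions $(B,\P_B,\d(\P_B))$ on $B$, where $\P_B=\{f+k\st f\in B',\, k\in\K\}$. Lemma \ref{actcanbanach} already furnishes a continuous action of $G$ by automorphisms on $(B,\P_B,\d(\P_B))$ induced by $\a$. Hence everything reduces to checking that $b$ is $G$-equivariant for the left-translation action of $G$ on itself and the action $\a$ on $B$; the proposition will then follow at once from the ``moreover'' clause of Lemma \ref{pullbackpart}.

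The equivariance is the one point to verify, and it is immediate from the $1$-cocycle relation. Writing $\a(g)=\pi(g)+b(g)$ with $\pi$ the linear part, for $g,x\in G$ I would apply $b(gx)=\pi(g)b(x)+b(g)$ to get
\[
 b(gx)=\pi(g)b(x)+b(g)=\a(g)b(x).
\]
Denoting by $\tau(g)$ the left-translation $x\mapsto gx$, this reads $b(\tau(g)x)=\a(g)b(x)$, i.e. $b\circ\tau(g)=\a(g)\circ b$, which is exactly the equivariance convention used inside the proof of Lemma \ref{pullbackpart}.

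With equivariance established, I would apply Lemma \ref{pullbackpart} directly: $G$ acts on the set $G$ by left-translation, $G$ acts continuously by automorphisms on $(B,\P_B,\d(\P_B))$ by Lemma \ref{actcanbanach}, and $b$ is $G$-equivariant, so $G$ acts continuously by automorphisms on the pullback $(G,\P,F(\P))$, which is the desired conclusion. I do not expect any genuine obstacle here: the whole argument simply assembles Lemmas \ref{actcanbanach} and \ref{pullbackpart}, the single substantive step being the cocycle computation above. The only care needed is bookkeeping, namely to confirm that the structure produced by the pullback lemma coincides with the structure of Definition \ref{labpart_alpha_def} (true by construction) and that the equivariance sign conventions match the cocycle identity (both are covered by the displayed equation).
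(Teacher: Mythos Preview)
Your proof is correct and follows essentially the same approach as the paper: invoke Lemma \ref{actcanbanach} for the action on $(B,\P_B,\d(\P_B))$, check $G$-equivariance of $b$ via the cocycle identity $b(gh)=\a(g)b(h)$, and conclude by the equivariant part of Lemma \ref{pullbackpart}. The paper's proof is in fact slightly terser than yours, but the logical structure is identical.
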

 
 \begin{proof} Let $\a$ be a continuous isometric affine action of $G$ on a Banach space $B$ with translation part $b: G \rightarrow B$. By Lemma \ref{actcanbanach}, $G$ acts continuously on the natural space with labelled partitions on $(B,\P,\d(\P))$ on $B$. Moreover, the map $b$ is $G$-equivariant since we have, for $g,h \in G$, $b(gh)=\a(g)b(h)$. By Lemma \ref{pullbackpart}, it follows that the $G$-action on itself by left-translation induces a continuous action by automorphisms on $(G,\P,F(\P))$.
 \end{proof}

\begin{proof}[Proof of 1) Theorem \ref{labpart_affact}]
   Assume $\a$ is continuous isometric affine action of $G$ on a Banach space $(B,\|.\|)$ with translation part $b$ and let $G$. \\
   By Proposition \ref{labpart_actiontolabpart}, the $G$-action by left-translation on itself induces a continuous action by automorphisms on the space with labelled partitions associated with $\a$, $(G,\P_{\a},F_{\a}(\P_{\a}))$. \\
   Moreover, assume $\a$ is proper. Then, by Remark \ref{alpha_rmq}, we have :
   $$ d_{\a}(g,e)=\|b(g)\| \underset{g \rightarrow \infty}{\longrightarrow} +\infty,$$
   and hence, the $G$-action by automorphisms on $(G,\P_{\a},F_{\a}(\P_{\a}))$ is proper. 
 \end{proof}
 
  \subsubsection{From actions on a space with labelled partitions to isometric affine actions}\label{subsubsec_labaction}
  We prove here statement 2) of Theorem \ref{labpart_affact} by giving a (non-canonical) way to build a proper continuous isometric affine action on a Banach space given a proper continuous action by automorphisms on space with labelled partitions.
  
  \begin{lem}\label{labpart_lptoaction_lem}
  Let $G$ be a topological group, $(X,\P,F(\P))$ be a space with labelled partitions and we denote $E=\Vect(c(x,y) \st x,y \in X)$ where $c$ is the separation map associated with $\P$.\\
  If $G$ acts continuously by automorphisms on $(X,\P,F(\P))$, then, for all $x,y \in X$, $(g,h) \mapsto c(gx,hy)$ is continuous from $G\times G$ to $E$.
 \end{lem}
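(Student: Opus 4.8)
The plan is to deduce the joint continuity of $(g,h)\mapsto c(gx,hy)$ from two facts already available: the strong continuity of the orbit maps, which comes from the hypothesis that $G$ acts continuously, and the Lipschitz property of the separation map recorded in Remark \ref{labpart_metric_rmq}. I would first observe that $c(gx,hy)$ genuinely lies in $E=\Vect(c(x,y) \st x,y \in X)$ for every $g,h\in G$, since $gx,hy\in X$; thus it is enough to prove continuity of the map viewed into $F(\P)$, because $E$ carries the restricted norm and hence the subspace topology.

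Next I would establish the relevant Lipschitz estimate. Since $c(\cdot,\cdot)(p)=p(\cdot)-p(\cdot)$, the separation map satisfies the Chasles relation $c(u,z)=c(u,v)+c(v,z)$, which gives for all $u,v,u',v'\in X$ the identity $c(u,v)-c(u',v')=c(u,u')+c(v',v)$. Passing to norms and using the triangle inequality together with $d(a,b)=\|c(a,b)\|$, this yields
$$\|c(u,v)-c(u',v')\|_{_{F(\P)}}\le d(u,u')+d(v,v').$$
Specializing to $u=gx,\ v=hy,\ u'=g'x,\ v'=h'y$ produces
$$\|c(gx,hy)-c(g'x,h'y)\|_{_{F(\P)}}\le d(gx,g'x)+d(hy,h'y).$$

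Finally I would invoke the continuity hypothesis. Because $G$ acts continuously on $(X,\P,F(\P))$, Definition \ref{labpart_propertiesaction} tells us that the $G$-action on $(X,d)$ is strongly continuous, so the orbit maps $g\mapsto gx$ and $h\mapsto hy$ are continuous from $G$ to $(X,d)$. Fixing $(g_0,h_0)\in G\times G$ and $\eps>0$, I would choose neighbourhoods of $g_0$ and $h_0$ on which $d(gx,g_0x)<\eps/2$ and $d(hy,h_0y)<\eps/2$; the displayed bound then gives $\|c(gx,hy)-c(g_0x,h_0y)\|_{_{F(\P)}}<\eps$, which is exactly joint continuity at $(g_0,h_0)$.

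I do not expect a genuine obstacle in this argument: it is essentially the composition of the continuous map $(g,h)\mapsto(gx,hy)$ into $(X,d)\times(X,d)$ with the separation map. The one point deserving care is that one must use the \emph{joint} continuity of $c$ for the product topology, which is precisely what the Lipschitz estimate above provides, rather than merely separate continuity in each variable.
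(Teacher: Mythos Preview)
Your argument is correct and follows essentially the same route as the paper: compose the continuous orbit map $(g,h)\mapsto(gx,hy)$ with the (jointly) continuous separation map $c:X\times X\to E$. The only difference is that you spell out the Lipschitz estimate $\|c(u,v)-c(u',v')\|\le d(u,u')+d(v,v')$ via the Chasles relation, whereas the paper simply invokes Remark~\ref{labpart_metric_rmq} for the continuity of $c$.
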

 
 \begin{proof}
  Consider on the subspace $E$ of $F(\P)$ the topology given by the norm $\|.\|$ of $F(\P)$. If $X \times X$ is endowed with the product topology of $(X,d)$, as said in Remark \ref{labpart_metric_rmq}, $c: X\times X \rightarrow E$ is continuous and, since the $G$-action on $X$ is strongly continuous, for all $x,y \in X$, $(g,h) \mapsto (gx,hy)$ is continuous. Then, by composition, for all $x,y \in X$, $(g,h) \mapsto c(gx,hy)$ is continuous.
 \end{proof}

 \begin{prop}\label{labpart_labparttoaction}
  Let $G$ be a topological group acting continuously by automorphisms on a space with labelled partitions $(X,\P,F(\P))$. Then there exists a continuous isometric affine action of $G$ on a Banach subspace $B$ of $F(\P)$. \smallskip  \\
  More precisely, $B=\overline{\Vect(c(x,y) \st x,y \in X)}^{\|.\|}$ where $c$ is the separation map associated with $\P$ and $\|.\|$ is the norm of $F(\P)$, and moreover, the linear part $\pi$ and the translation part $b$ of the affine action are given by, for a fixed $x_0 \in X$: 
  $$\pi(g)\k=\k \circ\Phi_{\tau(g)} \text{ for } g \in G \text{ and } \k \in B;$$
  and
  $$b(g)=c(gx_0,x_0) \text{ for } g \in G.$$
 \end{prop}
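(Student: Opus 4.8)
The plan is to take the two maps $\pi$ and $b$ written in the statement and to check, in turn, that $\pi$ is a well-defined isometric representation of $G$ on $B$, that $b$ is a $1$-cocycle for $\pi$, and that both are continuous; the conclusion then follows from the Proposition characterising continuous isometric affine actions by pairs $(\pi,b)$. First I would record the basic equivariance identity: for $g \in G$ and $x,y \in X$, the computation $(c(x,y)\circ \Phi_{\tau(g)})(p)=c(x,y)(p\circ \tau(g))=p(gx)-p(gy)$ gives
$$ \pi(g)\,c(x,y)=c(x,y)\circ \Phi_{\tau(g)}=c(gx,gy). $$
Since $G$ acts by automorphisms, Definition \ref{labpart_homo} guarantees that $\k \mapsto \k\circ \Phi_{\tau(g)}$ maps $F(\P)$ to itself and preserves the norm $\|.\|$, so each $\pi(g)$ is a linear isometry of $F(\P)$. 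The displayed identity shows that $\pi(g)$ permutes the generators $c(x,y)$ of $E:=\Vect(c(x,y)\st x,y\in X)$, hence preserves $E$ and, being an isometry, its closure $B$; thus $\pi(g)$ restricts to a surjective isometry of the Banach space $B$ (a closed subspace of $F(\P)$). That $\pi$ is a representation, i.e. $\pi(g_1)\pi(g_2)=\pi(g_1g_2)$, is then immediate from Remark \ref{comp_homo_rmq}, which gives $\Phi_{\tau(g_2)}\circ \Phi_{\tau(g_1)}=\Phi_{\tau(g_1g_2)}$, together with $\pi(e)=\mathrm{id}$.

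Next I would verify the cocycle relation. The separation map satisfies Chasles' relation $c(x,z)=c(x,y)+c(y,z)$ directly from $c(x,z)(p)=p(x)-p(z)$. Applying this to the three points $g_1g_2x_0$, $g_1x_0$, $x_0$ and then using the equivariance identity above,
$$ b(g_1g_2)=c(g_1g_2x_0,x_0)=c(g_1(g_2x_0),g_1x_0)+c(g_1x_0,x_0)=\pi(g_1)\,b(g_2)+b(g_1), $$
which is exactly the $1$-cocycle condition. Since $b(g)=c(gx_0,x_0)\in E\subset B$, the pair $(\pi,b)$ defines, via $\a(g)\k=\pi(g)\k+b(g)$, an isometric affine action of $G$ on $B$.

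It remains to establish continuity. Continuity of $b$ is a direct consequence of Lemma \ref{labpart_lptoaction_lem}: composing $g\mapsto(g,e)$ with the continuous map $(g,h)\mapsto c(gx_0,hx_0)$ shows $g\mapsto c(gx_0,x_0)$ is continuous into $E\subset B$. For the strong continuity of $\pi$, the same lemma (now composing $g\mapsto(g,g)$ with $(g,h)\mapsto c(gx,hy)$) shows $g\mapsto \pi(g)c(x,y)=c(gx,gy)$ is continuous, so by linearity $g\mapsto \pi(g)\k$ is continuous for every $\k\in E$. The step requiring the most care — and the only non-formal point in the argument — is to promote this to all of $B=\overline{E}$: given $\k\in B$ and $\eps>0$, I would pick $\k'\in E$ with $\|\k-\k'\|<\eps/3$ and use that each $\pi(g)$ is an isometry to bound, near a fixed $g_0$,
$$ \|\pi(g)\k-\pi(g_0)\k\|\leq \|\k-\k'\|+\|\pi(g)\k'-\pi(g_0)\k'\|+\|\k'-\k\|<\tfrac{2\eps}{3}+\|\pi(g)\k'-\pi(g_0)\k'\|, $$
the middle term being $<\eps/3$ for $g$ close to $g_0$ by continuity on $E$. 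This standard $\eps/3$-argument, valid in any topological group by passing to nets, yields strong continuity of $\pi$, and hence $\a$ is a continuous isometric affine action on the Banach subspace $B$ of $F(\P)$.
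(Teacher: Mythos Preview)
Your proof is correct and follows essentially the same approach as the paper: you establish the equivariance identity $\pi(g)c(x,y)=c(gx,gy)$, use Definition \ref{labpart_homo} and Remark \ref{comp_homo_rmq} to get that $\pi$ is an isometric representation preserving $B$, verify the cocycle relation via Chasles' relation, and obtain continuity from Lemma \ref{labpart_lptoaction_lem} together with a density argument. The only difference is cosmetic: you spell out the $\eps/3$ argument for strong continuity on $B$, whereas the paper simply writes ``by density''.
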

 
 \begin{proof}
  Let $\tau$ be the $G$-action on $X$.\\
  By Definition \ref{labpart_homo} and Remark \ref{comp_homo_rmq}, the map $\Phi_{\tau(g)}:\P \rightarrow \P$ such that $\Phi_{\tau(g)}(p)=p\circ \tau(g)$ induces a linear representation $\pi$ of $G$ on $F(\P)$ given by, for $\k \in F(\P)$ and $g\in G$: 
  $$\pi(g)\k=\k\circ\Phi_{\tau(g)}.$$
  By the second requirement of Definition \ref{labpart_homo}, we have $\|\pi(g)\k\|=\|\k\|$. Thus, $\pi$ is an isometric linear representation of $G$ on $F(\P)$. \\
  Consider $E=\Vect(c(x,y)\st x,y \in X)$. Then the Banach subspace $B=\overline{E}^{\|.\|}$ of $F(\P)$ is stable under $\pi$ since $\pi(g)(c(x,y))=c(gx,gy)$ for $x,y \in X$, $g \in G$.
  Let us show that the representation $\pi$ of $G$ on $B$ is strongly continuous. Let $\k=\sum_{i=1}^{n}\l_i c(x_i,y_i) \in E$. We have, for $g \in G$, 
  $$\pi(g)\k=\k\circ \Phi_{\tau(g)}=\sum_{i=1}^{n}\l_i c(gx_i,gy_i) \in E,$$
  and, by Lemma \ref{labpart_lptoaction_lem}, for every $i$, $g \mapsto c(gx_i,gy_i)$ is continuous. \smallskip \\
  Hence, $g \mapsto \sum_{i=1}^{n}\l_i c(gx_i,gy_i)=\pi(g)\k$ is continuous. Finally, by density, for all $\k \in B$, $g \mapsto \pi(g)\k$ is continuous from $G$ to $B$. \medskip \\ 
  
  Now, let us define the translation part of the action. Fix $x_0 \in X$ and set, for all $g \in G$, $b(g)=c(gx_0,x_0) \in E$. We claim $b$ is a continuous 1-cocycle relative to $\pi$; indeed, we have, for $g \in G$, $x,y \in X$, $c(gx,gy)=c(x,y)\circ \Phi_{\tau(g)}=\pi(g)c(x,y)$ and then, for $g,h \in G$, 
  $$ b(gh)=c(ghx_0,x_0)=c(ghx_0,gx_0)+c(gx_0,x_0)=\pi(g)b(h)+b(g). $$
  The continuity of $b$ follows immediatly from Lemma \ref{labpart_lptoaction_lem}. \\
  Hence, the morphism $\a: G \rightarrow \text{Isom}(B)\cap \text{Aff}(B)$ defined by, for all $g \in G$, $\k \in B$, $\a(g)\k=\pi(g)\k+b(g)$ is a continuous isometric affine action of $G$ on $B$. 
 \end{proof}
 
 \begin{rmq}\label{labpart_labparttoaction_rmq}
  In the case where $G$ is discrete, we do not have to find a subspace of $F(\P)$ on which the representation is strongly continuous; then we have the following statement: \\
  If $G$ discrete acts by automorphisms on $(X,\P,F(\P))$, then there exists an isometric affine action of $G$ on $F(\P)$.
 \end{rmq}
  
 \begin{proof}[Proof of 2) Theorem \ref{labpart_affact}]
  Assume $G$ acts \emph{properly} continuously on a space with labelled partitions $(X,\P,F(\P))$. \\
  Consider the action $\a$ on the Banach subspace $B=\overline{E}^{\|.\|}$ given by prop \ref{labpart_labparttoaction}, where $E=\Vect(c(x,y)\st x,y \in X)$ and $\a(g)\k=\pi(g)\k+b(g)$, for $g \in G$, $\k \in B$. \smallskip \\
  Then we have, if we denote by $d$ the pseudo-metric of labelled partitions on $X$: 
  $$\|b(g)\|=\|c(gx_0,x_0)\|_{\P}=d(gx_0,x_0)\underset{g\rightarrow \infty}{\longrightarrow}\infty$$
  since the action of $G$ on $(X,\P,F(\P))$ is proper, and hence, $\a$ is a proper continuous isometric affine action of $G$ on $B$.
 \end{proof}
 
\section{Labelled partitions on a direct sum}\label{sec_dirsum}
 In this section, we define a space with labelled partitions on the direct sum of a countable family of spaces with labelled partitions and we build on it a proper action given by proper actions on each factor.  

 \subsection{Natural space with labelled partitions on a direct sum}\label{subsec_dsum}
 Given a family of space with labelled partitions, we give a natural construction of a space with labelled partitions on the direct sum of this family. A similar construction in the case of spaces with measured walls can be found in \cite{chemarval}.
 
 \begin{df}\label{const_dirsum_df}
 Let $I$ be an index set, $(X_i)_{i\in I}$ be a family of non empty sets and fix $x_0=(x_i^0)_{i \in I} \in \prod_{i \in I} X_i$. \\
 The direct sum of the family $(X_i)_{i\in I}$ relative to $x_0$ is defined by:
 $$ \lexp{x_0}{\bigoplus_{i \in I}}X_i:= \left \lbrace(x_i)_{i\in I} \in \prod_{i\in I}X_i \st x_i \neq x_i^0 \text{ for finitely many }i \in I \right\rbrace .$$ \smallskip \\
 For $i \in I$, we denote by $\pi^{X}_{X_i}:X \rightarrow X_i$ the canonical projection from the direct sum to the factor $X_i$. \medskip \\
 For $x=(x_i)_{i \in I} \in \lexp{x_0}{\bigoplus_{i \in I}}X_i$, the \emph{support} of $x$ is the finite subset of $I$: 
 $$\supp (x) = \{i \in I \st x_i \neq x_i^0 \}.$$
 \end{df}
 
 \begin{df}\label{dirsum_labpart_df}
  Let $I$ be an index set, $ \left((X_i,\P_i,F_i(\P_i))\right)_{i\in I}$ be a family of spaces with labelled partitions and fix $x_0=(x_i^0)_{i \in I} \in \prod_{i \in I} X_i$. We denote $X=\lexp{x_0}{\bigoplus_{i \in I}}X_i$. \smallskip \\
  Let $i \in I$. For $p_i \in \P_i$, we define the labelling function $p_i^{\oplus_i}: X \rightarrow \K$ by: 
  $$ p_i^{\oplus_i}= p_i\circ \pi^{X}_{X_i}.$$
  i.e., for $x=(x_i)_{i \in I} \in X$, $ p_i^{\oplus_i}(x)=p_i(x_i)$. \medskip \\
  We denote $\P_i^{\oplus_i}=\{p_i^{\oplus_i} \st p_i \in \P_i \}$, and we call the set 
  $$ \P_X=\bigcup_{i \in I} \P_i^{\oplus_i} $$
  the \emph{natural family of labelling functions on $X$} (associated with the family $( \P_i )_{i \in I}$).
 \end{df}
 
 Let $X_1,X_2$ be non empty sets and $\P_1,\P_2$ be families of labelling functions on, respectively, $X_1$ and $X_2$. \\
 In terms of partitions, if $P_1$ is the partition of $X_1$ associated with $p_1 \in \P_1$, the partition $P_1^{\oplus_1}$ of $X_1 \times X_2$ associated with $p_1^{\oplus_1}$ is: $$ P_1^{\oplus_1}=\{ h \times X_2 \st h_1 \in P_1 \}, $$
 and similarly, for $p_2 \in \P_2$, we have: $$ P_2^{\oplus_2}=\{ X_1 \times k \st k_1 \in P_2 \}. $$
 
 \begin{center}
  \includegraphics{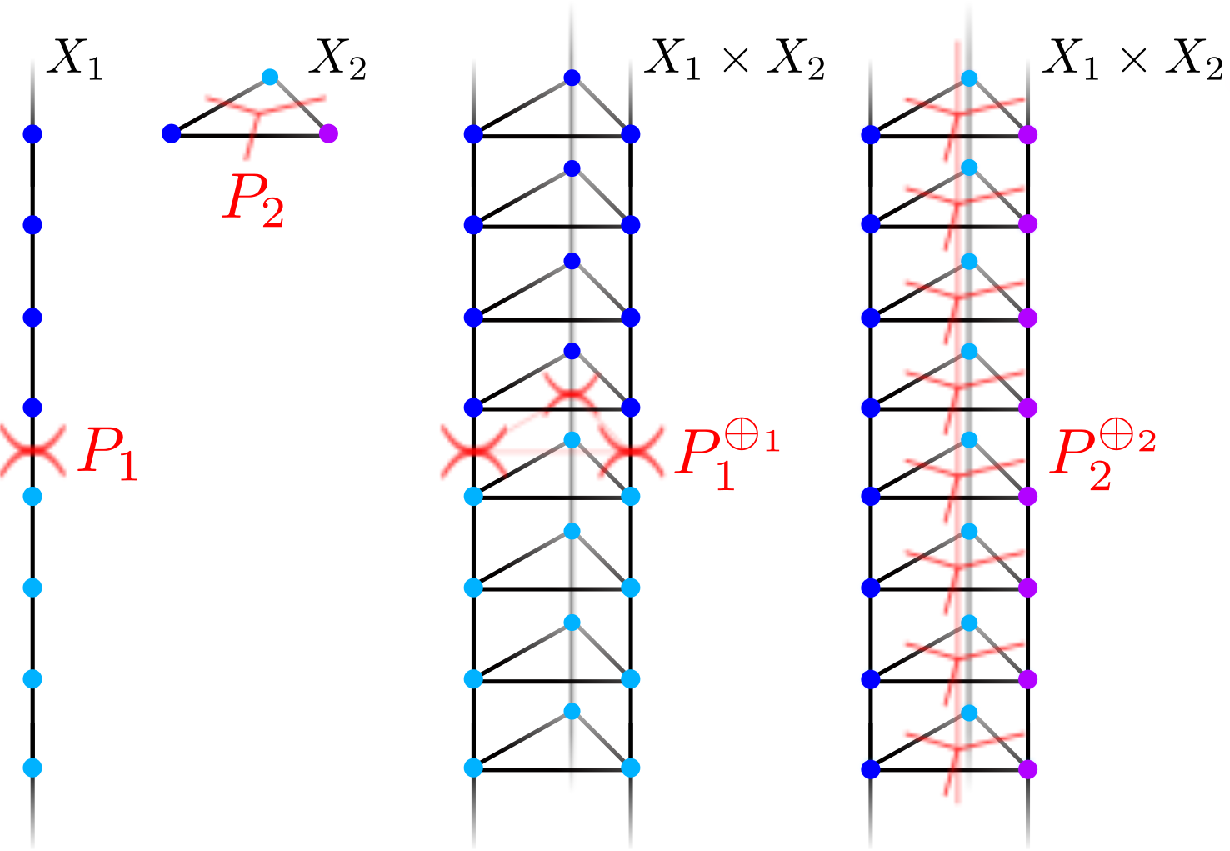} \\
  Partitions for the direct product
 \end{center}
 
 \begin{df}\label{dirsum_banach_df}
  Let $I$ be a countable index set, $ \left((X_i,\P_i,F_i(\P_i))\right)_{i\in I}$ be a family of spaces with labelled partitions and fix $x_0=(x_i^0)_{i \in I} \in \prod_{i \in I} X_i$. We denote $X=\lexp{x_0}{\bigoplus_{i \in I}}X_i$. \medskip \\
  Let $i \in I$. For $\k_i \in F_i(\P_i)$, we denote $\k_i^{\oplus_i}: \P_X \rightarrow \K$ the function:
  \begin{center}\begin{equation*}
   \k_i^{\oplus_i}(p)=\begin{cases} \k_i(p_i)\text{ if }p=p_i^{\oplus_i} \in \P_i^{\oplus_i} \\
                            0\;\;\;\;\:\:\text{ if }p=p_j^{\oplus_j} \in \P_j^{\oplus_j} \text{ with } i\neq j
                           \end{cases}
   \end{equation*}
  \end{center}
  Let $q \geq 1$. We denote $F_q(\P_X)$ the closure of 
  $$ E_q(\P_X):=\left\lbrace \sum_{i\in I} \k_i^{\oplus_i} \st \k_i \in F_i(\P_i)\text{ with }\k_i \neq 0  \text{ for a finite number of }i \in I \right\rbrace,$$ endowed with the norm $\|.\|_{q}$ defined by, for $\k=\sum_{i\in I} \k_i^{\oplus_i}$:
  $$ \|\k\|_{q}:=\left(\sum_{i \in I}\|\k_i\|_{_{F_i(\P_i)}}^q\right)^{\frac{1}{q}}.$$
  The vector space $F_q(\P_X)$ is called the \emph{$q$-space of functions on $\P_X$ of $X$}.
 \end{df}

 \begin{prop}\label{dirsum_banach}
  Let $I$ be a countable index set and $ \left((X_i,\P_i,F_i(\P_i))\right)_{i\in I}$ be a family of spaces with labelled partitions and fix $x_0=(x_i^0)_{i \in I} \in \prod_{i \in I} X_i$. We denote $X=\lexp{x_0}{\bigoplus_{i \in I}}X_i$. \medskip \\
  Then $(F_q(\P_X),\|.\|_{q})$ is isometrically isomorphic to $(\bigoplus^q_{i\in I}F_i(\P_i),\|.\|_q)$. In particular, $F_q(\P_X)$ is a Banach space.
 \end{prop}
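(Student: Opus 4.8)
The plan is to write down the obvious candidate map and recognise the statement as an instance of the standard fact that a surjective isometry defined on a dense subspace, with values in a complete space, extends uniquely to an isometric isomorphism. Concretely, I would define on the dense subspace $E_q(\P_X)$ the linear map
$$T_0: E_q(\P_X) \longrightarrow \bigoplus^q_{i\in I}F_i(\P_i), \qquad T_0\Big(\sum_{i\in I}\k_i^{\oplus_i}\Big)=(\k_i)_{i\in I},$$
where the sum is finite so that the image is a finitely supported tuple and therefore lies in the $\ell^q$-direct sum.

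First I would check that $T_0$ is well defined and linear. The only real point is the uniqueness of the decomposition $\k=\sum_{i}\k_i^{\oplus_i}$ of an element of $E_q(\P_X)$: since $\k_i^{\oplus_i}$ vanishes on $\P_j^{\oplus_j}$ for $j\neq i$ and restricts to $\k_i$ on $\P_i^{\oplus_i}$ (under the identification $p_i^{\oplus_i}\leftrightarrow p_i$), and since $\P_X=\bigcup_{i\in I}\P_i^{\oplus_i}$ is a disjoint union, evaluating $\k$ on $\P_j^{\oplus_j}$ recovers $\k_j$ for each $j$. Linearity is then immediate from the definition of the functions $\k_i^{\oplus_i}$. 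Next I would observe that $T_0$ is isometric and has dense range. The isometry is nothing but the defining formula for $\|\cdot\|_q$ on $E_q(\P_X)$, namely $\|\sum_i\k_i^{\oplus_i}\|_q=\big(\sum_i\|\k_i\|_{F_i(\P_i)}^q\big)^{1/q}=\|(\k_i)_{i\in I}\|_q$; and the range of $T_0$ is exactly the set of finitely supported tuples, which is dense in $\bigoplus^q_{i\in I}F_i(\P_i)$ because $q<\infty$.

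Finally, since $F_q(\P_X)$ is by definition the closure (completion) of $(E_q(\P_X),\|\cdot\|_q)$ and the target $\bigoplus^q_{i\in I}F_i(\P_i)$ is complete (as established earlier for $\ell^q$-direct sums of Banach spaces), the isometry $T_0$ from the dense subspace $E_q(\P_X)$ onto the dense subspace of finitely supported tuples extends uniquely to a surjective linear isometry $T:F_q(\P_X)\to\bigoplus^q_{i\in I}F_i(\P_i)$, which is the desired isometric isomorphism; completeness of the target then yields that $F_q(\P_X)$ is a Banach space. The only step requiring genuine care is the uniqueness of the decomposition, i.e. that the supports $\P_i^{\oplus_i}$ are pairwise disjoint so that $\|\cdot\|_q$ is indeed a norm (not merely a seminorm) on $E_q(\P_X)$; once this is granted, the remainder is the routine extension-of-a-densely-defined-surjective-isometry argument.
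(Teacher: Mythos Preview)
The paper states this proposition without proof, treating it as a routine fact about $\ell^q$-direct sums; your argument is exactly the natural way to supply the omitted details, and it is correct. The only caveat you raise yourself---that the sets $\P_i^{\oplus_i}$ must be pairwise disjoint for the decomposition $\k=\sum_i\k_i^{\oplus_i}$ to be unique and for $\|\cdot\|_q$ to be a genuine norm---is indeed the one point where the paper's definitions are slightly loose (constant labelling functions with equal values on distinct factors would collide), but this is an issue with the setup rather than with your proof.
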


 \subsection{Action on the natural space with labelled partitions of the direct sum}\label{subsec_actdsum}

 Let $I$ be an index set and $(H_i)_{i \in I}$ be a family of groups. We denote $e_W=(e_{H_i})_{i \in I}$ where, for $i \in I$, $e_{H_i}$ is the identity element of $H_i$. \\
 We simply denote $\displaystyle \bigoplus_{i \in I}H_i$ the group $W=\displaystyle \lexp{e_W}{\bigoplus_{i \in I}}H_i$ whose identity element is $e_W$.
 
 \begin{prop}\label{dirsum_action} $\;$ \\
  Let $I$ be a countable set and $(H_i)_{i \in I}$ be a family of groups such that, for each $i \in I$, $H_i$ acts by automorphisms on a space with labelled partitions $(X_i,\P_i,F_i(\P_i))$. We denote $X=\lexp{x_0}{\bigoplus_{i \in I}}X_i$ and $W=\bigoplus_{i \in I}H_i$. \medskip \\
  Let $q \geq 1$. Then $W$ acts by automorphisms on the natural space with labelled partitions on the direct sum $(X,\P_X,F_q(\P_X))$ via the natural action of $W$ on $X$.
 \end{prop}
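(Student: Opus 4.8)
The plan is to show that, for every $w=(w_i)_{i\in I}\in W$, the component-wise map $\tau(w):X\rightarrow X$ sending $x=(x_i)_{i\in I}$ to $wx=(w_i x_i)_{i\in I}$ is an automorphism of the space with labelled partitions $(X,\P_X,F_q(\P_X))$ in the sense of Definition \ref{labpart_homo}. First I would check that $\tau(w)$ is well defined and that $w\mapsto\tau(w)$ is a $W$-action: since $\supp(wx)\subseteq\supp(w)\cup\supp(x)$ is finite, $wx$ again lies in $X$, and the action axioms hold coordinate-wise. As the inverse of $\tau(w)$ is $\tau(w^{-1})$, which is again the natural action of an element of $W$, it is enough to prove that \emph{every} $\tau(w)$ is a homomorphism of spaces with labelled partitions; applying this to $w^{-1}$ then shows that $\tau(w)^{-1}$ is a homomorphism as well, so $\tau(w)$ is an automorphism.

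Next I would verify the first requirement of Definition \ref{labpart_homo}, the stability of $\P_X$ under $\Phi_{\tau(w)}$. Write $\tau_i(w_i):X_i\rightarrow X_i$ for the automorphism of $(X_i,\P_i,F_i(\P_i))$ induced by the $H_i$-action. A generator of $\P_X$ has the form $p_i^{\oplus_i}=p_i\circ\pi^{X}_{X_i}$ with $i\in I$ and $p_i\in\P_i$, and evaluating at $x\in X$ gives $(p_i^{\oplus_i}\circ\tau(w))(x)=p_i(w_i x_i)=\big(p_i\circ\tau_i(w_i)\big)(x_i)$. Since $H_i$ acts by automorphisms, $p_i\circ\tau_i(w_i)=\Phi_{\tau_i(w_i)}(p_i)\in\P_i$, hence
$$\Phi_{\tau(w)}\big(p_i^{\oplus_i}\big)=\big(\Phi_{\tau_i(w_i)}(p_i)\big)^{\oplus_i}\in\P_i^{\oplus_i}\subseteq\P_X,$$
which settles the first requirement.

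The heart of the argument is the isometry requirement, which I would first establish on the dense subspace $E_q(\P_X)$. For a finite sum $\k=\sum_{i\in I}\k_i^{\oplus_i}\in E_q(\P_X)$, evaluating $\k\circ\Phi_{\tau(w)}$ on a generator $p_i^{\oplus_i}$ and using that $\k_j^{\oplus_j}$ is supported on $\P_j^{\oplus_j}$ together with the identity above yields $(\k\circ\Phi_{\tau(w)})(p_i^{\oplus_i})=\k_i(\Phi_{\tau_i(w_i)}(p_i))$, so that
$$\k\circ\Phi_{\tau(w)}=\sum_{i\in I}\big(\k_i\circ\Phi_{\tau_i(w_i)}\big)^{\oplus_i}.$$
As each $H_i$ acts by automorphisms, $\k_i\circ\Phi_{\tau_i(w_i)}$ lies in $F_i(\P_i)$ with $\|\k_i\circ\Phi_{\tau_i(w_i)}\|_{F_i(\P_i)}=\|\k_i\|_{F_i(\P_i)}$; hence $\k\circ\Phi_{\tau(w)}\in E_q(\P_X)$ and, directly from the definition of $\|.\|_q$,
$$\|\k\circ\Phi_{\tau(w)}\|_q=\Big(\sum_{i\in I}\|\k_i\circ\Phi_{\tau_i(w_i)}\|_{F_i(\P_i)}^q\Big)^{1/q}=\Big(\sum_{i\in I}\|\k_i\|_{F_i(\P_i)}^q\Big)^{1/q}=\|\k\|_q.$$
Thus precomposition by $\Phi_{\tau(w)}$ is a linear isometry of $E_q(\P_X)$, and it is surjective because its inverse is precomposition by $\Phi_{\tau(w^{-1})}$, the purely formal relation $\Phi_{\tau(w^{-1})}\circ\Phi_{\tau(w)}=\Phi_{\tau(e)}=\mathrm{id}$ holding already at the level of functions on $\P_X$.

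Finally I would pass to the closure: a surjective linear isometry of the dense subspace $E_q(\P_X)$ extends uniquely and continuously to a surjective linear isometry of the Banach space $F_q(\P_X)=\overline{E_q(\P_X)}$, giving the isometry requirement of Definition \ref{labpart_homo} on all of $F_q(\P_X)$. I expect this last step, rather than the coordinate-wise computation, to be the only delicate point: one must confirm that the continuous extension still coincides with precomposition by $\Phi_{\tau(w)}$, which is where one uses that the coordinate maps $\k\mapsto\k_i$ are $\|.\|_q$-norm contractions, so that, via the identification of Proposition \ref{dirsum_banach}, convergence in $F_q(\P_X)$ passes to each factor $F_i(\P_i)$ and is compatible with the formula above. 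Granting this, every $\tau(w)$ is a homomorphism, hence an automorphism, and $W$ acts by automorphisms on $(X,\P_X,F_q(\P_X))$.
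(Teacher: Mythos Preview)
Your proof is correct and follows essentially the same approach as the paper's: verify stability of $\P_X$ under $\Phi_{\tau(w)}$ via the identity $\Phi_{\tau(w)}(p_i^{\oplus_i})=(\Phi_{\tau_i(w_i)}(p_i))^{\oplus_i}$, compute $\k\circ\Phi_{\tau(w)}=\sum_i(\k_i\circ\Phi_{\tau_i(w_i)})^{\oplus_i}$ on $E_q(\P_X)$ to get the isometry there, and then pass to the closure by density. You are somewhat more explicit than the paper about well-definedness of the $W$-action on $X$ and about why the continuous extension still agrees with precomposition by $\Phi_{\tau(w)}$, whereas the paper simply invokes completeness and density without further comment.
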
 
 
 \begin{proof}
  We denote by $\tau$ the $W$-action on $X$ and for $w \in W$, $p \in \P_X$, $\Phi_{\tau(w)}(p):=p\circ \tau(w)$ and, for $i \in I$, we denote by $\tau_i$ the $H_i$-action on $X$ and for $h_i \in H_i$, $p_i \in \P_i$, $\Phi_{\tau_i(h_i)}(p_i):=p_i\circ \tau_i(h_i)$.\smallskip  \\  
  Let $p \in \P_X=\bigcup_{i \in I}\P_i^{\oplus_i}$ and $w=(h_i)_{i \in I} \in W$. Then there exists $i \in I$ and $p_i \in \P_i$ such that $p=p_i^{\oplus_i}$, and we have:
  $$ \Phi_{\tau(w)}(p_i^{\oplus_i})=(\Phi_{\tau_i(h_i)}(p_i))^{\oplus_i} \in \P_i^{\oplus_i}\subset \P_X, $$
  since $\Phi_{\tau_i(h_i)}(p_i)$ belongs to $\P_i$. \medskip \\
  For $\k=\sum_{i \in I}\k_i^{\oplus_i} \in E_q(\P_X)$, we have:
  \begin{center}
   \begin{tabular}{rl}$\k\circ \Phi_{\tau(w)}(p)$&$=\k(p_i^{\oplus_i}\circ \tau(w))$ \smallskip \\
                       &$=\k((p_i\circ \tau_i(h_i))^{\oplus_i})$ \smallskip \\
                       &$=\k_i^{\oplus_i}((p_i\circ \tau_i(h_i))^{\oplus_i})$ \smallskip \\
                       &$=\k_i(p_i\circ \tau_i(h_i))$ \smallskip \\
                       &$=\k_i\circ \Phi_{\tau_i(h_i)}(p_i)$ \smallskip \\
                       $\k\circ \Phi_{\tau(w)}(p)$&$=(\k_i\circ \Phi_{\tau_i(h_i)})^{\oplus_i}(p_i^{\oplus_i}),$ \\
                      \end{tabular}
                       \end{center}
  And hence, 
  $$ \k\circ \Phi_{\tau(w)}=\sum_{i \in I}(\k_i\circ \Phi_{\tau_i(h_i)})^{\oplus_i} \in F_q(\P_X). $$
  By completeness of $F_q(\P_X)$, for all $\k \in F_q(\P_X)$, $\k\circ \Phi_{\tau(w)} \in F_q(\P_X)$. \smallskip \\
  Moreover, for $\k=\sum_{i \in I}\k_i^{\oplus_i} \in E_q(\P_X)$, we have:
  $$ \|\k\circ \Phi_{\tau(w)}\|_{q}^q=\sum_{i \in I}\|\k_i\circ \Phi_{\tau_i(h_i)}\|_{_{F_i(\P_i)}}^q=\sum_{i \in I}\|\k_i\|_{_{F_i(\P_i)}}= \|\k\|_{q}^q, $$
  since, for all $i \in I$, $\|\k_i\circ \Phi_{\tau_i(h_i)}\|_{_{F_i(\P_i)}}=\|\k_i\|_{_{F_i(\P_i)}}$. \smallskip \\
  Thus, by density of $E_q(\P_X)$ in $F_q(\P_X)$, for all $\k \in F_q(\P_X)$, $\|\k\circ \Phi_{\tau(w)}\|_{q}=\|\k\|_{q}.$ \smallskip \\
  It follows that $W$ acts by automorphisms on $(X,\P_X,F_q(\P_X))$. \smallskip \\
  
 \end{proof}
 
 When $I$ is finite, $X=\lexp{x_0}{\bigoplus_{i \in I}}X_i$ is simply the direct sum of the $X_i$ and does not depend on $x_0$. In this case, proper continuous actions on each factor $(X_i,\P_i,F_i(\P_i))$ induce a proper continuous action on the natural space with labelled partitions of the direct sum $(X,\P_X,F_q(\P_X))$:
 
 \begin{prop}\label{dirsum_finite}
  Let $n \in \N^*$. For $i \in I=\{1,...,n\}$, let $H_i$ be a topological group acting \emph{properly continuously} on a space with labelled partitions $(X_i,\P_i,F_i(\P_i))$; we denote $\displaystyle X=X_1 \times ... \times X_n$ and $W=H_1 \times ... \times H_n$. \smallskip \\
  Let $q \geq 1$. Then $W$ acts \emph{properly continuously} by automorphisms on the natural space with labelled partitions of the direct product $(X,\P_X,F_q(\P_X))$ via the natural action of $W$ on $X$.
 \end{prop}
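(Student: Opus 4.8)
The plan is to build on Proposition \ref{dirsum_action}, which already produces the action of $W$ by automorphisms on $(X,\P_X,F_q(\P_X))$ once we specialize it to the finite index set $I=\{1,\dots,n\}$ (for which the direct sum relative to $x_0$ is simply the direct product $X_1\times\cdots\times X_n$, independent of $x_0$). Thus the only things left to check are that this action is \emph{continuous} and \emph{proper} in the sense of Definition \ref{labpart_propertiesaction}, i.e. that the induced $W$-action on $(X,d_X)$ is strongly continuous and metrically proper. Everything will follow from a single observation about how the pseudo-metric $d_X$ decomposes over the factors.

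First I would record that $d_X$ is the $\ell^q$-aggregate of the factor pseudo-metrics. Writing $x=(x_i)_i$, $y=(y_i)_i$ and letting $c_i$, $c_X$ denote the separation maps of $\P_i$, $\P_X$, the definition of $p_i^{\oplus_i}$ gives $c_X(x,y)(p_i^{\oplus_i})=p_i(x_i)-p_i(y_i)=c_i(x_i,y_i)(p_i)$, so $c_X(x,y)=\sum_{i=1}^n c_i(x_i,y_i)^{\oplus_i}$. By the definition of the norm $\|.\|_q$ on $F_q(\P_X)$ this yields
\[ d_X(x,y)^q=\|c_X(x,y)\|_q^q=\sum_{i=1}^n\|c_i(x_i,y_i)\|_{_{F_i(\P_i)}}^q=\sum_{i=1}^n d_i(x_i,y_i)^q, \]
where $d_i$ is the labelled partitions pseudo-metric on $X_i$; in particular $d_i(x_i,y_i)\le d_X(x,y)$ for every $i$.

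For continuity I would fix $x=(x_i)_i\in X$ and show $w\mapsto wx$ is continuous from $W$, with its product topology, to $(X,d_X)$. If $w'=(h_i')_i\to w=(h_i)_i$ then $h_i'\to h_i$ in each $H_i$, and strong continuity of the $H_i$-action on $(X_i,d_i)$ gives $d_i(h_i'x_i,h_ix_i)\to 0$; since $d_X(w'x,wx)^q=\sum_{i=1}^n d_i(h_i'x_i,h_ix_i)^q$ is a finite sum, it tends to $0$, as required. For properness I would use the sublevel-set characterization of metric properness together with the inequality $d_i(h_ix_i^0,x_i^0)\le d_X(wx_0,x_0)$: fixing a base point $x_0=(x_i^0)_i$ and $R\ge 0$, this inequality shows
\[ \{\,w\in W \st d_X(wx_0,x_0)\le R\,\}\subseteq \prod_{i=1}^n\{\,h_i\in H_i \st d_i(h_ix_i^0,x_i^0)\le R\,\}, \]
and each factor on the right is relatively compact by properness of the $H_i$-action; a finite product of relatively compact sets is relatively compact, so the left-hand set is relatively compact in $W$ and the action is proper.

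The steps here are all routine; the only point demanding care --- rather than a genuine obstacle --- is to run the properness argument through sublevel sets instead of trying to argue directly that $w\to\infty$ forces $d_X(wx_0,x_0)\to\infty$. The latter is slightly delicate because a net may escape to infinity in $W$ while remaining bounded in some coordinates, whereas the sublevel-set formulation sidesteps this entirely and is exactly the form in which properness of the factor actions is available. I would also note that finiteness of $I$ is used twice (the finite sum in the metric identity and the finite product of relatively compact sets), which is precisely why this proposition is stated for finite products while Proposition \ref{dirsum_action} only asserts the automorphism property in the countable case.
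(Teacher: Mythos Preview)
Your proof is correct and follows essentially the same route as the paper's: both invoke Proposition \ref{dirsum_action} for the automorphism action, derive the key identity $d_X(x,y)^q=\sum_{i=1}^n d_i(x_i,y_i)^q$, argue properness via the sublevel-set inclusion into a finite product of relatively compact sets, and obtain continuity from the factorwise strong continuity (the paper phrases this last step via the equivalence of the $d_X$-topology with the product topology, but the content is identical).
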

 
 \begin{proof}
  We consider the group $W$ endowed with the product topology of the $H_i$'s. We denote by $c$ the separation map associated with $\P_X$ and, for $i \in I$, $c_i$ the separation map associated with $\P_i$.
  By Proposition \ref{dirsum_action}, $W$ acts by automorphisms on $(X,\P_X,F_q(\P_X))$. Let us show that this action is proper. For $x=(x_1,...,x_n) \in X$ and for $w=(h_1,...,h_n) \in W$, we have :
  $$ \|c(wx,x)\|_{q}^q=\sum_{i=1}^n\|c_i(h_ix_i,x_i)\|_{_{F_i(\P_i)}}^q.$$
  Thus, if $\|c(wx,x)\|_{q}\leq R$ for some $R \geq 0$, then for $i=1,...,n$, $\|c_i(h_ix_i,x_i)\|_{_{F_i(\P_i)}}\leq R$. \\
  Hence, for every $R \geq 0$ : \\ 
  $\{w=(h_i)\in W \st \|c(wx,x)\|_{q} \leq R \}$ is a subset of $\prod_{i=1}^n \{h_i \in H_i \st \|c_i(h_ix_i,x_i)\|_{_{F_i(\P_i)}}\leq R \}$ which is a relatively compact set in $W$ since each $H_i$ acts properly on $(X_i,\P_i,F_i(\P_i))$. It follows that $W$ acts properly on $(X,\P_X,F_q(\P_X))$. \\
  
  It remains to prove that the $W$-action on $(X,d)$ is strongly continuous. Remark that $d=(\sum_{i=0}^nd_i^q)^{\frac{1}{q}}$, then, the topology of $(X,d)$ is equivalent to the product topology of the $X_i$'s on $X$. \\
  Let $x=(x_i)_{i \in I} \in X$. We denote by $\tau_x:W\rightarrow X$ the function $w \mapsto wx$. For all $i \in I$, $\pi_{X_i}^X\circ \tau_x:w\rightarrow h_ix_i$ is continuous since $h_i \rightarrow h_ix_i$ is continuous; hence it follows that $\tau_x$ is continuous.
 \end{proof} \medskip

 If $I$ is countably infinite, even if each $H_i$-action on $(X_i,\P_i,F_i(\P_i))$ is proper, $W$ does not act properly on the natural space with labelled partitions on the direct sum $(X,\P_X,F_q(\P_X))$ in general. In fact, let $C$ be a positive real constant, and assume there exists, in each $H_i$, an element $h_i$ such that $\|c_i(h_ix_i^0,x_i^0)\|_{_{F_i(\P_i)}} \leq C$. For $j \in I$, the element $\d_j(h_j)$ of $W$ such that $\pi^{W}_{H_i}(\d_j(h_j))=e_{H_i}$ if $i \neq j$ and $\pi^{W}_{H_j}(\d_j(h_j))=h_j$ leaves every finite set of $W$ when $j$ leaves every finite set of $I$, but:
$$\|c(\d_j(h_j)x_0,x_0)\|_{_{F_q(\P_X)}}=\|c_i((h_j)x_i^0,x_i^0)\|_{_{F_i(\P_i)}} \leq C.$$
And then, $W$ does not act properly on $(X,\P_X,F_q(\P_X))$. \smallskip \\
To make $W$ act properly on a space with labelled partitions in the case where $W$ is endowed with the discrete topology, we have to define a structure of labelled partitions on $W$ such that the labelled partitions metric between $e_W$ and $w$ goes to infinity when the support of $w$ leaves every finite set in $I$. To build this structure, we scale every labelling function of the naive $\ell^q$ space with labelled partitions on each factor $H_i$ by a weight depending on $i$ which grows as $i$ leaves every finite set in $I$. \medskip \\

 \begin{nt}\label{const_suppinf}
 Let $I$ be a countable index set and $X=\lexp{x_0}{\bigoplus_{i \in I}}X_i$ be a direct sum of sets $X_i$'s. \medskip \\
 We say that, for $x \in X$, \emph{$supp(x)$ leaves every finite set in $I$} or \emph{$supp(x)\rightarrow \infty$ in $I$} if there exists $j \in supp(x)$ which leaves every finite set in $I$.
\end{nt} 

 
 \begin{df}\label{phinaivelabpart}
  Let $X$ be a set and $w$ be a non-negative real. \\
  We set, for $x \in X$:
  $$\lexp{(w)}{\di}_{x}:=2^{-\frac{1}{q}}w\d_{x}:X\rightarrow \K,$$
  where $\d_x:X\rightarrow \{0,1\}$ is the Dirac function at $x$, and we call the set
  $$\lexp{(w)}{\di}:=\{\lexp{(w)}{\di}_{x} \st x \in X \},$$
  the \emph{$w$-weighted naive family of labelling functions} on $X$.
 \end{df}
 
 \begin{prop}
  Let $X$ be a set and $w$ be a non-negative real. \smallskip \\
  Let $q \geq 1$. Then the triple $(X,\lexp{(w)}{\di},\ell^q(\lexp{(w)}{\di}))$ is a space with labelled partitions. \\
  Moreover, if a group $H$ acts on $X$, then $H$ acts by automorphisms on $(X,\lexp{(w)}{\di},\ell^q(\lexp{(w)}{\di}))$.
 \end{prop}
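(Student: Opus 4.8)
The plan is to imitate, almost verbatim, the proof of Proposition~\ref{naivelabpart_prop} and the computation of Remark~\ref{labpart_homo_ex}, the sole new feature being the scalar weight $w$ that multiplies every Dirac function.

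First I would verify that $(X,\lexp{(w)}{\di},\ell^q(\lexp{(w)}{\di}))$ is a space with labelled partitions, that is, that the separation map $c$ associated with $\lexp{(w)}{\di}$ lands in $\ell^q(\lexp{(w)}{\di})$. Fix $x,y\in X$ with $x\neq y$; for any $z\in X$ one has $c(x,y)(\lexp{(w)}{\di}_{z})=2^{-\frac{1}{q}}w\,(\d_{z}(x)-\d_{z}(y))$, which vanishes unless $z\in\{x,y\}$ and equals $\pm 2^{-\frac{1}{q}}w$ when $z=x$ or $z=y$. Summing $q$-th powers over $z\in X$ then leaves exactly the two nonzero contributions, giving $\|c(x,y)\|_{q}^{q}=2\cdot\big(2^{-\frac{1}{q}}w\big)^{q}=w^{q}<+\infty$. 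Hence $c(x,y)\in\ell^q(\lexp{(w)}{\di})$ for all $x,y\in X$, which is what has to be checked. In passing, this shows the labelled partitions pseudo-metric equals $w$ times the discrete metric on $X$.

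Next I would establish that $H$ acts by automorphisms, following Remark~\ref{labpart_homo_ex}. Writing $\tau(h)\colon x\mapsto hx$, the key identity is that precomposition permutes the weighted Diracs: since $\d_{z}(hx)=1$ if and only if $x=h^{-1}z$, one gets $\lexp{(w)}{\di}_{z}\circ\tau(h)=\lexp{(w)}{\di}_{h^{-1}z}\in\lexp{(w)}{\di}$, so $\Phi_{\tau(h)}$ sends $\lexp{(w)}{\di}$ into itself, which is the first requirement of Definition~\ref{labpart_homo}. For the isometry requirement, for $\k\in\ell^q(\lexp{(w)}{\di})$ I would write $\|\k\circ\Phi_{\tau(h)}\|_{q}^{q}=\sum_{z\in X}|\k(\lexp{(w)}{\di}_{h^{-1}z})|^{q}$ and reindex along the bijection $z\mapsto h^{-1}z$ of $X$ to recover $\|\k\|_{q}^{q}$; thus $\Phi_{\tau(h)}$ preserves the norm and $H$ acts by automorphisms.

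There is no genuine obstacle here, as the argument is a direct transcription of the unweighted naive case; the only point deserving attention is the bookkeeping of the constant $2^{-\frac{1}{q}}$ against the weight $w$, which is calibrated precisely so that the two surviving terms in $\|c(x,y)\|_{q}^{q}$ sum to $w^{q}$ rather than to $2w^{q}$.
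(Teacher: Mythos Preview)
Your proposal is correct and is precisely the ``straightforward generalization of Proposition~\ref{naivelabpart_prop} and Remark~\ref{labpart_homo_ex}'' that the paper's one-line proof invokes; you simply carry out explicitly the computations the paper leaves implicit, with the weight $w$ inserted throughout. The only (harmless) omission is that you verify each $\tau(h)$ is a homomorphism but do not explicitly note that $\tau(h)^{-1}=\tau(h^{-1})$ is one too, which is what is needed for the automorphism condition in Definition~\ref{labpart_homo}.
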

 
 \begin{proof}
  It is a straightfoward generalization of Proposition \ref{naivelabpart_prop} and Remark \ref{labpart_homo_ex}.
 \end{proof}
 
 Subsquently, for a countably infinite set $I$, we consider a function $\phi: I \rightarrow \R_+$ such that $\phi(i)\underset{i\rightarrow \infty}{\longrightarrow}+\infty$ (such a function always exists when $I$ is countably infinite: for instance, take any bijective enumeration function $\phi$ from $I$ to $\N$).

 \begin{lem}\label{propdirsum_lem}
  Let $I$ be a countably infinite set and $(H_i)_{i \in I}$ be a family of countable discrete groups and we denote $W$ the group $\bigoplus_{i \in I}H_i$ endowed with the discrete topology.
  Consider, on each $H_i$, the $\phi(i)$-weighted naive family of labelling functions $\lexp{(\phi(i))}{\di}$ and we denote by $\lexp{(\phi)}{\di}=\bigcup_{i \in I}\lexp{(\phi(i))}{\di}^{\oplus_i}$ the natural set of labelling functions associated with $(\lexp{(\phi(i))}{\di})_{i \in I}$.\medskip \\
  Let $q \geq 1$. Then, $W$ acts by automorphisms on the natural space with labelled partitions on the direct sum $(W,\lexp{(\phi)}{\di},F_q(\lexp{(\phi)}{\di}))$. \\
  Moreover, we have:
  $$ \|c_{\phi}(w,e_W)\|_{_{F_q(\lexp{(\phi)}{\di})}}\rightarrow +\infty \text{ when } \supp(w) \rightarrow \infty \text{ in } I, $$
  where $c_{\phi}$ is the separation map associated with $\lexp{(\phi)}{\di}$.
 \end{lem}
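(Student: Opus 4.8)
The plan is to obtain the action by automorphisms straight from the direct-sum construction, and then to read off the norm estimate from the direct-sum norm formula together with a one-line weighted version of the naive computation.

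For the first assertion, I would apply Proposition \ref{dirsum_action} to the family of spaces with labelled partitions $\big(H_i,\lexp{(\phi(i))}{\di},\ell^q(\lexp{(\phi(i))}{\di})\big)_{i\in I}$, where $H_i$ acts on $X_i=H_i$ by left translation. By the weighted version of the naive construction (the Proposition preceding this Lemma, which generalizes Proposition \ref{naivelabpart_prop} and Remark \ref{labpart_homo_ex}), each such left-translation action is an action by automorphisms. Taking the base point $x_i^0=e_{H_i}$ gives $X=\lexp{e_W}{\bigoplus_{i\in I}}H_i=W$ and natural family of labelling functions $\P_X=\lexp{(\phi)}{\di}$, so Proposition \ref{dirsum_action} immediately yields that $W$ acts by automorphisms on $(W,\lexp{(\phi)}{\di},F_q(\lexp{(\phi)}{\di}))$.

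For the estimate, I would first use the coordinatewise decomposition of the separation map built into the direct-sum structure (Definition \ref{dirsum_banach_df} and Proposition \ref{dirsum_banach}): writing $w=(w_i)_{i\in I}$ and letting $c_i$ denote the separation map on $H_i$, one has
$$ \|c_\phi(w,e_W)\|_q^q=\sum_{i\in I}\|c_i(w_i,e_{H_i})\|_{_{\ell^q(\lexp{(\phi(i))}{\di})}}^q. $$
Next I would compute each summand by repeating the computation of Proposition \ref{naivelabpart_prop} while keeping the weight $\phi(i)$ in front of each Dirac mass: for $w_i\neq e_{H_i}$ only the labelling functions at $w_i$ and $e_{H_i}$ contribute, each by $\pm 2^{-1/q}\phi(i)$, so that $\|c_i(w_i,e_{H_i})\|^q=\phi(i)^q$, while the summand vanishes when $w_i=e_{H_i}$. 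Hence
$$ \|c_\phi(w,e_W)\|_q^q=\sum_{i\in\supp(w)}\phi(i)^q. $$

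Finally I would pass to the limit. By Notation \ref{const_suppinf}, the condition $\supp(w)\to\infty$ in $I$ means there is an index $j\in\supp(w)$ that leaves every finite subset of $I$; since $\phi(i)\to+\infty$ as $i\to\infty$, for any $M$ such a $j$ eventually satisfies $\phi(j)>M$. Bounding the sum below by its single $j$-th term gives $\|c_\phi(w,e_W)\|_q\geq\phi(j)$, which tends to $+\infty$. The only genuinely delicate point is the correct reading of the condition $\supp(w)\to\infty$ as a net statement and the observation that retaining just one coordinate of the support already forces the norm to blow up; the direct-sum norm formula and the weighted arithmetic are routine.
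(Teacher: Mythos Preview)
Your proposal is correct and follows essentially the same route as the paper: invoke Proposition \ref{dirsum_action} for the action, use the direct-sum norm formula together with the weighted naive computation to get $\|c_\phi(w,e_W)\|_q^q=\sum_{i\in\supp(w)}\phi(i)^q$, and then bound below by the single term $\phi(j)^q$ for some $j\in\supp(w)$ escaping to infinity.
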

 
 \begin{proof}
  By Proposition \ref{dirsum_action}, $W$ acts by automorphisms on $(W,\lexp{(\phi)}{\di},F_q(\lexp{(\phi)}{\di}))$ and we have, for $w=(h_i),w'=(h'_i) \in W$:
  \begin{center}
   \begin{tabular}{rl}
    $\|c_{\phi}(w,w')\|_{_{F_q(\lexp{(\phi)}{\di})}}^q$&$\displaystyle =\sum_{i\in I}\|c_{\phi(i)}(h_i,h'_i)\|_q^q$ \smallskip \\
    &$\displaystyle =\sum_{i\in \supp(w^{-1}w')}\phi(i)^q$.  \\
   \end{tabular}
  \end{center}
 Let $w \in W$ such that $\supp(w) \rightarrow \infty$ in $I$. Then there exists $j \in \supp(w)$ such that $j\rightarrow \infty$ in $I$ and hence:
 $$ \|c_{\phi}(w,e_W)\|_{_{F_q(\lexp{(\phi)}{\di})}}^q=\sum_{i\in \supp(w)}\phi(i)^q \geq \phi(j)^q \rightarrow +\infty. $$

 \end{proof}

\begin{prop}\label{const_dirsumproper}
 Let $I$ be a countably infinite set and $(H_i)_{i \in I}$ be a family of countable discrete groups such that, for each $i \in I$, $H_i$ acts \emph{properly} by automorphisms on a space with labelled partitions $(X_i,\P_i,F_i(\P_i))$. We denote $X=\lexp{x_0}{\bigoplus_{i \in I}}X_i$ and $W=\bigoplus_{i \in I}H_i$ endowed with the discrete topology. \smallskip \\
 Let $q \geq 1$. Then there exists a structure of space with labelled partitions $(Y,\P_Y,F_q(\P_Y))$ on which $W$ acts \emph{properly} by automorphisms. \smallskip \\
 More precisely, $(Y,\P_Y,F(\P_Y))$ is the natural space with labelled partitions on the direct product $Y=X\times W$ where : \smallskip 
 \begin{itemize}
  \item on $X$, we consider the natural space with labelled partitions on the direct sum of the family $((X_i,\P_i,F_i(\P_i)))_{i \in I}$; \smallskip 
  \item on $W$, we consider the natural space with labelled partitions on the direct sum of the family $((H_i,\lexp{(\phi(i))}{\di},\ell^q(\lexp{(\phi(i))}{\di})))_{i \in I}$ where for $i \in I$, $\lexp{(\phi(i))}{\di}$ is the $\phi(i)$-weighted naive family of labelling functions on $H_i$.
 \end{itemize}
\end{prop}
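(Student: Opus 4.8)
The plan is to realize $(Y,\P_Y,F_q(\P_Y))$ as a two-factor instance of the direct-product construction of Section~\ref{sec_dirsum} and then to run a two-stage properness argument. First I would set $Y=X\times W$ and equip it with the natural space with labelled partitions on the direct product of the two factors $(X,\P_X,F_q(\P_X))$ and $(W,\lexp{(\phi)}{\di},F_q(\lexp{(\phi)}{\di}))$; thus $\P_Y=\P_X^{\oplus}\cup\lexp{(\phi)}{\di}^{\oplus}$ and, by Proposition~\ref{dirsum_banach}, every $\k\in F_q(\P_Y)$ splits as $\k=\k_X^{\oplus}+\k_W^{\oplus}$ with $\|\k\|_q^q=\|\k_X\|^q+\|\k_W\|^q$. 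The group $W$ then acts on $Y$ diagonally by $w\cdot(x,w')=(w\cdot x,\,ww')$, where $w\cdot x$ is the natural action of $W$ on the direct sum $X$ (Proposition~\ref{dirsum_action}) and $ww'$ is left-translation on the direct sum $W$ (Lemma~\ref{propdirsum_lem}).

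Next I would verify that this diagonal action is by automorphisms. Since each $p\in\P_Y$ is lifted from a single factor, a direct computation shows $\Phi_w(p_X^{\oplus})=(\Phi_{\tau_X(w)}(p_X))^{\oplus}$ and $\Phi_w(p_W^{\oplus})=(\Phi_{\tau_W(w)}(p_W))^{\oplus}$, both of which remain in $\P_Y$ because $W$ already acts by automorphisms on each factor; the norm-preservation condition then follows factorwise from the $\ell^q$ splitting above, exactly as in the proof of Proposition~\ref{dirsum_action}. Here I should be careful that this is a \emph{diagonal} action of the single group $W$, not a product action, so Proposition~\ref{dirsum_finite} does not apply verbatim and the automorphism property must be checked by hand. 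Strong continuity is automatic since $W$ carries the discrete topology.

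The heart of the argument is properness, which I would establish through Proposition~\ref{labpart_proper_prop} with basepoint $y_0=(x_0,e_W)$. Writing $w=(h_i)_{i\in I}$ and using Lemma~\ref{propdirsum_lem} for the $W$-summand, the separation map splits as
$$\|c_Y(wy_0,y_0)\|_q^q=\|c_X(wx_0,x_0)\|^q+\|c_\phi(w,e_W)\|^q=\sum_{i\in I}\|c_i(h_ix_i^0,x_i^0)\|^q+\sum_{i\in\supp(w)}\phi(i)^q.$$
The key point, as the discussion preceding Lemma~\ref{propdirsum_lem} shows, is that neither the $W$-action on $X$ nor the $W$-action on $W$ is proper on its own, whereas their combination is. Indeed, fix $R\ge 0$ and suppose $\|c_Y(wy_0,y_0)\|_q\le R$. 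The weighted summand forces $\sum_{i\in\supp(w)}\phi(i)^q\le R^q$, and since $\phi(i)\to+\infty$ only finitely many indices satisfy $\phi(i)\le R$, so $\supp(w)$ is contained in a fixed finite set $J\subset I$. The $X$-summand then gives $\|c_i(h_ix_i^0,x_i^0)\|\le R$ for each $i\in J$, and properness of the $H_i$-action on $X_i$ confines each $h_i$ to a finite subset of $H_i$; hence $w$ ranges over a finite subset of $W$, proving properness. The main obstacle is precisely organizing this two-stage confinement: the divergence $\phi(i)\to+\infty$ must first pin down the support of $w$ inside a finite set, after which the per-factor properness can pin down each surviving coordinate.
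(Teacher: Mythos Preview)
Your proposal is correct and follows essentially the same route as the paper: build $Y=X\times W$ as the two-factor direct-product space with labelled partitions, let $W$ act diagonally, and run the same two-stage properness estimate at the basepoint $(x_0,e_W)$. The one place where the paper is slightly slicker is the automorphism check: rather than verifying the diagonal action by hand, the paper applies Proposition~\ref{dirsum_action} to the two-term family $(W,W)$ acting on $(X,W)$ to get a $W\times W$-action by automorphisms on $Y$, and then simply restricts to the diagonal subgroup $\{(w,w)\}\le W\times W$---this saves the factorwise computation you propose, though of course your direct check works too.
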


\begin{proof}
 By Proposition \ref{dirsum_action}, $W$ acts by automorphisms on both $(X,\P_X,F_q(\P_X))$ and $(W,\lexp{(\phi)}{\di},\ell^q(\lexp{(\phi)}{\di}))$.
 We set $Y=X \times W$ and consider the natural space with labelled partitions $(Y,\P_Y,F_q(\P_Y))$ on the direct product where:
 $$ \P= \P_X^{\oplus_1}\cup \lexp{(\phi)}{\di}^{\oplus_2}, $$
 and 
 $$ F_q(\P) \simeq F_q(\P_X) \oplus \ell^q(\lexp{(\phi)}{\di}).$$
 Then, by Proposition \ref{dirsum_action}, $W\times W$ acts by automorphisms on $(Y,\P_Y,F_q(\P_Y))$ via the action $(w_1,w_2).(x,w)=(w_1.x,w_2w)$.
 Hence, $W$ acts by automorphisms on $(Y,\P_Y,F_q(\P_Y))$, where $W$ is viewed as the diagonal subgroup $\{(w,w)\st w \in W\} < W\times W$. \smallskip \\
 
 It remains to prove that the $W$-action on $(Y,\P_Y,F_q(\P_Y))$ is proper. We have, for $w=(h_i) \in W$:
 \begin{center}
   \begin{tabular}{rl}
    $\|c_{\P_Y}(w.(x_0,e_W),(x_0,e_W))\|_{_{F_q(\P_Y)}}^q$&$\displaystyle =\|c_{\P_X}(w.x_0,x_0)\|_{_{F_q(\P_X)}}^q+\|c_{\phi}(w,e_W)\|_q^q$ \smallskip \\
    &$\displaystyle =\sum_{i\in \supp(w)}\|c(h_ix^0_i,x^0_i)\|_{_{F_i(\P_i)}}^q+\sum_{i\in \supp(w)}\phi(i)^q$.  \\
   \end{tabular}
  \end{center}
Hence, for $R \geq 0$, $\|c_{\P_Y}(w.(x_0,e_W),(x_0,e_W))\|_{_{F_q(\P_Y)}} \leq R$ implies that $\|c(h_ix^0_i,x^0_i)\|_{_{F_i(\P_i)}} \leq R$ and $\phi(i) \leq R$ for all $i \in \supp(w)$. Thus, for all $R \geq 0$, $\{w \st \|c_{\P_Y}(w.(x_0,e_W),(x_0,e_W))\|_{_{F_q(\P_Y)}} \leq R \}$ is a subset of 
$$ \left\lbrace w=(h_i) \st \supp(w) \subset \{j \st \phi(j) \leq R\} \text{ and } \{ h_i \in H_i \st \|c(h_ix^0_i,x^0_i)\|_{_{F_i(\P_i)}} \leq R \}\right\rbrace, $$
which is a finite set as $\{j \st \phi(j) \leq R\}$ is finite and by properness of the $H_i$'s actions, each $ \{ h_i \in H_i \st \|c(h_ix^0_i,x^0_i)\|_{_{F_i(\P_i)}}\leq R\}$ is finite. \\
It follows that $W$ acts properly on $(Y,\P_Y,F_q(\P_Y))$.
\end{proof}

 \subsection{Action of a semi-direct product on a space with labelled partitions}\label{subsec_semidir}


\begin{df}[compatible action]\label{const_semidir_comp}
  Let $G_1,G_2$ be groups and $\rho:G_2\rightarrow Aut(G_1)$ be a morphism of groups. \\
  Consider a set $X$ on which $G_1$ and $G_2$. We say that the $G_2$-action is \emph{compatible} with the $G_1$-action with respect to $\rho$ if, for $g_1 \in G_1$, $g_2 \in G_2$, we have, for all $x \in X$ :
  $$ g_2g_1g_2^{-1}x=\rho(g_2)(g_1)x. $$
 \end{df}

 \begin{exemple}\label{const_semidir_comp_ex}
  If $\rho:G_2\rightarrow Aut(G_1)$ is a morphism, then the action $\rho$ of $G_2$ on $G_1$ is compatible with the action of $G_1$ on itself by translation with respect to $\rho$. 
 \end{exemple}
 \bigskip

\begin{prop}\label{const_semidir_labpart}
  Let $(X_1,\P_1,F_1(\P_1))$,$(X_2,\P_2,F_2(\P_2))$ be spaces with labelled partitions and $G_1,G_2$ be topological groups acting continuously by automorphisms on, respectively,\linebreak $(X_1,\P_1,F_1(\P_1))$ and $(X_2,\P_2,F_2(\P_2))$ via $\tau_1$ and $\tau_2$. \\ Let $ \rho:G_2\rightarrow Aut(G_1)$ be a morphism of groups such that $(g_1,g_2) \mapsto \rho(g_2)g_1$ is continuous for the product topology on $G_1 \times G_2$. \smallskip \\
  Assume that there exists a continuous action by automorphisms of $G_1\rtimes_{\rho}G_2$ on $X_1$ which extends the $G_1$ action. \medskip \\
  Then the semi-direct product $G_1\rtimes_{\rho}G_2$ acts continuously by automorphisms on the natural structure of labelled partitions $(X_1\times X_2, \P,F_q(\P))$ on the direct product of $X_1\times X_2$. \smallskip \\
  Moreover, if, for $i=1,2$, $G_i$ acts properly on $(X_i,\P_i,F_i(\P_i))$, then $G_1\rtimes_{\rho}G_2$ acts properly on $(X_1\times X_2, \P,F(\P))$.
 \end{prop}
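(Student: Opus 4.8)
The plan is to build an action of the semi-direct product $G_1\rtimes_{\rho}G_2$ on the direct product $X_1\times X_2$ and then verify that it acts by automorphisms on the natural labelled-partition structure, invoking Proposition \ref{dirsum_action} and the properness argument of Proposition \ref{dirsum_finite}. First I would fix the action of $G=G_1\rtimes_{\rho}G_2$ on $Y=X_1\times X_2$: on the first factor we use the assumed extension of the $G_1$-action to a continuous $G$-action $\widetilde\tau_1$ on $X_1$, and on the second factor we let $G$ act through the quotient $G\twoheadrightarrow G_2$ followed by $\tau_2$. Explicitly, writing $g=(g_1,g_2)$, set $g.(x,y)=(\widetilde\tau_1(g)x,\ \tau_2(g_2)y)$. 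The compatibility hypothesis (Definition \ref{const_semidir_comp}) together with Example \ref{const_semidir_comp_ex} is exactly what guarantees this is a genuine group action of the semi-direct product and not merely of the direct product, since the $G_2$-conjugation of $G_1$ must agree with $\rho$.

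Next I would check that this action is by automorphisms of $(X_1\times X_2,\P,F_q(\P))$. The cleanest route is to observe that each generator preserves the labelling functions. Since $\P=\P_1^{\oplus_1}\cup\P_2^{\oplus_2}$, I would verify that for any $p_1^{\oplus_1}\in\P_1^{\oplus_1}$ the precomposition $p_1^{\oplus_1}\circ\tau(g)$ lands back in $\P_1^{\oplus_1}$, using that $\widetilde\tau_1(g)$ is an automorphism of $(X_1,\P_1,F_1(\P_1))$; and similarly that $p_2^{\oplus_2}\circ\tau(g)\in\P_2^{\oplus_2}$ because $\tau_2(g_2)$ is an automorphism of $(X_2,\P_2,F_2(\P_2))$. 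The key point is that the coordinates do not mix: a first-coordinate labelling function stays a first-coordinate one because the action is diagonal in the product, so the computation reduces on each summand to the hypothesis that $G_1\rtimes_\rho G_2$ (resp. $G_2$) acts by automorphisms on the corresponding factor. The norm-preservation $\|\k\circ\Phi_{\tau(g)}\|_q=\|\k\|_q$ then follows summand by summand from the factorwise isometry property and the definition of $\|.\|_q$, exactly as in the proof of Proposition \ref{dirsum_action}. Strong continuity of the $G$-action on $(Y,d)$ follows as in Proposition \ref{dirsum_finite}: the product pseudo-metric induces the product topology, and each coordinate orbit map is continuous — here using the continuity of $\widetilde\tau_1$ on $X_1$ and of $\tau_2$ on $X_2$, together with the assumed continuity of $(g_1,g_2)\mapsto\rho(g_2)g_1$.

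Finally, for properness, I would run the same splitting as in Proposition \ref{dirsum_finite}. Writing $c$ for the separation map on $Y$ and $c_1,c_2$ for those on $X_1,X_2$, and fixing a base point $(x_1^0,x_2^0)$, the identity
$$\|c(g.(x_1^0,x_2^0),(x_1^0,x_2^0))\|_q^q=\|c_1(\widetilde\tau_1(g)x_1^0,x_1^0)\|_{F_1(\P_1)}^q+\|c_2(\tau_2(g_2)x_2^0,x_2^0)\|_{F_2(\P_2)}^q$$
shows that the sublevel set $\{g\st\|c(g.(x_1^0,x_2^0),(x_1^0,x_2^0))\|_q\le R\}$ is contained in the preimage, under $g\mapsto(\widetilde\tau_1(g)x_1^0,g_2)$, of a product of two sets that are relatively compact by the separate properness of the $G_1$- and $G_2$-actions. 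I expect the main obstacle to lie not in these routine factorwise verifications but in the bookkeeping around the word ``properly'': the hypotheses only assert properness of the original $G_1$-action on $X_1$, whereas the constructed cocycle involves the extended $G$-action $\widetilde\tau_1$. One must argue that control of $\|c_2(\tau_2(g_2)x_2^0,x_2^0)\|$ confines $g_2$ to a relatively compact set, after which the extended action restricted to the relevant slice reduces to the genuine $G_1$-action, so that the $G_1$-properness can be applied; making this reduction precise — i.e. showing the full sublevel set is relatively compact in $G_1\rtimes_\rho G_2$ rather than merely in a product — is the step requiring the most care.
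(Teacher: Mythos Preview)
Your proposal is correct and follows essentially the same route as the paper: the same diagonal action $(g_1,g_2).(x_1,x_2)=(\tau_1(g_1)\tilde\rho(g_2)x_1,\tau_2(g_2)x_2)$, the same factorwise verification that $\Phi_{\tau(g)}$ preserves $\P_1^{\oplus_1}$ and $\P_2^{\oplus_2}$ separately, the same norm computation reducing to Proposition~\ref{dirsum_action}, and the same continuity argument. You also correctly locate the only genuine subtlety in the properness step and describe the right fix; the paper carries this out exactly as you anticipate, first using properness of $\tau_2$ to confine $g_2$ to a compact $K_2$, and then, via the triangle inequality
\[
\|c_1(\tau_1(g_1)\tilde\rho(g_2)x_1,x_1)\|\ge \|c_1(\tau_1(g_1)\tilde\rho(g_2)x_1,\tilde\rho(g_2)x_1)\| - \sup_{g_2\in K_2}\|c_1(\tilde\rho(g_2)x_1,x_1)\|,
\]
reducing to the genuine $G_1$-properness at the base point $\tilde\rho(g_2)x_1$.
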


 \begin{proof}
  Let us denote by $\tau_1$ the $G_1$-action on $X_1$, by $\tau_2$ the $G_2$-action on $X_2$ and by $\tilde{\rho}$ the $G_2$-action on $G_1$ defined by the restriction on $G_2$ of the $G_1\rtimes_{\rho}G_2$-action on $X_1$. Then $\tilde{\rho}$ is compatible with $\tau_1$ with respect to $\rho$. \\  
  We denote by $\tau$ the action of $G=G_1\rtimes_{\rho}G_2$ on $X=X_1\times X_2$ defined by:
  $$\tau(g_1,g_2)(x_1,x_2)=(\tau_1(g_1)(\tilde{\rho}(g_2)x_1),\tau_2(g_2)x_2).$$ 
  We show that, via this action, $G$ acts by automorphisms on the direct product of spaces with labelled partitions $(X, \P,F_q(\P))$ where $\P=\P_1^{\oplus_1} \cup \P_2^{\oplus_2}$ and $F_q(\P)\simeq F_1(\P_1)\oplus F_2(\P_2)$ endowed with the $q$-norm of the direct sum for $q \geq 1$. \\
  
  Let $p\in \P$ and $g=(g_1,g_2) \in G$. If $p=p_1^{\oplus_1} \in \P_1^{\oplus_1}$, then, for all $x=(x_1,x_2) \in X$, we have:
  \begin{center}

  \begin{tabular}{rl}$\Phi_{\tau(g)}(p)(x)$&$=p(\tau(g)x)$ \smallskip \\
                       &$=p_1^{\oplus_1}(\tau_1(g_1)(\tilde{\rho}(g_2)x_1),\tau_2(g_2)x_2)$ \smallskip \\
                       &$=p_1(\tau_1(g_1)(\tilde{\rho}(g_2)x_1))$ \smallskip \\
                       &$=p_1\circ \tau_1(g_1)\circ \tilde{\rho}(g_2)(x_1)$ \smallskip \\
                       $\Phi_{\tau(g)}(p)(x)$&$=(p_1\circ \tau_1(g_1)\circ \tilde{\rho}(g_2))^{\oplus_1}(x_1,x_2),$ \medskip \\
  \end{tabular}
 \end{center}
 and since $G_1$ acts by automorphisms on $(X_1,\P_1,F_1(\P_1))$ via $\tau_1$, we have $p_1\circ \tau_1(g_1) \in \P_1$, and $G_2$ acts by automorphisms on $(X_1,\P_1,F_1(\P_1))$ via $\tilde{\rho}$, then $p_1\circ \tau_1(g_1) \circ \tilde{\rho}(g_2) \in \P_1$. \\
 Hence, $\Phi_{\tau(g)}(p)=(p_1\circ \tau_1(g_1)\circ \tilde{\rho}(g_2))^{\oplus_1}$ belongs to $\P$. \smallskip \\
 For $p=p_2^{\oplus_2} \in \P_2^{\oplus_2}$, we have $\Phi_{\tau(g)}(p)=(p_2\circ \tau_2(g_2))^{\oplus_2}$ which belongs to $\P$ since $G_2$ acts by automorphisms on $(X_2,\P_2,F_2(\P_2))$ via $\tau_2$. \\
 Then, for all $g \in G$ and all $p \in \P$, $$ \Phi_{\tau(g)}(p)=p \circ \tau(g) \in \P. $$
 Let us fix some notations. We denote, for $g_1 \in G_1$, $g_2 \in G_2$:\medskip  \\
 - $\Phi^{^{(1)}}_{\tau_1(g_1)}: \P_1 \rightarrow \P_1$ the map $\Phi^{^{(1)}}_{\tau_1(g_1)}(p_1)=p_1\circ \tau_1(g_1)$; \smallskip \\
 - $\Phi^{^{(\tilde{\rho})}}_{\tilde{\rho}(g_2)}: \P_1 \rightarrow \P_1$ the map $\Phi^{^{(\tilde{\rho})}}_{\tilde{\rho}(g_2)}(p_1)=p_1\circ \tilde{\rho}(g_2)$; \smallskip \\
 - $\Phi^{^{(2)}}_{\tau_2(g_2)}: \P_2 \rightarrow \P_2$ the map $\Phi^{^{(2)}}_{\tau_2(g_2)}(p_2)=p_2\circ \tau_2(g_2)$. \medskip \\
 Let $\k$ be in $F(\P)$ and $g=(g_1,g_2) \in G$. We have, for all  $p_1 \in \P_1$ and all $p_2 \in \P_2$: 
 $$\k \circ \Phi_{\tau(g)}(p_1^{\oplus_1})=(\k_1\circ\Phi^{^{(\tilde{\rho})}}_{\tilde{\rho}(g_2)} \circ \Phi^{^{(1)}}_{\tau_1(g_1)})^{\oplus_1}(p_1^{\oplus_1}),$$ and $$\k \circ \Phi_{\tau(g)}(p_2^{\oplus_2})=(\k_1\circ \Phi^{^{(2)}}_{\tau_2(g_2)})^{\oplus_2}(p_2^{\oplus_2}). $$
 Hence, $\k \circ \Phi_{\tau(g)} = (\k_1\circ \Phi^{^{(\tilde{\rho})}}_{\tilde{\rho}(g_2)} \circ \Phi^{^{(1)}}_{\tau_1(g_1)})^{\oplus_1} + (\k_2\circ \Phi^{^{(2)}}_{\tau_2(g_2)})^{\oplus_2}$ and we have: \medskip \\
 
 \begin{tabular}{rl}$\|\k \circ \Phi_{\tau(g)} \|_{q}^q$&$=\|\k_1\circ \Phi^{^{(\tilde{\rho})}}_{\tilde{\rho}(g_2)} \circ \Phi^{^{(1)}}_{\tau_1(g_1)}\|_{_{F_1(\P_1)}}^q+\|\k_2\circ \Phi^{^{(2)}}_{\tau_2(g_2)}\|_{_{F_2(\P_2)}}^q$ \smallskip \\
                       &$=\|\k_1\|_{_{F_1(\P_1)}}^q+\|\k_2\||_{_{F_2(\P_2)}}^q$ \smallskip \\
                       $\|\k \circ \Phi_{\tau(g)} \|_{q}$&$=\|\k\|_{q}$ \smallskip \\
                      \end{tabular} \\
It follows that $G_1\rtimes_{\rho}G_2$ acts by automorphisms on the space with labelled partitions $(X_1\times X_2,\P,F_q(\P))$. \medskip \\
It remains to check this action by automorphisms is continuous, i.e. for all $x \in X$, $g \mapsto \tau(g)x$ is continuous. \smallskip \\
As a set $G_1\rtimes_{\rho}G_2$ is simply $G_1\times G_2$ and since $(g_1,g_2) \mapsto \rho(g_2)g_1$ is continuous, the product topology on $G_1\times G_2$ is compatible with the group structure of $G_1\rtimes_{\rho}G_2$ (see \cite{boutop}, III.18 Proposition 20). \\
Moreover, $\tau_1$, $\tau_2$ and $\tilde{\rho}$ are strongly continuous, then, for all $(x_1,x_2) \in X$, the map $(g_1,g_2) \rightarrow (\tau(g_1)(\tilde{\rho}(g_2)x_1),\tau_2(g_2)x_2)$ is continuous from $G_1 \times G_2$ endowed with the product topology to $(X,d)$ where $d$ is the labelled partitions pseudo-metric. \\
Hence, $G_1\rtimes_{\rho}G_2$ acts continuously by automorphisms on $(X,\P,F_q(\P))$. \bigskip \\
Assume, for $i=1,2$, $G_i$ acts properly on $(X_i,\P_i,F_i(\P_i))$ via $\tau_i$, and we denote by $c_i$ the separation map associated with $\P_i$. \\
Fix $x_0=(x_1,x_2) \in X_1\times X_2$. \\
The following egality holds for every $g=(g_1,g_2) \in G_1\rtimes_{\rho}G_2$: 
$$\|c(\tau(g)x_0,x_0)\|_{q}^q=\|c_1(\tau_1(g_1)(\tilde{\rho}(g_2)x_1),x_1)\|_{_{F_1(\P_1)}}^q+\|c_2(\tau_2(g_2)x_2,x_2)\|_{_{F_2(\P_2)}}^q .$$
Since $G_1\rtimes_{\rho}G_2$ is endowed with the product topology of $G_1$ and $G_2$, $g=(g_1,g_2) \rightarrow \infty$ in $G_1\rtimes_{\rho}G_2$  if, and only if, $g_1 \rightarrow \infty$ in $G_1$ or $g_2 \rightarrow \infty$ in $G_2$. Hence, we have two disjoint cases: \smallskip \\
First case: $g_1 \rightarrow \infty$ in $G_1$ and $g_2$ belongs to a compact subset $K_2$ of $G_2$. \\
By continuity of $g'_2 \mapsto \|c(\tilde{\rho}(g'_2)x_1,x_1)\|_{_{F_1(\P_1)}}$, there exists $C(K_2) \geq 0$ such that, for every $g'_2 \in K_2$, $\|c(\tilde{\rho}(g'_2)x_1,x_1)\|_{_{F_1(\P_1)}}\leq C(K_2)$, and, hence, 
\begin{center}
 \begin{tabular}{rl}
  $\|c(\tau(g_1)\tilde{\rho}(g_2)x_1,\tilde{\rho}(g_2)x_1)\|_{_{F_1(\P_1)}}$&$ \leq \|c(\tau_1(g_1)\tilde{\rho}(g_2)x_1,x_1)\|_{_{F_1(\P_1)}}+\|c(\tilde{\rho}(g_2)x_1,x_1)\|_{_{F_1(\P_1)}}$ \smallskip\\
  &$\leq \|c(\tau(g_1)\tilde{\rho}(g_2)x_1,x_1)\|_{_{F_1(\P_1)}} +C(K_2)$.
 \end{tabular}
\end{center}
But, since $G_1$ acts properly on $(X_1,\P_1,F_1(\P_1))$, $\|c(\tau(g_1)\tilde{\rho}(g_2)x_1,\tilde{\rho}(g_2)x_1)\|_{_{F_1(\P_1)}}\underset{g_1\rightarrow \infty}{\longrightarrow} +\infty$, and then,
 $$ \|c(\tau(g_1)\tilde{\rho}(g_2)x_1,x_1)\|_{_{F_1(\P_1)}}\underset{g_1\rightarrow \infty}{\longrightarrow} +\infty. $$
It follows that $\|c(\tau(g)x_0,x_0)\|_{q} \underset{g_1\rightarrow \infty}{\longrightarrow} +\infty$. \medskip \\
Second case: $g_2 \rightarrow \infty$ in $G_2$. \\
We have $\|c_2(\tau_2(g_2)x_2,x_2)\|_{_{F_2(\P_2)}}\underset{g_2\rightarrow \infty}{\longrightarrow} +\infty$ and then $\|c(\tau(g)x_0,x_0)\|_{q} \rightarrow +\infty$. \smallskip \\
Finally, as required, we have $$\|c(\tau(g)x_0,x_0)\|_{q} \underset{g\rightarrow \infty}{\longrightarrow} +\infty,$$
and then, $G_1\rtimes_{\rho} G_2$ acts properly by automorphisms on $(X,\P,F_q(\P))$.

 \end{proof}
 
\section{Wreath products and property $PL^p$}\label{sec_wreath}

Using Proposition \ref{const_semidir_labpart}, we simplify a part of the proof of Th 6.2 in \cite{wreath} where Cornulier, Stalder and Valette establish the stability of the Haagerup property by wreath product; and we generalize it in the following way: the wreath product of a group with property $PL^p$ by a Haagerup group has property $PL^p$.

\begin{noth}[\ref{const_semidir_wreathflp}]
  Let $H,G$ be countable discrete groups, $L$ be a subgroup of $G$ and $p > 1$, with $p \notin 2\Z\smallsetminus \{2\}$. We denote by $I$ the quotient $G/L$ and $W=\bigoplus_{I}H$. Assume that $G$ is Haagerup, $L$ is co-Haagerup in $G$ and $H$ has property $PL^p$. \\
  Then the permutational wreath product $H\wr_I G=W\rtimes G$ has property $PL^p$.
 \end{noth}

\subsection{Permutational wreath product}

We first introduce the notion of permutational wreath product :

\begin{df}\label{wreath_df}
 Let $H,G$ be countable groups, $I$ be a $G$-set and $W=\bigoplus_{i \in I}H$. The \emph{permutational wreath product} $H\wr_I G$ is the group:
 $$ H\wr_I G:=W\rtimes_{\rho} G, $$
 where $G$ acts by shift on $W$ via $\rho$ i.e. $\rho(g): (h_i)_{i \in I} \mapsto (h_{g^{-1}i})_{i \in I}$, for $g \in G$. \smallskip \\
 When $I=G$, $H\wr_G G$ is simply called \emph{wreath product} and is denoted $H\wr G$.
\end{df}


\subsection{Property $PL^p$ for the permutational wreath product}

 To prove Theorem \ref{const_semidir_wreathflp}, we need the following structure of space with measured walls relative to the wreath product built in \cite{wreath}, Theorem 4.2 (see \cite{wreath} $\S$ 6.1 for examples of co-Haagerup subgroups) : 
 
 \begin{df}
  Let $G$ be a group and $L$ be subgroup of $G$. We say that $L$ is \emph{co-Haagerup} in $G$ if there exists a proper $G$-invariant conditionally negative definite kernel on $G/L$.
 \end{df}

 \begin{theo}[Cornulier, Stalder, Valette]\label{const_semidir_theoCSV}
  Let $H,G$ be countable discrete groups and let $L$ be a subgroup of $G$. We denote by $I$ the quotient $G/L$ and $W=\bigoplus_{I}H$. \\
  Suppose that $G$ is Haagerup and that $L$ is co-Haagerup in $G$. \\
  Then there exists a structure $(W\times I,\mu)$ of space with measured walls on $W\times I$, with wall pseudo-metric denoted by $d_{\mu}$, on which $W\rtimes G$ acts by automorphisms and which satisfies, for any $x_0=(w_0, i_0) \in W \times I$ and for all $g \in G$: \\
  $$ d_{\mu}((w,g)x_0,x_0)\rightarrow +\infty \text{ when }w \in W \text{ is such that } \supp (w) \rightarrow \infty \text{ in } I. $$
 \end{theo}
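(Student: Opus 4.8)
The plan is to assemble the required measured walls structure on $W\times I$ out of two families of walls and to reduce every assertion to the geometry of a single auxiliary structure on $I=G/L$.

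First I would extract the input data. Since $L$ is co-Haagerup in $G$ there is a proper $G$-invariant conditionally negative definite kernel on $I$, and together with the Haagerup property of $G$ this yields, through the equivalence between the Haagerup property and proper actions on spaces with measured walls recalled in the introduction (in its $G$-equivariant form, via the median-space techniques of \cite{chadruhag} and \cite{wreath}), a measured walls structure $(I,\nu)$ on which $G$ acts by automorphisms and whose wall pseudo-metric $d_{\nu}$ is proper, i.e. $d_{\nu}(i_0,j)\to+\infty$ as $j$ leaves every finite subset of $I$. For the fibre direction I would use only the naive walls on $H$, the singleton bipartitions, whose pseudo-metric is the discrete metric $1-\d_{h,h'}$; nothing further about $H$ is needed, in accordance with the statement.

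I would then define $\mu$ as a superposition of two families of walls on $W\times I$. The \emph{base family} is the pull-back of $(I,\nu)$ along $(w,i)\mapsto i$: for each $I$-wall $A$ the set $W\times A$, carrying the measure $\nu$. These are manifestly $W\rtimes G$-invariant (the $W$-factor acts trivially on the second coordinate and $\nu$ is $G$-invariant) and contribute exactly $d_{\nu}(i,i')$ to $d_{\mu}$, thereby accounting for the $G$-direction. The \emph{fibre family} must detect the configuration fibrewise while being distributed over $I$ by the walls $A$ of $\nu$, so that a support point lying far from the observer coordinate is separated from the basepoint by a large $\nu$-mass of walls: concretely one couples, for each $I$-wall $A$, each fibre $j$ and each naive $H$-wall, a bipartition of $W\times I$ that fires on the value at fibre $j$ only in conjunction with the side of $A$, the measure being $d\nu(A)$ tensored with counting measure on fibres and the naive measure on $H$.

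The main obstacle is calibrating this fibre family so that $d_{\mu}$ is simultaneously finite on \emph{every} pair and divergent when the support escapes. The difficulty is structural: since $G$ acts transitively on $I=G/L$, no $G$-invariant weight on the fibres can grow, so the escape of a support point can only be registered through a weight of the form $d_{\nu}(\cdot,j)$ coupling the fibre $j$ to the observer coordinate. A naive such coupling overcharges: from the identity $\{A:A\text{ separates }i,j\}\ds\{A:A\text{ separates }i',j\}=\{A:A\text{ separates }i,i'\}$ one sees that fibres \emph{outside} the support difference would each contribute $d_{\nu}(i,i')$, making $d_{\mu}$ infinite as soon as $i\neq i'$. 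Following \cite{wreath}, the resolution is to shape the bipartitions so that a fibre is charged only where the two configurations actually differ; then the separating mass of the fibre family between $(w,i)$ and $(w',i')$ is a finite sum over $j\in\supp(w^{-1}w')$, each term controlled by $d_{\nu}$, so $d_{\mu}$ stays finite. Verifying this cancellation, together with the $W\rtimes G$-invariance of the family (the naive $H$-walls form a translation-invariant family of bipartitions and $\nu$ is $G$-invariant), is the technical heart of the construction.

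The divergence is then immediate. For fixed $g$ and $x_0=(w_0,i_0)$ one has $(w,g)x_0=(w\,\rho(g)w_0,\,gi_0)$, and since $\rho(g)w_0$ has fixed finite support, the fibre at any $j\in\supp(w)$ leaving every finite set eventually coincides with the $j$-th coordinate of $w_0^{-1}w\,\rho(g)w_0$ and so lies in $\supp(w_0^{-1}w\,\rho(g)w_0)$; the fibre walls at such a $j$ separate $(w,g)x_0$ from $x_0$ with mass comparable to $d_{\nu}(i_0,j)$, which tends to $+\infty$ by properness of $d_{\nu}$. Hence $d_{\mu}((w,g)x_0,x_0)\to+\infty$ when $\supp(w)\to\infty$ in $I$, as required.
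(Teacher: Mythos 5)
You should be aware at the outset that the paper offers no proof of this statement to compare against: Theorem \ref{const_semidir_theoCSV} is imported wholesale from \cite{wreath} (Theorem 4.2) and used as a black box, so the only benchmark is Cornulier--Stalder--Valette's own argument. Measured against that, your outline gets the architecture right: the superposition of a base family pulled back from a $G$-invariant measured walls structure $(I,\nu)$, contributing exactly $d_{\nu}(i,i')$; the observation, via the identity $\{A \st A \text{ separates } i,j\}\ds\{A \st A \text{ separates } i',j\}=\{A \st A \text{ separates } i,i'\}$, that an uncoupled product-type fibre family would assign infinite mass to every pair with $i\neq i'$; and the closing divergence argument (for fixed $g$, a far-away $j\in\supp(w)$ eventually lies in $\supp(w_0^{-1}w\,\rho(g)w_0)$ and is charged a mass comparable to $d_{\nu}(i_0,j)$, which is proper) are all faithful to the strategy of \cite{wreath}.

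Nevertheless, as a proof the attempt has two genuine gaps. First, the decisive object is never constructed: you describe the fibre walls only as bipartitions that ``fire on the value at fibre $j$ in conjunction with the side of $A$'', and you expressly defer what you yourself call the technical heart --- the precise shaping of these bipartitions, the verification that $\mu$ is a Borel measure on $2^{W\times I}$ with $d_{\mu}(x,y)<+\infty$ for \emph{every} pair, and the $W\rtimes G$-invariance --- to \cite{wreath}. Since that is exactly the content of the theorem being proved, the argument is circular at its core. Second, your opening step is unjustified: the co-Haagerup hypothesis gives a proper $G$-invariant conditionally negative definite kernel $\psi$ on $I$, but the equivalence recalled in the introduction (Haagerup property $\Leftrightarrow$ proper action on a space with measured walls) concerns a group acting on itself, not the homogeneous space $G/L$; moreover a conditionally negative definite kernel is a squared Hilbert distance, whereas measured-walls pseudo-metrics are precisely the $L^1$-embeddable ones, so $(I,\nu)$ cannot simply be read off. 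One needs an additional argument (for instance that $\sqrt{\psi}$ is a Hilbertian, hence $L^1$-embeddable, $G$-invariant pseudo-metric, followed by the equivariant correspondence of \cite{chadruhag}) --- again one of the nontrivial inputs of \cite{wreath} that your sketch absorbs into a citation; relatedly, your construction never genuinely uses the Haagerup property of $G$, which the stated hypotheses include and which should make you suspicious of this step. Until the fibre walls are written down and these verifications carried out, what you have is a correct plan, not a proof.
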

 
 \begin{rmq}\label{rmk_CSV}
  The property ``$ d_{\mu}((w,g)x_0,x_0)\rightarrow +\infty$ when $w \in W$ is such that $\supp (w) \rightarrow \infty$ in $I$`` can be reformulated as follows : \\
  For all $R \geq 0$, there exists a finite set $J_R \subset I$ such that, for $(w,g) \in H\wr_I G$, $$d_{\mu}((w,g)x_0,x_0) \leq R \text{ implies } \supp(w) \subset J_R.$$
 \end{rmq}

 \begin{lem}\label{const_semidir_wreathflp_lem}
  Let $H,G$ be countable discrete groups, $L$ be a subgroup of $G$ and $q \geq 1$. We denote by $I$ the quotient $G/L$ and $W=\bigoplus_{I}H$. Suppose that $G$ is Haagerup, $L$ is co-Haagerup in $G$ and $H$ has property $PL^q$. \\
  Then $W$ and $G$ acts by automorphisms on a space $(X,\P,F(\P))$ with labelled partitions such that:
  \begin{itemize}
   \item the $W$-action is proper, \\
   \item the $G$-action is compatible with the $W$-action, \\
   \item the Banach space $F(\P)$ is isometrically isomorphic to a Banach subspace of a $L^q$ space. 
  \end{itemize}
 \end{lem}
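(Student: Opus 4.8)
The plan is to build $(X,\P,F(\P))$ as the natural space with labelled partitions on a direct product $X=(W\times I)\times W$, assembling two auxiliary structures, each of which controls one of the two distinct sources of properness for the $W$-action. For the first factor I would feed the measured walls structure $(W\times I,\mu)$ provided by Theorem \ref{const_semidir_theoCSV} into Proposition \ref{walls_labpart}, obtaining a space with labelled partitions $(W\times I,\P_1,L^q(\P_1,\mu))$ on which $W\rtimes G$ acts by automorphisms (the $W\rtimes G$-action by wall automorphisms induces an action by automorphisms of the labelled partitions structure, since measured walls homomorphisms are labelled partitions homomorphisms, and $\|c_1(x,y)\|_q^q=d_\mu(x,y)$). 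For the second factor, since $H$ has property $PL^q$, statement 1 of Theorem \ref{labpart_affact} yields a proper action of $H$ by automorphisms on a space $(H,\P_H,F_H(\P_H))$ with $F_H(\P_H)$ linearly isometrically embedded in an $L^q$ space; taking the direct sum of $I$ copies of this structure and invoking Proposition \ref{dirsum_action} gives an action of $W=\bigoplus_I H$ by automorphisms on $(W,\P_2,F_q(\P_2))$, where $F_q(\P_2)\simeq\bigoplus^q_I F_H(\P_H)$.

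The key point to establish is that $W\rtimes G$, and not merely $W$, acts by automorphisms on this second structure. I would let $G$ act on $(W,\P_2,F_q(\P_2))$ by the shift $\rho$ and verify that a labelling function $p^{\oplus_i}\in\P_2$ satisfies $p^{\oplus_i}\circ\rho(g)=p^{\oplus_{g^{-1}i}}\in\P_2$, so that $G$ merely permutes the factors. Since every factor carries the same structure $(H,\P_H,F_H(\P_H))$ and the norm on $F_q(\P_2)$ is the permutation-invariant $\ell^q$-sum, this permutation preserves both $\P_2$ and the norm. Combined with the left-translation action of $W$, this realises the action $(w_0,g_0)\cdot x=w_0\,\rho(g_0)(x)$ of $W\rtimes G$ by automorphisms on $(W,\P_2,F_q(\P_2))$. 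Letting $W\rtimes G$ act diagonally on $X=(W\times I)\times W$, through the diagonal embedding into $(W\rtimes G)\times(W\rtimes G)$ exactly as in Propositions \ref{dirsum_finite} and \ref{const_dirsumproper}, then produces an action of $W\rtimes G$ by automorphisms on the direct product structure $(X,\P,F(\P))$ with $F(\P)\simeq L^q(\P_1,\mu)\oplus^q F_q(\P_2)$.

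It remains to check the three conclusions. Compatibility of the $G$-action with the $W$-action is immediate: having a genuine action of the group $W\rtimes G$, in which $gwg^{-1}=\rho(g)(w)$, the relation of Definition \ref{const_semidir_comp} holds on the nose. For the embedding, $L^q(\P_1,\mu)$ is an $L^q$ space and $F_q(\P_2)\simeq\bigoplus^q_I F_H(\P_H)$ embeds isometrically into $\bigoplus^q_I L^q\simeq L^q$ by Proposition \ref{lpisomisom}; hence $F(\P)$ embeds isometrically into $L^q\oplus^q L^q\simeq L^q$, again by Proposition \ref{lpisomisom}. For properness of the $W$-action I would fix $x_0=((e_W,i_0),e_W)$ and use the $\ell^q$-splitting $\|c(w\cdot x_0,x_0)\|_q^q=d_\mu(w\cdot(e_W,i_0),(e_W,i_0))+\sum_{i\in\supp(w)}\|c_H(h_i,e_H)\|_{F_H(\P_H)}^q$ for $w=(h_i)_{i\in I}$. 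A bound $\|c(w\cdot x_0,x_0)\|_q\le R$ forces, via the first term and Remark \ref{rmk_CSV}, that $\supp(w)\subset J_{R^q}$ for a fixed finite set $J_{R^q}\subset I$, and via the second term and properness of the $H$-action, that each coordinate $h_i$ lies in a fixed finite subset of $H$; together these leave only finitely many admissible $w$.

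The main obstacle is the middle step: verifying that the shift and translation actions genuinely assemble into a $W\rtimes G$-action by automorphisms on the possibly infinite direct sum $(W,\P_2,F_q(\P_2))$, in particular that permuting infinitely many factors is norm-preserving, and then marrying this with the measured walls factor so that the two distinct properness mechanisms, namely support escaping to infinity in $I$ coming from Theorem \ref{const_semidir_theoCSV} and displacement within a fixed coordinate coming from property $PL^q$ of $H$, combine to trap $w$ in a finite set. The remaining norm identities are routine given Propositions \ref{dirsum_action} and \ref{lpisomisom}.
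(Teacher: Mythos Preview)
Your proposal is correct and follows essentially the same route as the paper: both build $X=(W\times I)\times W$, put the measured-walls $L^q$ structure from Theorem~\ref{const_semidir_theoCSV} and Proposition~\ref{walls_labpart} on the first factor and the $\ell^q$-direct sum of copies of $(H,\P_H,F_H(\P_H))$ on the second, let $W$ and $G$ act diagonally, and deduce properness by combining the support-control from Remark~\ref{rmk_CSV} with the coordinatewise properness coming from property $PL^q$ of $H$. Your treatment of the shift action of $G$ on $(W,\P_2,F_q(\P_2))$ is in fact slightly more explicit than the paper's, which simply asserts that $G$ acts by automorphisms by shift.
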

 
 \begin{proof}
  Consider the $W\rtimes G$-action on the space with measured walls $(W\times I,\mu)$ given by Proposition \ref{const_semidir_theoCSV}. Then, by Proposition \ref{walls_labpart}, $W\rtimes G$ acts by automorphisms on the space with labelled partitions $(W\times I,\P_{\mu},L^q(\P_{\mu},\mu))$. Let $y_0=(e_W,i_0) \in W \times I$. The separation map $c_{\mu}$ associated with $\P_{\mu}$ satisfies: 
  $$\|c_{\mu}((w,g)y_0,y_0)\|_q^q=d_{\mu}((w,g)y_0,y_0).$$
  Now, consider the structure of space with labelled partitions on $H$ given by its proper isometric affine action on a space $L^q(E,\nu)$. By Proposition \ref{dirsum_action}, $W$ acts by automorphisms on the natural structure of space with labelled partitions $(W,\P_W,F_q(\P_W))$ of the direct sum of spaces with labelled partitions on $H$. Moreover, $G$ acts by automorphisms on $(W,\P_W,F_q(\P_W))$ by shift via its action on $I$. \smallskip \\
  We denote $X=(W\times I) \times W$ and consider the space with labelled partitions $(X,\P,F(\P))$ given by the direct product of spaces with labelled partitions $(W\times I,\P_{\mu},L^q(\P_{\mu},\mu))$ and $(W,\P_W,F_q(\P_W))$. Then we have actions by automorphisms $\tau_W$ of $W$ and $\tau_G$ on $X$ given by, for $x=(w_1,i,w_2) \in X$, $w \in W$ and $g \in G$:
  $$ \tau_W(w)x=(ww_1,i,ww_2) \text{ and } \tau_G(g)x=(\rho(g)w_1,gi,\rho(g)w_2). $$
  The action $\tau_G$ is clearly compatible with $\tau_W$ since $W\rtimes_{\rho} G$ acts naturally on $W$ and on $W\times I$. \medskip \\
  The Banach space $F(\P)$ is isometrically isomorphic to the $q$-direct sum $L^q(\P_{\mu},\mu)\oplus F_q(\P_W) $, then $F(\P)$ is isometrically isomorphic to a Banach subspace of $ L^q(\P_{\mu},\mu) \oplus (\bigoplus_I^q L^q(E,\nu))$. It follows that $F(\P)$ is isometrically isomorphic to a Banach subspace of a $L^q$ space.
  We denote $x_0=(e_W,i_0,e_W) \in X$. We have, for $w=(h_i)_{i \in I} \in W$: \medskip \\
  \begin{tabular}{rl}
   $\|c(\tau_W(w)x_0,x_0)\|_{_{F(\P)}}^q$&$=\|c_{\mu}((w,i_0),(e_W,i_0))\|_q^q+\|c_{\P_W}(w,e_W)\|_{_{F_{q}(\P_{W})}}^q$ \smallskip \\
   &$\displaystyle =d_{\mu}((w,e_G)i_0,i_0)+\sum_{i \in \supp(w)}\|c_{\P_H}(h_i,e_H)\|_{_{F_H(\P_H)}}^q$ \\
  \end{tabular} \medskip \\
  Hence, $W$ acts properly by automorphisms on $(X,\P,F(\P))$: indeed, for $R \geq 0$ and $w=(h_i) \in W$, $\|c(\tau_W(w)x_0,x_0)\|_{_{F(\P)}} \leq R$ implies $d_{\mu}((w,e_G)i_0,i_0) \leq R^q$ and $\|c_{\P_H}(h_i,e_H)\|_{_{F_H(\P_H)}} \leq R$. 
  It follows that, for $R \geq 0$ and $J_{R^q} \subset I$ as in Remark \ref{rmk_CSV}, $\{ w \in W \st \|c(\tau_W(w)x_0,x_0)\|_{_{F(\P)}} \leq R \}$ is a subset of :
  $$ \left\lbrace w=(h_i) \st \supp(w) \subset J_{R^q} \text{ and } \{ h_i \in H \st \|c_{\P_H}(h_i,e_H)\|_{_{F_H(\P_H)}} \leq R \}\right\rbrace, $$
  which is a finite set as $J_{R^q}$ is a finite set and $H$-action is proper.

 \end{proof}

 \begin{proof}[Proof of Theorem \ref{const_semidir_wreathflp}]
  By Lemma \ref{const_semidir_wreathflp_lem}, $W$ and $G$ act by automorphisms on a space $(X,\P,F(\P))$ with labelled partitions such that the $W$-action is proper, and the $G$-action is compatible with the $W$-action with respect to $\rho$. Moreover, since $G$ is Haagerup, $G$ acts properly by automorphisms on a space $(Y,\P',F'(\P'))$ with labelled partitions where $F'(\P')$ isometrically isomorphic to a $L^q$ space. \\
  Hence, by Theorem \ref{const_semidir_labpart}, $H\wr_I G=W\rtimes_{\rho} G$ acts properly by automorphisms on a space $(Z,\P_Z,F_Z(\P_Z))$ where $F_Z(\P_Z)$ is isometrically isomorphic to $F(\P) \oplus F'(\P')$ endowed with the $q$-norm of the direct sum.
  It follows that $F_Z(\P_Z)$ is isometrically isomorphic to a Banach subspace of a $L^q$ space. \\
  Thus, by Corollary \ref{labpart_aflp}, $H\wr_I G$ has property $PL^q$.
 \end{proof}

 \section{Amalgamated free product}\label{freepartsec}
 
 In this section, we develop tools around the notion of tree of spaces in order to build a structure of space with labelled partitions on which an amalagmated free product acts by automorphisms given actions of the factors on some spaces with labelled partitions. \\
 
 \subsection{Labelled partitions on a tree of spaces}\label{free_tree_sec}
 
 A tree is a pair of sets $T=(V,\E)$, where $V$ is the set of vertices and $\E$ is the set of edges, together with an injective map $\E \rightarrow \{ \{v,w\} \st v\neq w \in V \}$; and satisfies that every two vertices are connected by a unique edge path, that is, a path without backtracking. \\
 The set of vertices $V$ can be endowed with a natural metric $d_T$ : the distance between two vertices is the number of edges in the edge path joining them. Moreover, we say that a vertex $u$ is between $v$ and $w$ in $V$ if $u$ is an endpoint of some edge which belongs to the edge path between $v$ and $w$.

 \begin{df}\label{free_tree_df}
 Let $T=(V,\E)$ be a tree, $\left(X_v\right)_{v \in V}$ and $\left(X_e\right)_{e \in \E}$ be collections of non empty sets such that there exists, for all $e=\{v,w\} \in \E$ : 
 $$ \s_{e,v}:X_e \longhookrightarrow X_{v}\text{ and }\s_{e,w}:X_e \longhookrightarrow X_{w}. $$
 The triple $\big(T,\left(X_v\right)_{v \in V},\left(X_e\right)_{e \in \E} \big)$ is called a \emph{tree of spaces}.
 We define the \emph{total space} $X$ associated with this tree of spaces as the disjoint union of the $X_v$'s :
 $$ X=\bigsqcup_{v \in V}X_v. $$ 
\end{df}

\begin{center}
 \includegraphics[width=16cm]{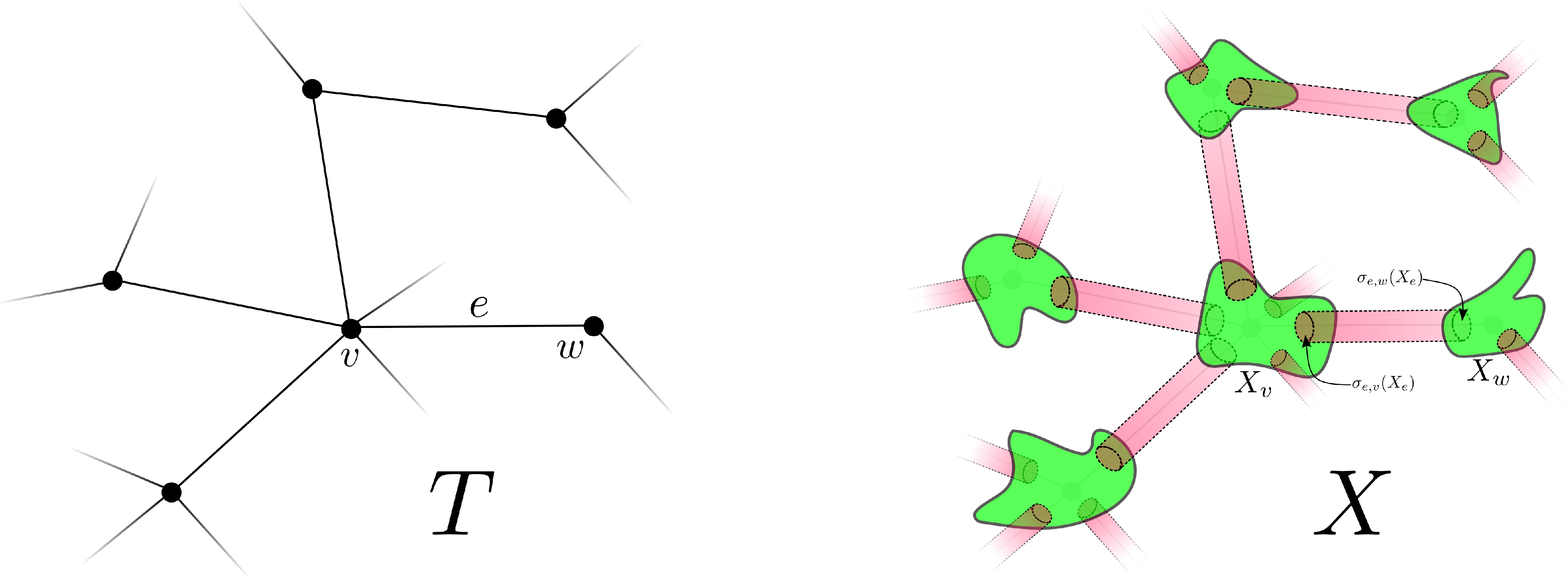} \\
  A tree $T$ and, in green, the total space $X$ of a tree of spaces of base $T$.
 \end{center}
 
 \begin{rmq}
  Some authors consider another definition for the total space $X$ of a tree of spaces (see, for instance, \cite{tu}) which keeps track of the adjacency in the base tree, namely, given an orientation of the edges : 
  $$X=\left(\bigsqcup_{v \in V}X_v \sqcup \bigsqcup_{e \in \E}(X_e \times [0,1])\right)/\sim ,$$
  where the identification $\sim$ is given by, for $e=(v,w) \in \E$, $X_e\times \{0\}\sim \s_{e,v}(X_e)$ and $X_e\times \{1\}\sim \s_{e,w}(X_e).$ This corresponds to the cylinders in the previous figure. \\
  In the present paper, we do not consider this additionnal data in the total space for a matter of simplicity.
 \end{rmq}

\begin{df}
 Let $v \in V$ and $x,y \in X$ with $x \in X_w$ and $y \in X_u$. We say that $X_v$ is between $x$ and $y$ if the vertex $v$ is between $w$ and $u$ in $T$ i.e. $v$ belongs to the vertex path joining $w$ to $u$.
\end{df}

\begin{rmq}
 In the case where the $X_v$'s are metric spaces, the total space $X$ can be naturally endowed with a metric which extends the metric of each $X_v$ and the tree metric (see \cite{guedar}). This metric on $X$ can also be obtained by the labelled partitions metric from the constructions we define in Definition \ref{free_natural} and Definition \ref{free_tree} when each $X_v$ is endowed with a structure of space with labelled partitions (in the case where edge sets are single points).
\end{rmq}


An automorphism of a tree is a bijection $f$ of the vertex set such that $f(v)$ and $f(w)$ are connected by an edge if and only if $v$ and $w$ are connected by an edge. From this notion, we describe what is an automorphism of a tree of spaces :

\begin{df}[Automorphisms of tree of spaces]\label{autom_tree}
Let $\mathcal{X}=\big(T,\left(X_v\right)_{v \in V},\left(X_e\right)_{e \in \E} \big)$ be a tree of spaces and $X$ be its total space. We denote by $\rho : X \rightarrow V$ the natural projection given by $x \in X_v \mapsto v$. \smallskip \\
We say that a bijection $\vp: X \rightarrow X$ is an automorphism of $\mathcal{X}$ if :
\begin{enumerate}
 \item There exists $\widetilde{\vp}:V\rightarrow V$ such that $\widetilde{\vp}$ is an automorphism of $T$ and :
 $$ \widetilde{\vp}\circ \rho = \rho \circ \vp. $$
 \item The restriction $\vp_{|_{\s_{e,v}(X_e)}}$ induces a bijection from $\s_{e,v}(X_e)$ to $\s_{\widetilde{\vp}(e),\widetilde{\vp}(v)}(X_{\widetilde{\vp}(e)})$.
\end{enumerate}

\end{df}

\begin{rmq}\label{free_autom_rmq}
 Let $\vp$ be an automorphism of $\mathcal{X}$.
 \begin{enumerate}
  \item The restriction $\vp_{|_{X_v}}$ is a bijection from $X_v$ to $X_{\widetilde{\vp}(v)}$. \\
  \item The map $\widehat{\vp}_{e,v}:= \s_{\widetilde{\vp}(e),\widetilde{\vp}(v)}^{-1} \circ \vp \circ \s_{e,v}$ is a bijection from $X_e$ to $X_{\widetilde{\vp}(e)}$. \\
  \item The map $\vp^{-1}$ is an automorphism of $\mathcal{X}$ and we have $\widetilde{\vp^{-1}}=\widetilde{\vp}^{-1}$.
 \end{enumerate}

\end{rmq}

Subsequently, we consider a tree of spaces where the edge sets are reduced to single points : \\
Let $\big(T,\left(X_v\right)_{v \in V},\left(\{\bu_e\}\right)_{e \in \E} \big)$ be a tree of spaces and $X$ be its total space.

\begin{center}
 \includegraphics[width=13cm]{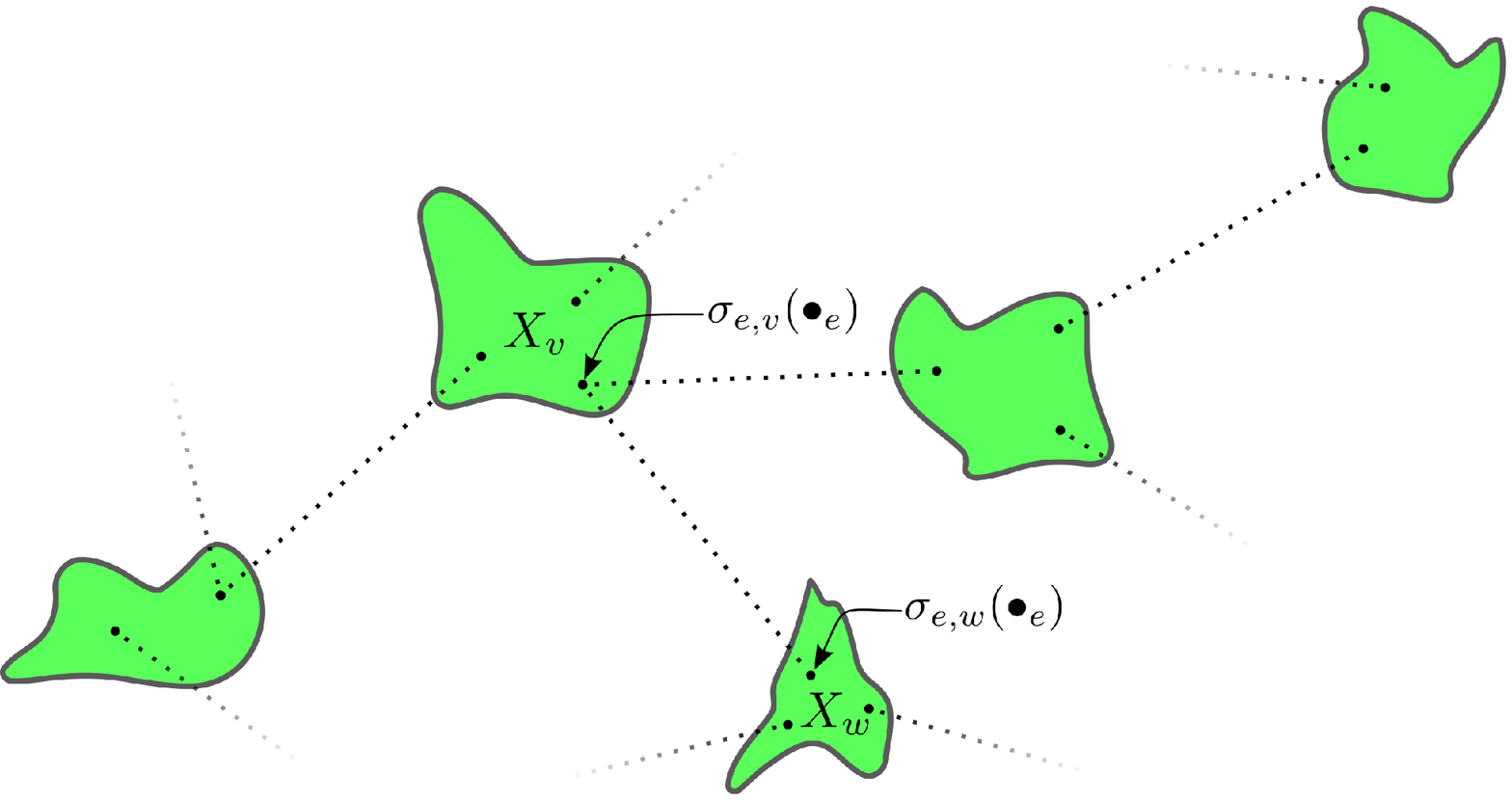} \\
  The total space of a tree of spaces whose edge sets are singletons.
 \end{center}

\begin{df}[Projection on vertex sets]\label{free_proj}
 Let $v \in V$. The map $\pi_v : X \rightarrow X_v$ defined by, for $x \in X_w$ with $w \in V$ :
 $$  \pi_{v}(x)=\left \{
    \begin{array}{ll}
       x &\text{ if }w=v, \\
       \s_{e,v}(\bu_e) &\text{ if } w\neq v, \\
       
    \end{array}
    \right .$$
    where $e$ is the first edge in the edge path in $T$ from $v$ to $w$.
\end{df}

\begin{center}
 \includegraphics[width=11cm]{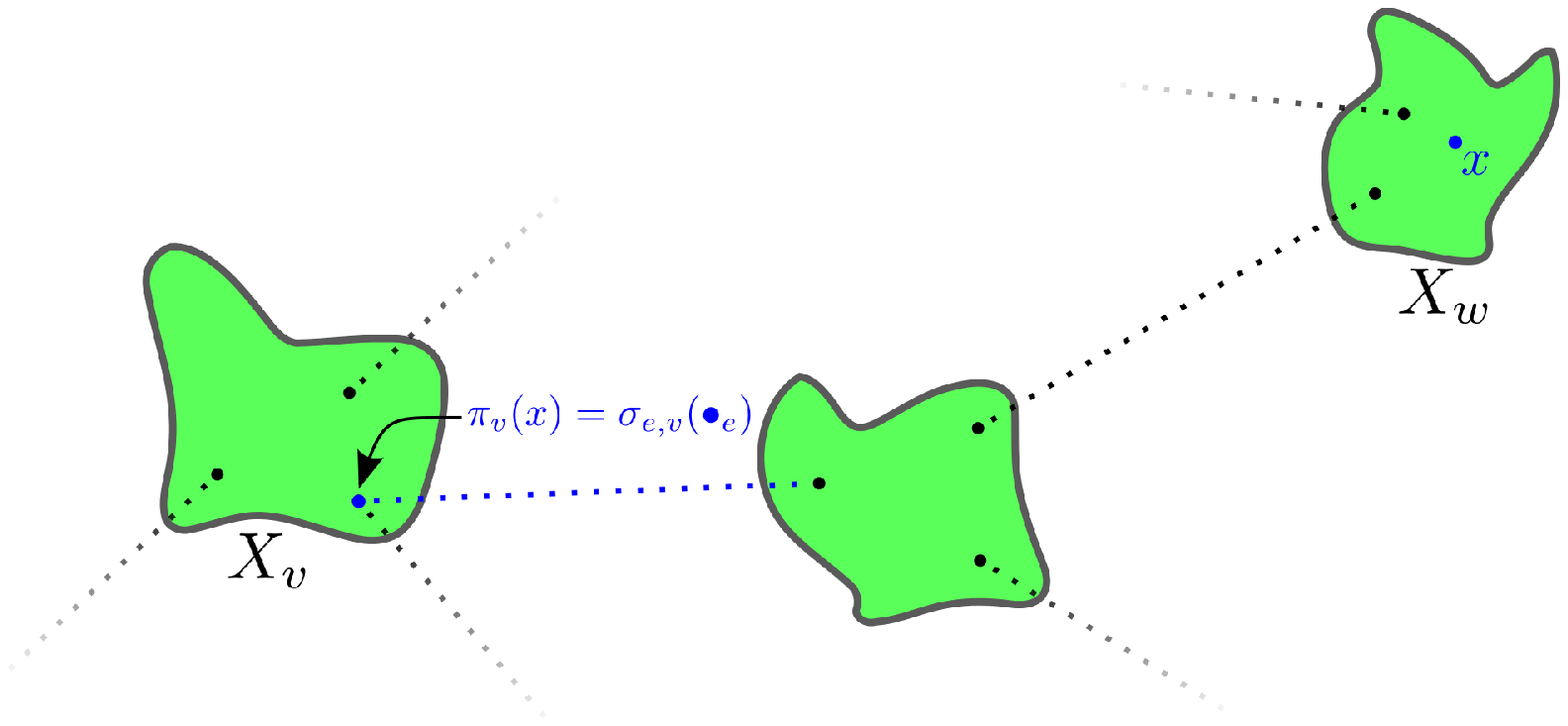} \\
  The projection $\pi_v$ on $X_v$.
 \end{center}

\begin{lem}\label{free_proj_lem}
 Let $x,y \in X$ and $v \in V$. If $\pi_v(x) \neq \pi_v(y)$ then $X_v$ is between $x$ and $y$. In particular, the set $\{v \in V \st \pi_v(x) \neq \pi_v(y) \}$ is finite. 
\end{lem}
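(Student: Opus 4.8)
The plan is to prove the first assertion by contraposition, after which the finiteness statement comes essentially for free. Write $x \in X_w$ and $y \in X_u$, and suppose $X_v$ is \emph{not} between $x$ and $y$, i.e.\ the vertex $v$ does not lie on the edge path from $w$ to $u$. The goal is to conclude $\pi_v(x)=\pi_v(y)$. The first step is to record that $v\neq w$ and $v\neq u$: indeed $w$ and $u$ are endpoints of the path from $w$ to $u$, hence lie on it, so neither can equal $v$. Consequently both projections fall into the second case of Definition \ref{free_proj}, that is $\pi_v(x)=\s_{e,v}(\bu_e)$ and $\pi_v(y)=\s_{e',v}(\bu_{e'})$, where $e$ (resp.\ $e'$) denotes the first edge of the edge path from $v$ to $w$ (resp.\ from $v$ to $u$).

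The heart of the matter is then to show $e=e'$, which I would argue by contradiction. If $e\neq e'$, consider the concatenation of the path from $w$ to $v$ with the path from $v$ to $u$. This concatenation is a path without backtracking: each half is backtrack-free, and the only place a backtrack could be created is at the junction $v$, which would force the last edge $e$ of the first half to coincide with the first edge $e'$ of the second half. Since we are assuming $e\neq e'$, no such backtrack occurs, so by the defining uniqueness property of the tree $T$ this concatenation \emph{is} the edge path from $w$ to $u$; but it visits $v$, contradicting $v\notin$ the $w$--$u$ path. Hence $e=e'$, and therefore $\pi_v(x)=\s_{e,v}(\bu_e)=\s_{e',v}(\bu_{e'})=\pi_v(y)$, which is exactly the contrapositive of the first claim. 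For finiteness, the containment just proved gives $\{v\in V\st \pi_v(x)\neq\pi_v(y)\}\subseteq\{v\in V\st X_v \text{ is between } x \text{ and } y\}$, and the right-hand set is precisely the set of vertices lying on the edge path from $w$ to $u$; since $d_T(w,u)$ is finite, this path has finitely many vertices and the set is finite.

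I expect the main obstacle to be purely the junction argument establishing $e=e'$: one must take care that a walk which fails to backtrack at its single interior vertex really is a legitimate non-backtracking edge path, so that the tree's uniqueness axiom applies to it. The only genuine edge case to check separately is $w=u$ (so that $x$ and $y$ sit over the same vertex), where the $w$--$u$ path is the single vertex $w$ and the identity $e=e'$ holds trivially because the two paths share the same endpoint; everything else is routine bookkeeping with Definition \ref{free_proj}.
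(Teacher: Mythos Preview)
Your proof is correct and follows essentially the same contrapositive approach as the paper: assume $v$ is not on the $w$--$u$ path, observe $v\neq w,u$, and conclude that the first edges of the $v$--$w$ and $v$--$u$ paths coincide so that $\pi_v(x)=\s_{e,v}(\bu_e)=\pi_v(y)$. The only difference is that the paper simply asserts the coincidence of these first edges, whereas you supply the concatenation-and-uniqueness argument to justify it; your version is thus a more detailed rendering of the same proof.
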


\begin{proof}
 Let $x \in X_w$ and $y \in X_u$ with $w,u \in V$. If $v$ is not between $w$ and $u$ in $T$, then $v \neq w$, $v \neq u$ and moreover the edge path joining $v$ to $w$ and the edge path joining $v$ to $u$ in $T$ coincide at least on the first edge $e$. Hence, by definition : 
 $$\pi_{v}(x)=\s_{e,v}(\bu_e)=\pi_{v}(y).$$
\end{proof}

\begin{lem}\label{free_act_proj}
 Let $\vp$ be an automorphism of a tree of spaces $\mathcal{X}=\big(T,\left(X_v\right)_{v \in V},\left(\{\bu_e\}\right)_{e \in \E} \big)$. Then for $v \in V$, we have :
 
 $$ \vp \circ \pi_v=\pi_{\widetilde{\vp}(v)} \circ \vp. $$
\end{lem}

\begin{proof} Let us show that, for all $x$ in the total space $X$ of $\mathcal{X}$, $\vp(\pi_v(x))=\pi_{\widetilde{\vp}(v)}(\vp(x))$. 
 If $x \in X_v$, the identity is clear since $\vp(x)$ belongs to $X_{\widetilde{\vp}(v)}$. Now, assume that $x \in X_w$ with $w \neq v$ and denote by $e$ the first edge in the edge path joining $v$ to $w$ in $T$. As $\widetilde{\vp}$ is an automorphism of $T$, $\widetilde{\vp}(e)$ is the first edge in the edge path joining $\widetilde{\vp}(v)$ to $\widetilde{\vp}(w)$. Hence, since $\vp(x) \in X_{\widetilde{\vp}(w)}$, we have, by Remark \ref{free_autom_rmq}, 2.  :
 $$\pi_{\widetilde{\vp}(v)}(\vp(x))=\s_{\widetilde{\vp}(e),\widetilde{\vp}(v)}(\bu_{\widetilde{\vp}(e)})=\vp(\s_{e,v}(\bu_{e}))=\vp(\pi_v(x)). $$
\end{proof}

\begin{lem}\label{free_act_phibet}
 Let $\vp$ be an automorphism of a tree of spaces $\big(T,\left(X_v\right)_{v \in V},\left(\{\bu_e\}\right)_{e \in \E} \big)$. For $x,y \in X$, we have :
 
 $$ \{v \in V \st \pi_v(\vp(x)) \neq \pi_v(\vp(y)) \}=\widetilde{\vp}\left( \{v \in V \st \pi_v(x) \neq \pi_v(y) \} \right).$$
\end{lem}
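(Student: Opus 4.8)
The plan is to reduce everything to the commutation relation $\vp \circ \pi_v = \pi_{\widetilde{\vp}(v)} \circ \vp$ established in Lemma \ref{free_act_proj}, together with the two bijectivity facts: $\vp$ is a bijection of the total space $X$, and $\widetilde{\vp}$ is an automorphism, hence a bijection, of $V$ with inverse $\widetilde{\vp}^{-1}=\widetilde{\vp^{-1}}$ (Remark \ref{free_autom_rmq}). No deeper structure of the tree of spaces is needed beyond these.

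First I would re-index the relation of Lemma \ref{free_act_proj} by the \emph{target} vertex. Given any $w \in V$, I set $v = \widetilde{\vp}^{-1}(w)$, so that $\widetilde{\vp}(v)=w$, and substitute into Lemma \ref{free_act_proj} to obtain
$$ \pi_w \circ \vp = \vp \circ \pi_{\widetilde{\vp}^{-1}(w)}. $$
Next I would fix $x,y \in X$ and evaluate this identity: for every $w \in V$ we get $\pi_w(\vp(x)) = \vp\big(\pi_{\widetilde{\vp}^{-1}(w)}(x)\big)$ and the analogous equality for $y$. Since $\vp$ is injective, $\pi_w(\vp(x)) \neq \pi_w(\vp(y))$ holds if, and only if, $\pi_{\widetilde{\vp}^{-1}(w)}(x) \neq \pi_{\widetilde{\vp}^{-1}(w)}(y)$. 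In other words, $w$ lies in the left-hand set of the statement exactly when $\widetilde{\vp}^{-1}(w)$ lies in $\{v \in V \st \pi_v(x) \neq \pi_v(y)\}$; by the definition of the image of a set under $\widetilde{\vp}$, this is precisely the condition $w \in \widetilde{\vp}\big(\{v \in V \st \pi_v(x) \neq \pi_v(y)\}\big)$, which yields the desired equality of sets.

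I do not anticipate a genuine obstacle here: the argument is a direct transport of the defining condition along $\vp$ and $\widetilde{\vp}$. The only point demanding a little care is the bookkeeping of $\widetilde{\vp}^{-1}$ when passing from the source index $v$ to the target index $w$; handling this correctly requires invoking Remark \ref{free_autom_rmq} to know that $\widetilde{\vp}$ is invertible with $\widetilde{\vp}^{-1}=\widetilde{\vp^{-1}}$, and invoking the injectivity of $\vp$ to turn the equality of projected points into the equality of their $\vp$-images.
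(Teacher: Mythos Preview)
Your proof is correct and follows essentially the same approach as the paper: both arguments apply Lemma \ref{free_act_proj} to rewrite $\pi_v(\vp(x))$ as $\vp(\pi_{\widetilde{\vp}^{-1}(v)}(x))$, then use the injectivity of $\vp$ to cancel it and obtain the equivalence $\pi_v(\vp(x)) \neq \pi_v(\vp(y)) \Leftrightarrow \pi_{\widetilde{\vp}^{-1}(v)}(x) \neq \pi_{\widetilde{\vp}^{-1}(v)}(y)$, from which the set equality follows. Your write-up is slightly more explicit about the re-indexing step and the appeal to Remark \ref{free_autom_rmq}, but the substance is identical.
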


\begin{proof}
 Let $x,y \in X$. By Lemma \ref{free_act_proj}, we have, for $v \in V$ : \smallskip
 
 \begin{center}
 \begin{tabular}{rl}
  $\pi_v(\vp(x)) \neq \pi_v(\vp(y))$&$\Leftrightarrow \vp (\pi_{\widetilde{\vp}^{-1}(v)}(x)) \neq \vp(\pi_{\widetilde{\vp}^{-1}(v)}(y))$ \smallskip \\
  $\;$&$\Leftrightarrow \pi_{\widetilde{\vp}^{-1}(v)}(x) \neq \pi_{\widetilde{\vp}^{-1}(v)}(y), \;$ since $\vp$ is a bijection. \\
 \end{tabular}
 \end{center}
 Thus, 
 \begin{center}
 \begin{tabular}{rl}
  $\{v \in V \st \pi_v(\vp(x)) \neq \pi_v(\vp(y)) \}$&$=\{v \in V \st \pi_{\widetilde{\vp}^{-1}(v)}(x) \neq \pi_{\widetilde{\vp}^{-1}(v)}(y) \}$ \smallskip \\
  $\;$&$=\{\widetilde{\vp}(v) \in V \st \pi_v(\vp(x)) \neq \pi_v(\vp(y)) \}.$ \\
 \end{tabular}
 \end{center}

\end{proof}

\begin{exemple}\label{free_example}
 We consider an amalgamated free product $\G=G*_C H$, together with the natural action on its Bass-Serre tree $T=(V,\E)$ where :
 $$V= \G/G \sqcup \G/H \text{ and } \E=\G/C,$$
 and the endpoints maps are given by the inclusions of $C$ left-cosets into $G$ and $H$ left-cosets. \\
 
 For our purpose, we construct the following tree of space of base $T$ : \medskip \\
 - For $v=\g G \in V$, we consider $X_v = \g G/C=\{\g g C \st g\in G\}$ and for $v=\g H$, we set $X_v = \g H/C$. \smallskip \\
 - For $e=\g C \in \E$, we consider the singleton $X_e=\{\g C\}$. The structural maps $\s_{\g C,\g G}$ and $\s_{\g C,\g H}$ are the trivial maps $\g C \mapsto \g C \in \g G/C$ and $\g C \mapsto \g C \in \g H/C$. \smallskip \\
 
 These datas give rise to a tree of spaces $\mathcal{X}=(T,\{X_v\},\{X_e\})$ on which $\G$ acts by automorphisms of tree of spaces. \\
 In fact, every $\g \in \G$ defines a bijection of the total space $X$ by :
 $$\g'g C \in X_{\g'G} \mapsto \g\g'g C \in X_{\g\g'G}$$ and,
 $$\g'h C \in X_{\g'H} \mapsto \g\g'h C \in X_{\g\g'H}.$$
 Moreover, the map $\widetilde{\g}:V \rightarrow V$ is exactly the map $v \mapsto \g v$ given by the action of $\G$ on $T$ and we have : 
 $$\s_{\g\g' C,\g\g' G}(\bu_{\g\g' C})= \g\g' C = \g \s_{\g'C,\g'G}(\bu_{\g'C}).$$
 Thus $\s_{\g\g' C,\g\g' G}^{-1}\circ \g \circ \s_{\g'C,\g'G}$ is a bijection and similarly, $\s_{\g\g' C,\g\g' H}^{-1}\circ \g \circ \s_{\g'C,\g'H}$ is a bijection, for all $\g' \in \G$.
\end{exemple}

\begin{df}\label{free_example_df}
 Let $\G=G*_C H$ be an amalgamated and $T$ be its Bass-Serre tree. We call \emph{tree of $C$-cosets spaces associated with $\G$}, the tree of spaces $(T,\{X_{v}\},\{X_e\})$ defined in Example \ref{free_example}.
\end{df}

\begin{center}
 \includegraphics[width=11cm]{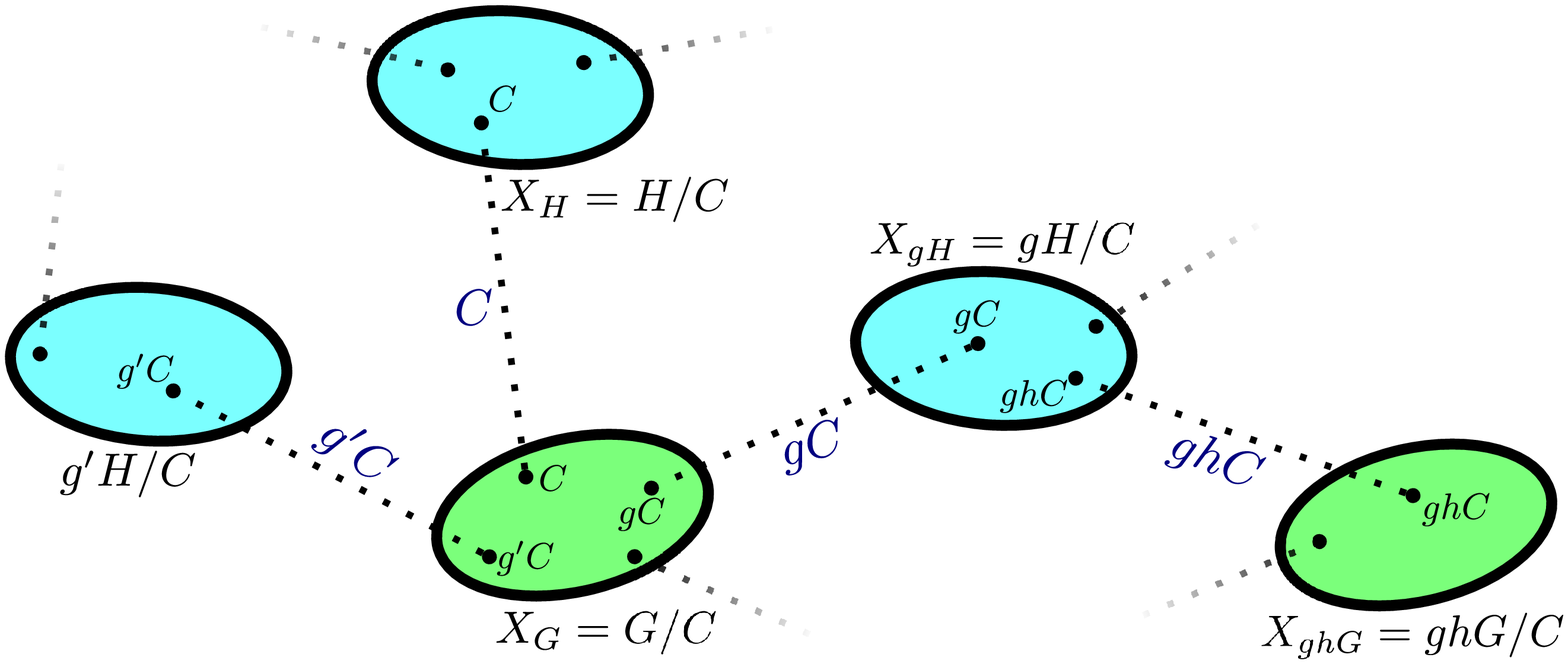} \\
  The tree of $C$-cosets spaces associated with $G*_C H$.
 \end{center}

 
 \subsubsection{Labelled partitions induced by the vertex sets}
 
 \begin{df}
 Let $\big(T,\left(X_v\right)_{v \in V},\left(\{\bu_e\}\right)_{e \in \E} \big)$ be a tree of spaces and $X$ be its total space. Assume that each vertex set $X_v$ is endowed with a structure of space with labelled partitions $(X_v,\P_v,F_v(\P_v))$.

 Let $v \in V$. We set, for $p_v \in \P_v$, the following labelling function on $X$ :
 $$ p_v^{\oplus_v}=p_v\circ \pi_{v}, $$
 and we denote $ \P_v^{\oplus_v}=\{p_v^{\oplus_v}\st p_v \in \P_v \}$. \\
 The set :
 $$ \P_X=\bigcup_{v \in V} \P_v^{\oplus_v} $$
 is called the \emph{family of labelling functions induced by the vertex sets}.
 \end{df}
 
  \begin{center}
   \includegraphics[width=14cm]{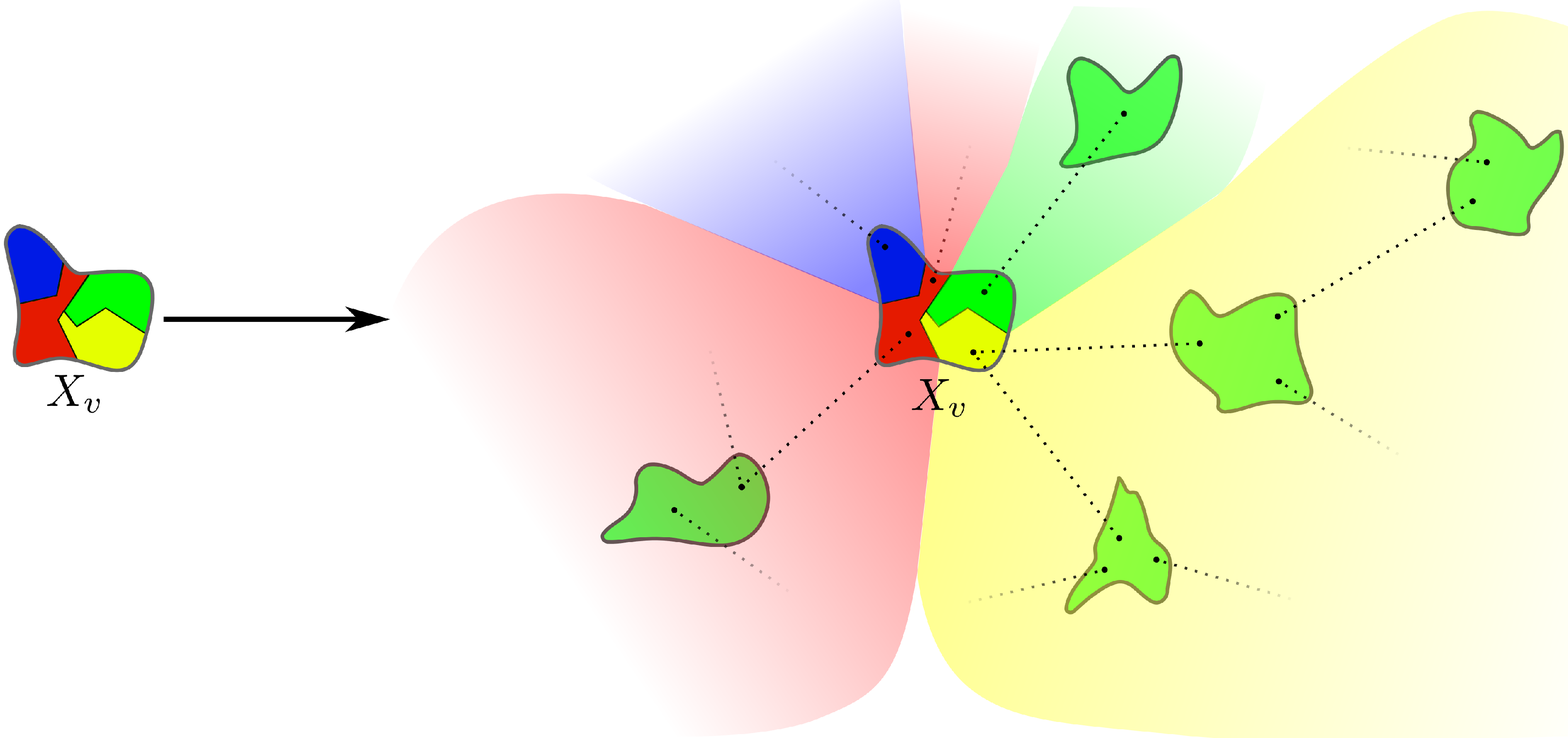} \\
   Partition of $X$ induced by a partition of $X_v$ via the projection $\pi_v$.
  \end{center}

\begin{df}
Let $\big(T,\left(X_v\right)_{v \in V},\left(\{\bu_e\}\right)_{e \in \E} \big)$ be a tree of spaces and $X$ be its total space. Assume that each vertex set $X_v$ is endowed with a structure of space with labelled partitions $(X_v,\P_v,F_v(\P_v))$. \\
 Let $v\in V$. For $\k \in F_v(\P_v)$, we denote $\k^{\oplus_v}: \P_{X} \rightarrow \K$ the function :
\begin{center}\begin{equation*}
   \k^{\oplus_v}(p)=\begin{cases} \k(p_v)\text{ if }p=p_v^{\oplus_v} \in \P_v^{\oplus_v} \\
                            0\;\;\;\;\:\:\text{ otherwise.}
                           \end{cases}
   \end{equation*}
  \end{center}
  Let $q \geq 1$. We set : 
  $$ E_q(\P_{X}):=\left\lbrace \sum_{v \in V} \k_{v}^{\oplus_v} \st \k_{v} \in F_v(\P_v)\text{ with }\k_{v} = 0  \text{ for all but finitely many vertices }v \right\rbrace,$$ endowed with the norm $\|.\|_{q}$ defined by, for $\k=\sum_{v} \k_{v}^{\oplus_v}$:
  $$ \|\k\|_{q}:=\left(\sum_{v}\|\k_{v}\|_{_{F_v(\P_v)}}^q\right)^{\frac{1}{q}}.$$
  The Banach space $F_q(\P_{X}):=\overline{E_q(\P_{X})}^{\|.\|_{q}}$ is called the \emph{$q$-space of functions on $\P_{X}$ of $X$}.
\end{df}

\begin{prop}[Labelled partitions structure on $X$ induced by the vertex sets]\label{const_labfree_prop} $\;$ \\
Let $\big(T,\left(X_v\right)_{v \in V},\left(\{\bu_e\}\right)_{e \in \E} \big)$ be a tree of spaces and $X$ be its total space. Assume that each vertex set $X_v$ is endowed with a structure of space with labelled partitions $(X_v,\P_v,F_v(\P_v))$. Consider $X$ together with its family $\P_{X}$ of labelling functions induced by the vertex sets.  \medskip \\
 Let $q \geq 1$ and $F_q(\P_{X})$ be the $q$-space of functions on $\P_{X}$ of $X$. Then, the triple $(X,\P_{X},F_q(\P_{X}))$ is a space with labelled partitions.
 Moreover, we have, for $x,y \in X$ :
 $$ \|c(x,y)\|_{_{F_q(\P_{X})}}^q=\sum_{v \in V}\|c_v(\pi_v(x),\pi_v(y))\|_{_{F_v(\P_v)}}^q $$
\end{prop}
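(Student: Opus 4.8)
The plan is to treat $(X,\P_X,F_q(\P_X))$ as a ``tree-indexed'' version of the direct-sum construction of Proposition \ref{dirsum_banach}, the only new geometric ingredient being the finiteness statement of Lemma \ref{free_proj_lem}. First I would dispose of the Banach-space requirement of Definition \ref{labpart_space}: exactly as in Proposition \ref{dirsum_banach}, the assignment $\sum_{v}\k_v^{\oplus_v}\mapsto(\k_v)_{v\in V}$ is a linear isometry from $E_q(\P_X)$ onto the finitely supported tuples of $\rexp{q}{\bigoplus_{v\in V}}F_v(\P_v)$, so that its completion $F_q(\P_X)$ is isometrically isomorphic to this $\ell^q$-direct sum and is in particular a Banach space. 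This legitimises the norm $\|.\|_q$ and lets me identify $F_q(\P_X)$ with $\rexp{q}{\bigoplus_{v\in V}}F_v(\P_v)$ throughout.

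Next I would compute the separation map blockwise. For $x,y\in X$ and $p_v^{\oplus_v}\in\P_v^{\oplus_v}$, unwinding the definition $p_v^{\oplus_v}=p_v\circ\pi_v$ gives
$$ c(x,y)(p_v^{\oplus_v})=p_v(\pi_v(x))-p_v(\pi_v(y))=c_v(\pi_v(x),\pi_v(y))(p_v), $$
so that, under the identification above, $c(x,y)$ corresponds to the tuple $\big(c_v(\pi_v(x),\pi_v(y))\big)_{v\in V}$. The crucial point is that this tuple has finite support: by Lemma \ref{free_proj_lem} the set $S=\{v\in V\st\pi_v(x)\neq\pi_v(y)\}$ is finite, and for $v\notin S$ one has $\pi_v(x)=\pi_v(y)$, hence $c_v(\pi_v(x),\pi_v(y))=0$ in $F_v(\P_v)$. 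Since each $c_v(\pi_v(x),\pi_v(y))$ lies in $F_v(\P_v)$ (because $(X_v,\P_v,F_v(\P_v))$ is a space with labelled partitions), it follows that
$$ c(x,y)=\sum_{v\in S}\big(c_v(\pi_v(x),\pi_v(y))\big)^{\oplus_v}\in E_q(\P_X)\subset F_q(\P_X). $$

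This membership is precisely the defining condition of Definition \ref{labpart_space}, so $(X,\P_X,F_q(\P_X))$ is a space with labelled partitions, and reading off the $\|.\|_q$-norm of the finite sum above yields
$$ \|c(x,y)\|_{_{F_q(\P_X)}}^q=\sum_{v\in S}\|c_v(\pi_v(x),\pi_v(y))\|_{_{F_v(\P_v)}}^q=\sum_{v\in V}\|c_v(\pi_v(x),\pi_v(y))\|_{_{F_v(\P_v)}}^q, $$
the last equality holding because every omitted term vanishes. I expect the only real obstacle to be bookkeeping rather than mathematics: one must make sure the formal block decomposition $\sum_v\k_v^{\oplus_v}$ and the isometry onto the $\ell^q$-direct sum are set up consistently (as in Proposition \ref{dirsum_banach}); once the finiteness furnished by Lemma \ref{free_proj_lem} is in hand, both the membership of $c(x,y)$ and the norm formula are immediate.
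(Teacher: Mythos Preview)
Your proposal is correct and follows essentially the same approach as the paper: compute $c(x,y)$ blockwise via $c(x,y)(p_v^{\oplus_v})=c_v(\pi_v(x),\pi_v(y))(p_v)$, invoke Lemma \ref{free_proj_lem} to see the resulting sum $\sum_v c_v(\pi_v(x),\pi_v(y))^{\oplus_v}$ is finitely supported and hence lies in $E_q(\P_X)\subset F_q(\P_X)$, and read off the norm. You are in fact a bit more thorough than the paper, which leaves the Banach-space verification and the explicit norm formula implicit.
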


\begin{proof}
We denote by $c_v$ the separation map of $X_v$ associated with $\P_v$ and by $c_{X}$ the separation map associated with $\P_{X}$. \smallskip \\
Let $x,y \in X$ and $v \in V$. For $p_v^{\oplus_v} \in \P_v^{\oplus_v}$, we have: 
$$c_{X}(x,y)(p_v^{\oplus_v})=p_v(\pi_{v}(x))-p_v(\pi_{v}(y))=c_v(\pi_{v}(x),\pi_{v}(y))(p_v).$$
It follows that $c_{X}(x,y)=\sum_{v} c_v(\pi_{v}(x),\pi_{v}(y))^{\oplus_v}$ which is a finite sum since $\pi_{v}(x)=\pi_{v}(y)$ for all but finitely many $v$'s by Lemma \ref{free_proj_lem}. Thus, $c_{X}(x,y)$ belongs to $F_q(\P_{X})$ and hence, $(X,\P_{X},F_q(\P_{X}))$ is a space with labelled partitions. \\
\end{proof}

\begin{df}\label{free_natural}
  Let $\big(T,\left(X_v\right)_{v \in V},\left(\{\bu_e\}\right)_{e \in \E} \big)$ be a tree of spaces and $X$ be its total space. Assume that each vertex set $X_v$ is endowed with a structure of space with labelled partitions $(X_v,\P_v,F_v(\P_v))$. Consider $X$ together with its family $\P_{X}$ of labelling functions induced by the vertex sets and let $F_q(\P_{X})$ be the $q$-space of functions on $\P_{X}$ of $X$. \medskip \\
  The triple $(X,\P_{X},F_q(\P_{X}))$ is called the \emph{space with labelled partitions on $X$ induced by the vertex sets}.
\end{df}

\begin{prop}\label{free_autom_part}
 Let $\mathcal{X}=\big(T,\left(X_v\right)_{v \in V},\left(\{\bu_e\}\right)_{e \in \E} \big)$ be a tree of spaces and $X$ be its total space. Assume that each vertex set $X_v$ is endowed with a structure of space with labelled partitions $(X_v,\P_v,F_v(\P_v))$ and let $(X,\P_{X},F_q(\P_{X}))$ be the space with labelled partitions on $X$ induced by the vertex sets. \smallskip \\
 Let $\vp$ be an automorphism of $\mathcal{X}$. If, for all $v \in V$, the map $\vp_{|_{X_v}}:X_v \rightarrow X_{\widetilde{\vp}(v)}$ is a homomorphism of spaces with labelled partitions, then $\phi$ is an automorphism of space with labelled partitions of $(X,\P_{X},F_q(\P_{X}))$. 
\end{prop}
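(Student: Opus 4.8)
The plan is to verify directly that $\vp$ meets the two defining conditions of a homomorphism of spaces with labelled partitions in Definition \ref{labpart_homo}, and then to observe that the same reasoning applies to $\vp^{-1}$, so that $\vp$ is an automorphism of $(X,\P_X,F_q(\P_X))$. Throughout I would use that, by the first item of Remark \ref{free_autom_rmq}, each restriction $\vp_{|_{X_v}}$ is a bijection from $X_v$ onto $X_{\widetilde{\vp}(v)}$, and that both $\P_X=\bigcup_{v\in V}\P_v^{\oplus_v}$ and $F_q(\P_X)=\overline{E_q(\P_X)}$ decompose over the vertices, so that everything can be read off vertex by vertex.

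First I would check condition~1, namely $p\circ\vp\in\P_X$ for every $p\in\P_X$. Writing $p=p_v^{\oplus_v}=p_v\circ\pi_v$ for some $v\in V$ and $p_v\in\P_v$, Lemma \ref{free_act_proj}, read in the form $\pi_v\circ\vp=\vp\circ\pi_{\widetilde{\vp}^{-1}(v)}$ (obtained by substituting $\widetilde{\vp}^{-1}(v)$ for $v$), gives
$$ p\circ\vp=p_v\circ\pi_v\circ\vp=\big(p_v\circ\vp_{|_{X_w}}\big)\circ\pi_{w}, \qquad w:=\widetilde{\vp}^{-1}(v), $$
where $\vp_{|_{X_w}}\colon X_w\to X_v$ because $\widetilde{\vp}(w)=v$. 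Since $\vp_{|_{X_w}}$ is by hypothesis a homomorphism, $p_v\circ\vp_{|_{X_w}}\in\P_w$, hence $p\circ\vp=(p_v\circ\vp_{|_{X_w}})^{\oplus_w}\in\P_w^{\oplus_w}\subset\P_X$. This also records that $\Phi_\vp$ carries $\P_v^{\oplus_v}$ into $\P_{\widetilde{\vp}^{-1}(v)}^{\oplus_{\widetilde{\vp}^{-1}(v)}}$ through the map $\Phi_{\vp_{|_{X_w}}}$.

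Next I would establish condition~2 on the dense subspace $E_q(\P_X)$ and then extend. For $\k=\sum_v\k_v^{\oplus_v}\in E_q(\P_X)$ (finitely many $\k_v\neq0$), evaluating on each $p_u^{\oplus_u}$ and using the previous step gives
$$ \k\circ\Phi_\vp=\sum_{w\in V}\big(\k_w\circ\Phi_{\vp_{|_{X_w}}}\big)^{\oplus_{\widetilde{\vp}(w)}}, $$
which is a finite sum, so $\k\circ\Phi_\vp\in E_q(\P_X)$. As the terms sit at the distinct vertices $\widetilde{\vp}(w)$ and each $\vp_{|_{X_w}}$ is an isometry $F_w(\P_w)\to F_{\widetilde{\vp}(w)}(\P_{\widetilde{\vp}(w)})$, reindexing the sum of $q$-th powers via the bijection $\widetilde{\vp}$ of $V$ yields $\|\k\circ\Phi_\vp\|_q^q=\sum_w\|\k_w\|_{F_w(\P_w)}^q=\|\k\|_q^q$. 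Being linear and norm preserving on the dense subspace $E_q(\P_X)$, the operator $\Phi_\vp$ extends uniquely to a linear isometry of $F_q(\P_X)$, and a routine check that norm convergence forces pointwise convergence on $\P_X$ identifies this extension with $\k\mapsto\k\circ\Phi_\vp$, giving condition~2 in full.

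Finally I would pass to $\vp^{-1}$. By the third item of Remark \ref{free_autom_rmq} it is again an automorphism of $\mathcal{X}$, with $\widetilde{\vp^{-1}}=\widetilde{\vp}^{-1}$ and $(\vp^{-1})_{|_{X_v}}=(\vp_{|_{X_{\widetilde{\vp}^{-1}(v)}}})^{-1}$; as soon as these inverse restrictions are themselves homomorphisms of spaces with labelled partitions, the two steps above apply verbatim to $\vp^{-1}$, and Definition \ref{labpart_homo} then makes $\vp$ an automorphism of $(X,\P_X,F_q(\P_X))$. I expect the main obstacle to be precisely this last point together with the vertex bookkeeping of the third paragraph: the label index travels from $v$ to $\widetilde{\vp}^{-1}(v)$ while the coefficient index travels from $w$ to $\widetilde{\vp}(w)$, and one must be careful that condition~2 for $\vp^{-1}$ genuinely requires each $(\vp_{|_{X_w}})^{-1}$ to be a homomorphism. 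This is exactly why the hypothesis is imposed at \emph{every} vertex, and in the intended applications (the group action on the tree of $C$-cosets spaces of Example \ref{free_example}, where $\vp$ and $\vp^{-1}$ both arise from group elements) each restriction is in fact an isomorphism of spaces with labelled partitions, so its inverse is automatically a homomorphism and the argument closes.
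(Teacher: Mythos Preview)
Your proof is correct and follows essentially the same route as the paper: use Lemma \ref{free_act_proj} to verify condition~1, compute $\k\circ\Phi_\vp$ on $E_q(\P_X)$ vertex by vertex using the hypothesis on each $\vp_{|_{X_v}}$ to get condition~2, then pass to $F_q(\P_X)$ by density. Your final paragraph is actually more scrupulous than the paper, which simply concludes ``$\vp$ is an automorphism'' after verifying the two homomorphism conditions for $\vp$ alone; your observation that the inverse restrictions must themselves be homomorphisms --- automatic in the group-action applications but not formally contained in the stated hypothesis --- is a genuine point the paper leaves implicit.
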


\begin{proof}
 Let $p=p_v^{\oplus_v} \in \P_X$ with $v \in V$ and $p_v \in \P_v$. Then we have, by Lemma \ref{free_act_proj} :
 $$ \phi_{\vp}(p)=p_v \circ \pi_v \circ \vp = p_v \circ \vp_{|_{X_{\widetilde{\vp}^{-1}(v)}}} \circ \pi_{\widetilde{\vp}^{-1}(v)}=(p_v \circ \vp_{|_{X_{\widetilde{\vp}^{-1}(v)}}})^{\oplus_{\widetilde{\vp}^{-1}(v)}} \in \P_X,$$
 since $\vp_{|_{X_{\widetilde{\vp}^{-1}(v)}}}:X_{\widetilde{\vp}^{-1}(v)} \rightarrow X_v$ is a homomorphism of space with labelled partitions. \\
 Now, let $\k = \sum_{v \in V} \k_{v}^{\oplus_v} \in E_q(\P_X)$ where $\k_v \in F_v(\P_v)$ for all $v \in V$. We have, for $p=p_v^{\oplus_v} \in \P_X$ :
 \begin{center}
  \begin{tabular}{rl}
   $\k \circ \phi_{\vp}(p) $&$=\k((p_v \circ \vp_{|_{X_{\widetilde{\vp}^{-1}(v)}}})^{\oplus_{\widetilde{\vp}^{-1}(v)}})$, \medskip \\
   $\;$&$=\k_{\widetilde{\vp}^{-1}(v)}(p_v \circ \vp_{|_{X_{\widetilde{\vp}^{-1}(v)}}}), $ \medskip \\
   $\k \circ \phi_{\vp}(p) $&$=(\k_{\widetilde{\vp}^{-1}(v)}\circ \phi_{\vp_{|_{X_{\widetilde{\vp}^{-1}(v)}}}})^{\oplus_v}(p).$
  \end{tabular}
 \end{center}
 Thus, 
 $$ \k \circ \phi_{\vp} = \sum_{v \in V} (\k_{\widetilde{\vp}^{-1}(v)}\circ \phi_{\vp_{|_{X_{\widetilde{\vp}^{-1}(v)}}}})^{\oplus_v}. $$
 Hence, by making the substitution $v=\widetilde{\vp}^{-1}(v)$, we obtain :
 $$ \k \circ \phi_{\vp} = \sum_{v \in V} (\k_{v}\circ \phi_{\vp_{|_{X_{v}}}})^{\oplus_{\widetilde{\vp}(v)}} \in E_q(\P_X), $$
 since $\k_{v}\circ \phi_{\vp_{|_{X_{v}}}}$ belongs to $F_{\widetilde{\vp}(v)}(\P_{\widetilde{\vp}(v)})$ and $\k_{v}=0$ for all but finitely many $v$'s. \smallskip \\
 By completeness of $F_q(\P_X)$, we have, for all $\k \in F_q(\P_X)$, $\k \circ \phi_{\vp} \in F_q(\P_X)$. \medskip \\
 Moreover, since, for all $v \in V $, $\|\k_v \circ \phi_{\vp_{|_{X_{v}}}}\|_{_{F_{\widetilde{\vp}(v)}(\P_{\widetilde{\vp}(v)})}}=\|\k_v\|_{_{F_{v}(\P_{v})}}$, it follows that : \medskip 
 \begin{center}
  \begin{tabular}{rl}
   $\|\k \circ \phi_{\vp}\|_{_{q}}^q $&$=\sum_{v \in V} \|\k_{v}\circ \phi_{\vp_{|_{X_v}}}\|_{_{F_{\widetilde{\vp}(v)}(\P_{\widetilde{\vp}(v)})}}^q$, \medskip \\
   $\;$&$=\sum_{v \in V}\|\k_v\|_{_{F_{v}(\P_{v})}}^q, $ \medskip \\
   $\|\k \circ \phi_{\vp}\|_{_{q}}^q  $&$=\|\k\|_{_{q}}^q.$
  \end{tabular}
 \end{center} \smallskip 
 Then, $\vp$ is an automorphism of $(X,\P_{X},F_q(\P_{X}))$.
\end{proof}


 
 \subsubsection{Labelled partitions induced by the tree structure}
 
 We detail here the space with labelled partitions induced by the natural wall structure on the set of vertices of a tree $T$, and we consider the pullback of this structure on a tree of spaces of base $T$ via the projection $\rho : X \rightarrow V$ namely, for $x \in X_v \subset X$, $\rho(x)=v$. Here we denote by $[v,w]_{\E}$ the set of edges in the edge path between two vertices $v$ and $w$.  \medskip \\
 
 Let $q \geq 1$ and let $T=(V,\E)$ be a tree. Let $e$ be an edge with endpoints $v$ and $w$ in $V$. ``Removing'' this edge from $T$ gives rise to two complementary connected components of vertices, namely $h_{e,v}=\{u \in V \st d_T(v,u)< d_T(w,u)\}$ and $h_{e,w}=\{u \in V \st d_T(w,u)< d_T(v,u)\}$.
 
 \begin{center}
 \includegraphics[width=10cm]{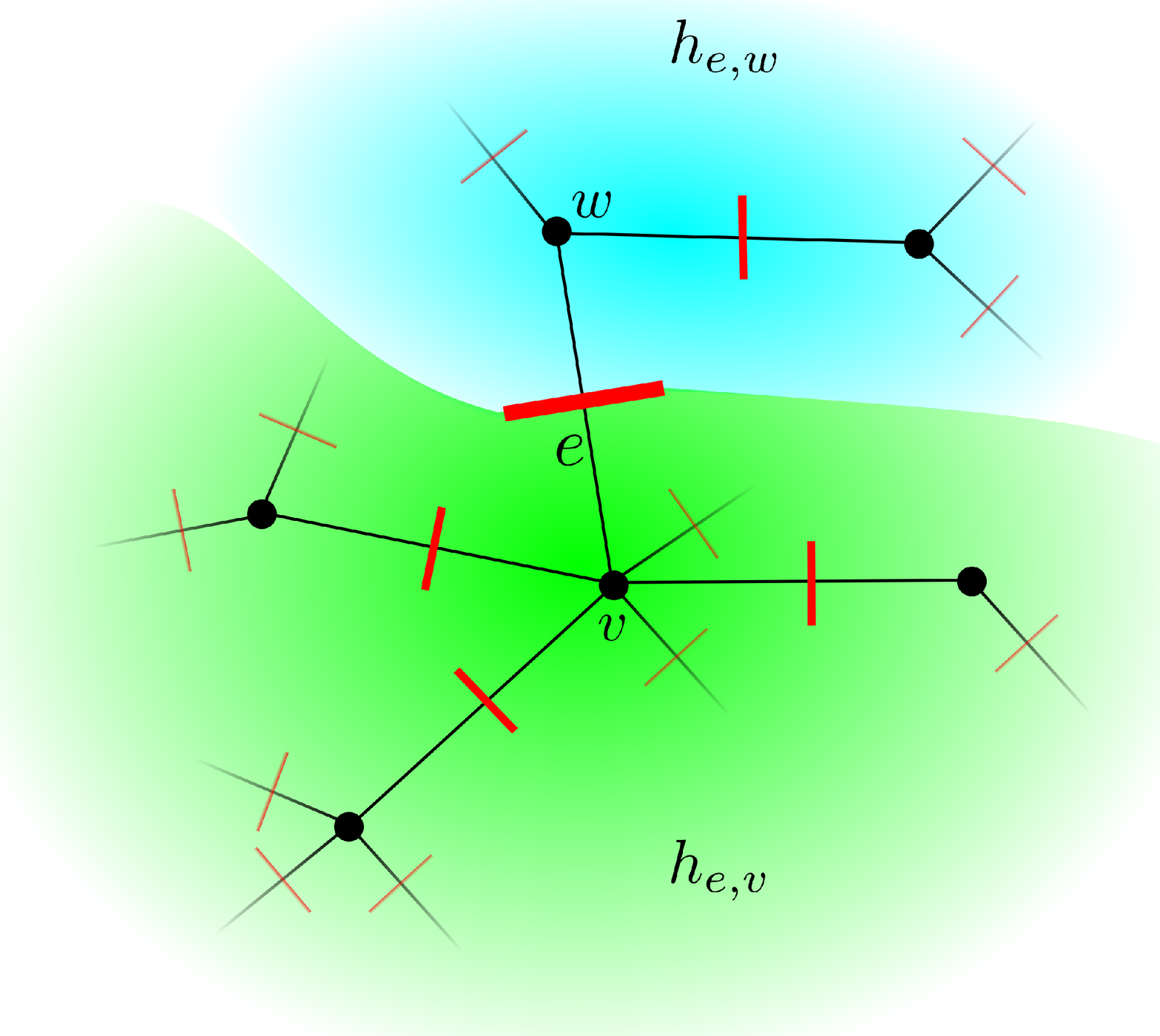} \\
  Wall structure on a tree induced by the edge set.
 \end{center}
 
 We then consider the following family of labelling functions on $V$ : 
 $$ \P = \{ 2^{-\frac{1}{q}}\;\cara_{h_{e,v}} \st e \in \E\text{ and } v \text{ is a endpoint of }e\},$$
 where $\cara_{h_{e,v}}$ is the characteristic function of the set $h_{e,v}$. \smallskip \\
 Notice that, for $v,w \in V$ : 
 \begin{center}\begin{equation*}
   |\cara_{h_{e,u}}(v)-\cara_{h_{e,u}}(w)|=\begin{cases} 1\text{ if }e \in [v,w]_{\E} \\
                            0\text{ otherwise.}
                           \end{cases}
   \end{equation*}
  \end{center}
 Hence, we have, for $v,w \in V$ :
 
 \begin{center}
  \begin{tabular}{rl}
  $\|c(v,w)\|_{\ell^q(\P)}^q$&$=\sum_{p \in \P}|p(v)-p(w)|^q,$ \smallskip \\
  $ \;$ &$=\frac{1}{2}\sum_{e \in \E} \sum_{u \in e} |\cara_{h_{e,u}}(v)-\cara_{h_{e,u}}(w)|^q,$ \medskip \\
  $ \;$ &$=\frac{1}{2} \sum_{e \in [v,w]_{\E}} \;2\;, $ \medskip \\
  $\|c(v,w)\|_{\ell^q(\P)}^q$ &$=\#[v,w]_{\E}=d_T(v,w)$ \smallskip \\
 \end{tabular}
 \end{center}
 
 Hence, $(V,\P,\ell^q(\P))$ is a space with labelled partitions. \medskip \\
 
 We can now consider the pullback (see Definition \ref{pullbackk}) of this space with labelled partitions via the projection $\rho: X \rightarrow V$ :
 
 \begin{df}\label{free_tree}
  Let $\mathcal{X}=\big(T,\left(X_v\right)_{v \in V},\left(X_e\right)_{e \in \E} \big)$ be a tree of spaces and $X$ be its total space.
  The space with labelled partitions on $X$ defined as the pullback of $(V,\P,\ell^q(\P))$ via $\rho :X \rightarrow V$ is called the \emph{structure of labelled partitions of $\mathcal{X}$ induced by the tree structure}.
 \end{df}
 
 \begin{prop}\label{free_tree_prop}
  Let $\mathcal{X}=\big(T,\left(X_v\right)_{v \in V},\left(X_e\right)_{e \in \E} \big)$ be a tree of spaces, $X$ be its total space and $\vp: X \rightarrow X$ be an automorphism of $\mathcal{X}$. \\
  Then $\vp$ is an automorphism of the space with labelled partitions of $\mathcal{X}$ induced by the tree structure.
 \end{prop}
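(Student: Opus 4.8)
The plan is to use that, by Definition \ref{free_tree}, the structure of labelled partitions of $\mathcal{X}$ induced by the tree structure is \emph{by construction} the pullback, via the projection $\rho : X \to V$, of the labelled-partitions structure $(V,\P,\ell^q(\P))$ carried by the wall structure of the tree, where $\P=\{2^{-1/q}\cara_{h_{e,v}} \st e\in\E,\ v\in e\}$; denote this pullback structure by $(X,\P_X,F(\P_X))$, so that $\P_X=\{p\circ\rho \st p\in\P\}$. An automorphism $\vp$ of $\mathcal{X}$ comes with a tree automorphism $\widetilde{\vp}$ satisfying $\rho\circ\vp=\widetilde{\vp}\circ\rho$ (Definition \ref{autom_tree}). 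The whole statement should therefore reduce to the claim that $\widetilde{\vp}$ is an automorphism of the base structure $(V,\P,\ell^q(\P))$, the automorphism being then transported up the pullback to $\vp$.

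The key step I would isolate first is a purely combinatorial sublemma: a tree automorphism permutes the walls. For an edge $e=\{v,w\}$ the halfspace $h_{e,v}$ is characterised by $h_{e,v}=\{u\in V \st d_T(v,u)<d_T(w,u)\}$, and since $\widetilde{\vp}$ is an isometry of $(V,d_T)$ sending $e$ to the edge $\widetilde{\vp}(e)=\{\widetilde{\vp}(v),\widetilde{\vp}(w)\}$, one gets $\widetilde{\vp}(h_{e,v})=h_{\widetilde{\vp}(e),\widetilde{\vp}(v)}$. Hence $\cara_{h_{e,v}}\circ\widetilde{\vp}=\cara_{h_{\widetilde{\vp}^{-1}(e),\widetilde{\vp}^{-1}(v)}}$, so $\Phi_{\widetilde{\vp}}:p\mapsto p\circ\widetilde{\vp}$ maps $\P$ \emph{bijectively} onto $\P$. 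Being a bijection of the index set $\P$, it preserves the $\ell^q(\P)$-norm by reindexing, so $\widetilde{\vp}$ is an automorphism of $(V,\P,\ell^q(\P))$.

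Next I would verify the two requirements of Definition \ref{labpart_homo} for $\vp$. For a labelling function $p'=p\circ\rho\in\P_X$, the intertwining relation gives
$$p'\circ\vp=p\circ(\rho\circ\vp)=(p\circ\widetilde{\vp})\circ\rho=\Phi_{\widetilde{\vp}}(p)\circ\rho\in\P_X,$$
using the sublemma. For the isometry condition, I would transport along the pullback isometry $T:c_X(x,y)\mapsto c_V(\rho(x),\rho(y))$ of Lemma \ref{pullbackpart}, which realises $F(\P_X)$ as a subspace of $\ell^q(\P)$. The point is that $T$ intertwines precomposition by $\Phi_\vp$ on $F(\P_X)$ with precomposition by $\Phi_{\widetilde{\vp}}$ on $\ell^q(\P)$, since by the formal identity $c_X(x,y)\circ\Phi_\vp=c_X(\vp x,\vp y)$ (Remark \ref{labpart_homo_rmq}) and $\rho\circ\vp=\widetilde{\vp}\circ\rho$,
$$T\big(c_X(x,y)\circ\Phi_\vp\big)=c_V(\widetilde{\vp}\rho x,\widetilde{\vp}\rho y)=T(c_X(x,y))\circ\Phi_{\widetilde{\vp}}.$$
As $T$ is isometric and $\Phi_{\widetilde{\vp}}$ acts isometrically on $\ell^q(\P)$, precomposition by $\Phi_\vp$ is isometric on the dense subspace $\Vect(c_X(x,y))$ and hence on $F(\P_X)$. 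Applying the same argument to $\vp^{-1}$, whose associated tree automorphism is $\widetilde{\vp}^{-1}$ by Remark \ref{free_autom_rmq}, shows $\vp^{-1}$ is also a homomorphism, so $\vp$ is an automorphism. Alternatively, one can invoke directly the equivariance clause of Lemma \ref{pullbackpart} for the group $\langle\vp\rangle$ acting on $X$ and, through $\vp\mapsto\widetilde{\vp}$, on $V$, with $\rho$ equivariant.

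The main obstacle is the geometric sublemma, and in particular checking that $\Phi_{\widetilde{\vp}}$ is genuinely a \emph{bijection} of $\P$ rather than merely a map into $\P$: this is exactly what makes the $\ell^q(\P)$-norm invariant under reindexing. Everything downstream --- verifying the homomorphism axioms and transporting the isometry up the pullback --- is then formal, the only care needed being the density argument to pass from $\Vect(c_X(x,y))$ to its completion $F(\P_X)$.
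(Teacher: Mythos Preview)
Your proof is correct and follows essentially the same route as the paper's: first show that the tree automorphism $\widetilde{\vp}$ is an automorphism of the base structure $(V,\P,\ell^q(\P))$, then use the equivariance $\rho\circ\vp=\widetilde{\vp}\circ\rho$ together with Lemma \ref{pullbackpart} to transport this up the pullback. The paper dispatches the first step in one line (``one can easily show'') and invokes Lemma \ref{pullbackpart} directly; you spell out the wall-permutation sublemma and verify the homomorphism axioms by hand before noting the same shortcut, so your argument is strictly a more detailed version of the paper's.
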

 
 \begin{proof}
  Let $\vp$ be an automorphism of $\mathcal{X}$. Since $\widetilde{\vp}$ is an automorphism of $T$, one can easily show that $\widetilde{\vp}$ is an automorphism of space with labelled partitions of $(V,\P,\ell^q(\P))$. Hence, as $\vp \circ \phi = \phi \circ \widetilde{\vp}$, by Lemma \ref{pullbackpart}, $\phi$ is an automorphism the space with labelled partitions of $\mathcal{X}$ induced by the tree structure.
 \end{proof}

 
 \subsection{Amalgamated free product and property $PL^p$}\label{free_amalg} 

 Let $\G=G*_C H$ be an amalgamated free product and let us consider the tree of $C$-cosets spaces $\mathcal{X}=\big(T,\left(X_v\right)_{v \in V},\left(\{\bu_e\}\right)_{e \in \E} \big)$ on which $\G$ acts by automorphisms (see Definition \ref{free_example_df}. \\
 Recall that $X_v=\g G/C$ if $v=\g G$, $X_v=\g H/C$ if $v=\g H$ and $\bu_{\g C} = \g C$. \\ 
 
 Let us consider systems of representatives $\rep(G/C)$ and $\rep(H/C)$ of $G/C$ and $H/C$ respectively, each containing the unit of the group as the representative of the class $C$. Every element in $\G$ can be expressed as a reduced word in terms of these systems of representatives (see, for instance, \cite{scowal}) :
 
  \begin{dfpr}[reduced word]\label{normalform}
  Let $G,H$ be groups and $C$ be a common subgroup. An element $\g$ of $G*_C H$ can be uniquely written in the following way :
  $$ \g = g_1h_1...g_nh_n c, $$
  where :
  \begin{itemize}
  \item[-] for $i=1,...,n$, $g_i \in \rep(G/C)$ and $h_i \in \rep(H/C)$;
  \item[-] for $i > 1$, $g_i\neq e_G $ and for $i < n$, $h_i \neq e_H$;
  \item[-] $c \in C$.
  \end{itemize}
  Such an expression of $\g$ is called a \emph{reduced word} (relatively to $\rep(G/C)$ and $\rep(H/C)$). \medskip \\
  Let us denote : \smallskip \\ 
 \indent $ R_{\G/G}=\{ g_1h_1...g_nh_n \st g_1h_1...g_nh_n \text{ is a reduced word with} h_n \neq e_H \},$ and \smallskip \\
 \indent $ R_{\G/H}=\{ g_1h_1...g_n \st g_1h_1...g_n \text{ is a reduced word} \}.$ (notice that $g_n\neq e_G$ by definition).
 Then $R_{\G/G}$ is a system of representatives of $\G/G$ in $\G$ and $R_{\G/H}$ is a system of representatives of $\G/H$ in $\G$.
  
 \end{dfpr}
 
 The maps we define below will allow us to endow the vertex sets of the tree of $C$-cosets spaces of $\G$ with the pullback structure of space with labelled partitions coming from $G/C$ and $H/C$.
 
 \begin{nt}\label{pullmaps}
 
  Let $\g \in R_{\G/G}$ and $\g' \in R_{\G/H}$. We set : \smallskip
 \begin{itemize}
  \item[-] $f_{\g G}: X_{\g G}\rightarrow G/C$ such that $\g g C \mapsto g C$ and, \\
  \item[-] $f_{\g' H}: X_{\g' H}\rightarrow H/C$ such that $\g' h C \mapsto h C$.
 \end{itemize}
 \end{nt}
 
 These maps satifies the following equivariance formulas :

 \begin{lem}\label{free_formula_coset}
  Let $\g \in \G$. We have : \\
  - Let $\g_1, \g_2 \in R_{\G/G}$. Assume there exists $g \in G$ such that $\g \g_1 = \g_2 g$. Then, for all $x \in X_{\g_1 G}$ :
  $$ f_{\g_2 G}(\g x)= g f_{\g_1 G}(x).$$
  - Let $\g_1, \g_2 \in R_{\G/H}$. Assume there exists $h \in H$ such that $\g \g_1 = \g_2 h$. Then, for all $x \in X_{\g_1 H}$ :
  $$ f_{\g_2 H}(\g x)= h f_{\g_1 H}(x).$$
 \end{lem}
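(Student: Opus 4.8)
The plan is to prove both formulas by direct computation, unwinding the definition of the $\G$-action on the total space given in Example \ref{free_example} together with the definition of the maps $f_{\g G}$ and $f_{\g' H}$ from Notation \ref{pullmaps}. The two statements are symmetric, so I would carry out the $G$-case in detail and indicate that the $H$-case follows by the same argument with $G$, $\rep(G/C)$ and $R_{\G/G}$ replaced by $H$, $\rep(H/C)$ and $R_{\G/H}$.

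For the first formula, fix $\g \in \G$ and $\g_1,\g_2 \in R_{\G/G}$ with $\g\g_1=\g_2 g$ for some $g \in G$. First I would write an arbitrary point of $X_{\g_1 G}=\g_1 G/C$ as $x=\g_1 a C$ with $a \in G$, so that $f_{\g_1 G}(x)=a C$ by definition. Applying the action formula $\g\cdot(\g_1 a C)=\g\g_1 a C$ of Example \ref{free_example} and the hypothesis $\g\g_1=\g_2 g$ then yields
$$ \g x=\g\g_1 a C=\g_2 (ga) C. $$
The next step is to observe that $\g x$ actually lies in the vertex set $X_{\g_2 G}$: since $ga \in G$ we have $\g_2(ga)G=\g_2 G$, hence $\g x \in \g_2 G/C=X_{\g_2 G}$ and $f_{\g_2 G}$ may legitimately be applied. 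Finally, from the defining formula $f_{\g_2 G}(\g_2 b C)=bC$ with $b=ga$ I obtain
$$ f_{\g_2 G}(\g x)=(ga)C=g\,(aC)=g\,f_{\g_1 G}(x), $$
the middle equality being the left action of $G$ on $G/C$. This is the asserted identity.

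There is no genuine difficulty here: the proof is essentially a matter of tracking which vertex set each point belongs to and checking that the maps involved are well defined on $C$-cosets. The only point deserving care --- and the nearest thing to an obstacle --- is confirming that the hypothesis $\g\g_1=\g_2 g$ with $g \in G$ forces $\g x$ into the vertex set $X_{\g_2 G}$ on which $f_{\g_2 G}$ is defined, rather than into some other vertex; this is exactly what the identity $\g_2(ga)G=\g_2 G$ guarantees. The $H$-case is treated identically, writing $x=\g_1 a C \in X_{\g_1 H}=\g_1 H/C$ with $a \in H$, using the action formula $\g\cdot(\g_1 a C)=\g\g_1 a C$ and the hypothesis $\g\g_1=\g_2 h$ to land in $X_{\g_2 H}$, and concluding $f_{\g_2 H}(\g x)=h\,f_{\g_1 H}(x)$.
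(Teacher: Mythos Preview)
Your proof is correct and follows exactly the same approach as the paper's own proof: write $x=\g_1 a C$ with $a\in G$, use $\g\g_1=\g_2 g$ to rewrite $\g x=\g_2(ga)C$, and read off $f_{\g_2 G}(\g x)=gaC=g\,f_{\g_1 G}(x)$. The only difference is that you make explicit the (immediate) check that $\g x$ lands in $X_{\g_2 G}$, which the paper leaves implicit.
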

 
 \begin{proof}
  Let $\g \in \G$ and $\g_1,\g_2 \in R_{\G/G}$ such that there exists $g \in G$ such that $\g \g_1 = \g_2 g$. For $x \in X_{\g_1 G}$, there exists $g_x \in G$ such that $x= \g_1g_x C$. Then we have :
  $$f_{\g_2 G}(\g x) = f_{\g_2 G}(\g_2 g g_xC)=g g_x C=g f_{\g_1 G}(x).$$
  A similar argument holds for the second statement.
 \end{proof}

 \begin{theo}\label{free_act_theo}
  Let $G,H$ be groups and $C$ be a common subgroup.
  Assume that $G$ acts by automorphisms on $(G/C,\P,F(\P))$ and $H$ acts by automorphisms on $(H/C,\P',F'(\P'))$. \\
  Let $\mathcal{X}$ be the tree of $C$-cosets spaces associated with $\G=G*_C H$ and $X$ be the total space of $\mathcal{X}$.
  Then $\G$ acts by automorphisms on $(X,\P_X,F_X(\P_X))$ such that $F_q(\P_X)$ is isometrically isomorphic to a closed subspace of $\rexp{q}{\bigoplus} F(\P)\oplus \rexp{q}{\bigoplus} F'(\P') \oplus \ell^q$. \\
  Moreover, considering $C \in X_G \subset X$, we have, for $\g = g_1h_1...g_nh_n c \in G*_C H$ :
  $$ \|c_X(\g C,C)\|^q=\sum_{k=1}^n (\|c_G(g_k C,C)\|^q_{_{F(\P)}}+\|c_H(h_kC,C)\|^q_{_{F'(\P')}})+d_T(\g G,G)^q. $$ \smallskip
  In particular, if $G \curvearrowright (G/C,\P,F(\P))$ and $H \curvearrowright (H/C,\P',F'(\P'))$ are proper, then $\G \curvearrowright (X,\P_X,F_X(\P_X))$ is proper.
 \end{theo}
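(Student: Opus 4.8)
The plan is to realize $(X,\P_X,F_X(\P_X))$ as the $q$-combination of two labelled partition structures living on the total space $X$. First I would equip each vertex set with a pullback structure: for $v=\mu G$ with $\mu\in\rep(\G/G)$ (resp.\ $v=\mu H$ with $\mu\in\rep(\G/H)$) I pull back $(G/C,\P,F(\P))$ (resp.\ $(H/C,\P',F'(\P'))$) along the bijection $f_{\mu G}$ (resp.\ $f_{\mu H}$) of Notation \ref{pullmaps}, using Lemma \ref{pullbackpart}. Proposition \ref{const_labfree_prop} then yields the vertex-induced structure $(X,\P^{\mathrm{vert}},F_q(\P^{\mathrm{vert}}))$ with $F_q(\P^{\mathrm{vert}})\simeq\bigoplus^q_{v}F_v(\P_v)$, while Definition \ref{free_tree} gives the tree-induced structure $(X,\P^{\mathrm{tree}},\ell^q(\P^{\mathrm{tree}}))$, whose pseudo-metric is $d_T(\rho(x),\rho(y))$. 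I set $\P_X=\P^{\mathrm{vert}}\cup\P^{\mathrm{tree}}$ and let $F_X(\P_X)$ be the $q$-direct sum $F_q(\P^{\mathrm{vert}})\oplus\ell^q(\P^{\mathrm{tree}})$, exactly as two families are merged in the direct-product construction of Proposition \ref{const_semidir_labpart}. Since $c_X(x,y)$ restricts to the vertex-separation map on $\P^{\mathrm{vert}}$ and to the tree-separation map on $\P^{\mathrm{tree}}$, both landing in the respective Banach spaces, $(X,\P_X,F_X(\P_X))$ is a space with labelled partitions and
$$\|c_X(x,y)\|^q=\sum_{v\in V}\|c_v(\pi_v(x),\pi_v(y))\|^q+d_T(\rho(x),\rho(y))^q.$$
As $\G$ is countable, $V$ is countable, so $F_X(\P_X)$ embeds isometrically into $\rexp{q}{\bigoplus}F(\P)\oplus\rexp{q}{\bigoplus}F'(\P')\oplus\ell^q$ by Proposition \ref{dirsum_banach}.

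Next I would check that the $\G$-action on $\mathcal{X}$ of Example \ref{free_example} is by automorphisms of this combined structure. For the tree summand this is automatic: by Proposition \ref{free_tree_prop} every automorphism of $\mathcal{X}$ is an automorphism of the tree-induced structure. For the vertex summand I invoke Proposition \ref{free_autom_part}, which reduces the claim to showing each restriction $\g|_{X_v}\colon X_v\to X_{\widetilde{\g}(v)}$ is a homomorphism of labelled partition spaces. Writing $v=\mu_1 G$ and $\g\mu_1=\mu_2 g$ with $\mu_1,\mu_2\in\rep(\G/G)$ and $g\in G$, Lemma \ref{free_formula_coset} gives $f_{\mu_2 G}\circ\g=g\circ f_{\mu_1 G}$; since $g$ acts by automorphisms on $(G/C,\P,F(\P))$ and the vertex structures are pulled back along the bijections $f_{\mu_i G}$, the restriction $\g|_{X_v}$ is an isometric homomorphism (and symmetrically for $H$-type vertices). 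As $\g$ then preserves each summand isometrically, it preserves the $q$-sum, so $\G$ acts by automorphisms on $(X,\P_X,F_X(\P_X))$.

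The formula for $\|c_X(\g C,C)\|^q$ is then read off the displayed identity along the geodesic from $G$ to $\g G$ in $T$. For $\g=g_1h_1\cdots g_nh_n c$ this geodesic has length $2n=d_T(\g G,G)$ and passes through $G,\,g_1H,\,g_1h_1G,\dots,\g G$; by Lemma \ref{free_proj_lem} only these vertices contribute. At such a vertex $v$ the projections $\pi_v(\g C)$ and $\pi_v(C)$ are the images $\s_{e,v}(\bu_e)$ of the two path-edges at $v$, and applying $f_v$ together with Lemma \ref{free_formula_coset} identifies their separation with $c_G(g_kC,C)$ at the $G$-type vertices (the terminal vertex $\g G$ contributing $0$, as $\g C$ maps to $C$ under $f_{\g G}$) and with $c_H(h_kC,C)$ at the $H$-type vertices. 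Summing gives
$$\|c_X(\g C,C)\|^q=\sum_{k=1}^n\big(\|c_G(g_kC,C)\|^q_{_{F(\P)}}+\|c_H(h_kC,C)\|^q_{_{F'(\P')}}\big)+d_T(\g G,G)^q.$$

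For properness, observe that the hypotheses already force $C$ to be finite, since the stabilizer of the basepoint of $G/C$ is $C$ and a metrically proper action on a discrete set has finite point stabilizers. Given $R\ge 0$, the bound $\|c_X(\g C,C)\|\le R$ forces $d_T(\g G,G)\le R$, hence $n$ bounded, and forces $\|c_G(g_kC,C)\|\le R$, $\|c_H(h_kC,C)\|\le R$ for every $k$; metric properness of the factor actions then confines each syllable $g_kC,h_kC$ to a finite set, leaving finitely many reduced words $g_1h_1\cdots g_nh_n$ and, $C$ being finite, finitely many $\g$. Thus $\|c_X(\g C,C)\|\to\infty$ as $\g\to\infty$ and $\G$ acts properly. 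I expect the main obstacle to be the combinatorial step of the third paragraph: tracking, at each vertex of the geodesic, which path-edge produces $\pi_v(\g C)$ versus $\pi_v(C)$ and verifying through $f_v$ and Lemma \ref{free_formula_coset} that the resulting separations are precisely the $g_k$- and $h_k$-terms, with the terminal vertex degenerating to $0$.
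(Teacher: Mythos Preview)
Your proposal is correct and follows essentially the same route as the paper: pull back the factor structures to each vertex set via the maps $f_{\mu G}$, $f_{\mu H}$, take the vertex-induced structure of Proposition~\ref{const_labfree_prop}, add the tree-induced structure of Definition~\ref{free_tree}, and verify the $\G$-action via Proposition~\ref{free_autom_part} together with Lemma~\ref{free_formula_coset}. Your direct $q$-sum of the two structures is equivalent to the paper's pullback along the diagonal $x\mapsto(x,x)$, and your geodesic bookkeeping matches the paper's (the terminal vertex indeed contributes $0$, as you say). Two small remarks: the geodesic length is $d_T(\g G,G)$ with $2n-2\le d_T\le 2n$ rather than exactly $2n$, since $g_1$ or $h_n$ may be trivial in the reduced form; and the paper actually defers the properness clause to Corollary~\ref{free_act_cor}, whereas you supply it directly, including the observation that properness on $G/C$ forces $C$ to be finite.
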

 
 \begin{proof}
 $\mathcal{X}=\big(T,\left(X_v\right)_{v \in V},\left(\{\bu_e\}\right)_{e \in \E} \big)$ tree of $C$-cosets spaces associated with $\G=G*_C H$ and let $R_{\G/G}$ and $R_{\G/H}$ be the systems of representative of $\G/G$ and $\G/H$ respectively defined in Definition-Proposition \ref{normalform}. \\
 Notice that $V=\{\g G \st \g \in R_{\G/G}\} \sqcup \{\g H \st \g \in R_{\G/H}\}$. \smallskip \\
 
  For $ \g G, \g' H \in V$ with $\g \in R_{\G/G}$ and $\g' \in R_{\G/H}$, we endow the vertex sets $X_{\g G}$ and $X_{\g H}$ with pullback structures of spaces with labelled partitions $(X_{\g G},\P_{\g G},F_{\g G}(\P_{\g G})$ and $(X_{\g H},\P_{\g H},F_{\g H}(\P_{\g H})$ given, respectively, by the maps introduced in Notation \ref{pullmaps} :\\
  - $f_{\g G}: X_{\g G}\rightarrow G/C$ such that $\g g C \mapsto g C$  and, \\
  - $f_{\g' H}: X_{\g' H}\rightarrow H/C$ such that $\g' H C \mapsto h C$. \smallskip \\
  
  Now, from these structures, we consider the space with labelled partitions $(X,\P_{V},F_q(\P_{V}))$ induced by the vertex sets given by Definition \ref{free_natural} and we denote by $c_V$ the associated separation map. \\
  We must prove that $\G$ acts by automorphisms of space with labelled partitions on $(X,\P_{V},F_q(\P_{V}))$. We showed in Example \ref{free_example} that $\G$ acts by automorphisms of tree of spaces on $\mathcal{X}$. Hence, by Lemma \ref{autom_tree}, it sufficies to show that, for all $\g \in \G$, the map $\g_{|_{X_v}}:X_v \rightarrow X_{\g v}$ is a homomorphisms of space with labelled partitions, for every $v \in V$: \smallskip \\
  Let $\g \in \G$. Let $v = \g_1 G \in V$ with $\g_1 \in R_{\G/G}$ and let $\g_2 \in R_{\G/G}$ such that $\g \g_1 = \g_2 g$ for some $g \in G$ i.e. $\g_2$ is the representative in $R_{\G/G}$ of the coset $\g \g_1 G$. Notice that $\g v = \g_2 G$. \\
  A labelling function of $\P_{\g v}$ is of the form $p \circ f_{\g_2 G}$ for some $p \in \P$, and we have, by Lemma \ref{free_formula_coset}, for all $x \in X_v$,$ p(f_{\g_2 G}(\g x))=p(g f_{\g_1 G}(x)).$ \smallskip \\
  Let us set the following maps : \smallskip \\
  - $\phi_{f_{\g_1 G}}: \P \rightarrow \P_v$ such that $p \mapsto p \circ f_{\g_1 G}$; \smallskip \\
  - $\phi_{f_{\g_2 G}}: \P \rightarrow \P_{\g v}$ such that $p \mapsto p \circ f_{\g_2 G}$;\smallskip \\
  - $\phi_{g}: \P \rightarrow \P$ such that $p \mapsto \{x \in G/C \mapsto p(gx)\}$ and\smallskip \\
  - $\phi_{\g}:\P_{\g v} \rightarrow \{\K\text{-valued functions on }X_v\}$ such that $p_{\g v} \mapsto p_{\g v} \circ \g_{|_{X_v}}$;\medskip \\
  Thus, by the previous equality, we have :
  $$ \phi_{\g}(p\circ f_{\g_2 G})=\phi_g(p) \circ f_{\g_1 G} \in \P_{X_v}, \; (\ast) $$
  since $\phi_g(p)$ belongs to $\P$. \medskip \\
  Now, let $\k \in F_v(\P_v)$. By using the definitions of pullback structures, we have :
  \begin{center}
   \begin{tabular}{rl}
    $\|\k \circ \phi_{\g}\|_{_{F_{\g v}(\P_{\g v})}}$&$= \|\k \circ \phi_{\g}\circ \phi_{f_{\g_2 G}}\|_{_{F(\P)}}$, \smallskip \\
    $\;$&$= \|\k \circ \phi_{f_{\g_1 G}} \circ \phi_{g}\|_{_{F(\P)}}$, by $(\ast)$, \smallskip \\
    $\;$&$= \|\k \circ \phi_{f_{\g_1 G}}\|_{_{F(\P)}}$, by $(\ast)$, \smallskip \\
    $\|\k \circ \phi_{\g}\|_{_{F_{\g v}(\P_{\g v})}}$&$= \|\k\|_{_{F_v(\P_v)}}$, \medskip \\
   \end{tabular}
  \end{center}
  Hence, $\g_{|_{X_v}}:X_v \rightarrow X_{\g v}$ is a homomorphism of space with labelled partitions and similar argument holds for vertices $v$ of the form $v = \g_1 H$ with $\g_1 \in R_{\G/H}$. As said before, it follows that $\G$ acts by automorphisms on $(X,\P_{V},F_q(\P_{V}))$. \bigskip \\
  
  Let $\g = g_1h_1...g_nh_n c \in G*_C H$ be a reduced word and consider the element $C \in X_G$. By Proposition \ref{const_labfree_prop}, we have :
  $$ \|c_V(\g C, C)\|_{_{F_q(\P_{V})}}^q=\sum_{v \in V}\|c_v(\pi_v(\g C),\pi_v(C))\|_{_{F_v(\P_v)}}^q. $$
  By Lemma \ref{free_proj_lem}, this sum is a finite sum over a subset of $\{v \in V \st v \text{ is between }G \text{ and }\g G\}$.
  But the vertices between $G$ and $\g G$ are the following : 
  $$ G,\;\;g_1H,\;\;\cdots\;,\; g_1h_1...h_{k-1}G,\;\;g_1h_1...h_{k-1}g_k H,\;\;g_1h_1...g_kh_k G,\;\;\cdots\;,\;g_1h_1...g_n H,\;\;\g G ,$$
  and notice that :
  $$ g_1h_1...h_{k-1}g_k C \text{ is the edge between }g_1h_1...h_{k-1}G\text{ and }g_1h_1...h_{k-1}g_k H,$$
  and 
  $$ g_1h_1...g_kh_k C \text{ is the edge between }g_1h_1...h_{k-1}g_kH\text{ and }g_1h_1...g_kh_k G.$$
  It follows that, for $v=g_1h_1...h_{k-1}g_kH$, 
  $$ \pi_v(C)=g_1h_1...h_{k-1}g_k C \text{ and } \pi_v(\g C)=g_1h_1...g_kh_k C.$$
  Then, by denoting $\g_k=g_1h_1...h_{k-1}g_k$, we have :
  \begin{center}
   \begin{tabular}{rl}
    $\|c_v(\pi_v(\g C),\pi_v(C))\|_{_{F_v(\P_v)}}$&$=\|c_v(\g_k h_k C,\g_k C)\|_{_{F_v(\P_v)}}$ \smallskip \\
    $\;$&$=\|c_H(f_{\g_kH}(\g_k h_k C),f_{\g_kH}(\g_k C))\|_{_{F'(\P')}}$ (notice that $\g_k \in R_{\G/H}$) \smallskip \\
    $\;$&$=\|c_H( h_k C, C)\|_{_{F'(\P')}}$ \\
   \end{tabular}
  \end{center} \smallskip 
  Now, for $v=g_1h_1...g_{k-1}h_{k-1}G$, 
  $$ \pi_v(C)=g_1h_1...g_{k-1}h_{k-1} C \text{ and } \pi_v(\g C)=g_1h_1...h_{k-1}g_{k} C.$$ 
  Hence, similarly, we have $\|c_v(\pi_v(\g C),\pi_v(C))\|_{_{F_v(\P_v)}}=\|c_G(g_k C, C)\|_{_{F(\P)}}.$ \smallskip \\
  Thus :
  $$ \|c_V(\g C, C)\|_{_{F_q(\P_{V})}}^q=\sum_{k=1}^n\|c_G(g_k C, C)\|_{_{F(\P)}}^q+\|c_H(h_k C, C)\|_{_{F'(\P')}}^q. $$ \medskip
  
  Let us now consider the structure of space with labelled partitions $(X,\P_T,\ell^q(\P_T))$ induces by the tree structure given by Definition \ref{free_tree}. By Proposition \ref{free_tree_prop}, $\G$ acts by automorphisms on $(X,\P_T,\ell^q(\P_T))$ and we have, for $\g = g_1h_1...g_nh_n c$ a reduced word, 
  $$ \|c_T(\g C, C)\|_{_{\ell^q(\P_{T})}}^q = d_T(\g G, G)= k, $$
  where $2n-2 \leq k \leq 2n$ (depending on the fact that $g_1$ and $h_1$ can possibly be trivial). \medskip \\
  
  Finally, we endow $X$ with a structure of labelled partitions $(X,\P_X,F_X(\P_X))$ given by the pullback of the product structure $(X\times X, \P_V^{\oplus} \sqcup \P_T^{\oplus}, F_q(\P_V)\oplus \ell^q(\P_T))$ via the $\G$-equivariant map $x \mapsto (x,x)$. Hence, we have, for $\g = g_1h_1...g_nh_n c$ and $C \in X_G$ :
  $$\|c_X(\g C,C)\|^q=\sum_{k=1}^n \|c_G(g_k C,C)\|^q_{_{F(\P)}}+\|c_H(h_kC,C)\|^q_{_{F'(\P')}}+d_T(\g G,G),$$ 
  where $c_X$ is the separation map associated with $\P_X$. \smallskip \\

 \end{proof}
 
 \begin{cor}\label{free_act_cor}
  Let $G,H$ be groups, $F$ be a common \emph{finite} subgroup and $q \geq 1$. Assume that $G$ acts \emph{properly} by automorphisms on $(G,\P_G,F_G(\P_G))$ and $H$ acts \emph{properly} by automorphisms on $(H,\P_H,F_H(\P_H))$. \\
  Then there exists a space with labelled partitions $(X,\P_X,F_X(\P_X))$ on which $G*_F H$ acts properly by automorphisms, and morevover $F_X(\P_X)$ is isometrically isomorphic to a closed subspace of $\rexp{q}{\bigoplus} F_G(\P_G)\oplus \rexp{q}{\bigoplus} F_H(\P_H) \oplus \ell^q$.
 \end{cor}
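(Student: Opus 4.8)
The plan is to reduce everything to Theorem~\ref{free_act_theo} by manufacturing, out of the proper actions on $(G,\P_G,F_G(\P_G))$ and $(H,\P_H,F_H(\P_H))$, a proper action by automorphisms of $G$ on a space with labelled partitions carried by the coset space $G/F$ and, identically, of $H$ on one carried by $H/F$, with the respective Banach spaces embedding isometrically into $F_G(\P_G)$ and $F_H(\P_H)$. Applying Theorem~\ref{free_act_theo} with $C=F$ to these two structures then yields a proper action of $\G=G*_F H$ on some $(X,\P_X,F_X(\P_X))$ together with an embedding $F_X(\P_X)\hookrightarrow \rexp{q}{\bigoplus}F_{G/F}\oplus\rexp{q}{\bigoplus}F_{H/F}\oplus\ell^q$; composing with the componentwise isometric embeddings $F_{G/F}\hookrightarrow F_G(\P_G)$ and $F_{H/F}\hookrightarrow F_H(\P_H)$ (which preserve the $\ell^q$-norm of the direct sum) produces the announced embedding into $\rexp{q}{\bigoplus}F_G(\P_G)\oplus\rexp{q}{\bigoplus}F_H(\P_H)\oplus\ell^q$. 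Thus the whole content is the passage from $G$ to $G/F$, and I carry it out only once.

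To build the structure on $G/F$, I would first convert the proper action of $G$ on $(G,\P_G,F_G(\P_G))$ into an affine one: by Proposition~\ref{labpart_labparttoaction} and Remark~\ref{labpart_labparttoaction_rmq} (the groups being discrete), $G$ acts isometrically affinely on $F_G(\P_G)$ with linear part $\pi_G(g)\k=\k\circ\Phi_{\tau_G(g)}$ and cocycle $b_G(g)=c_G(g,e)$, so that $\|b_G(g)\|=d_G(g,e)\to\infty$. I then average over the finite fiber, setting $\bar b:G/F\to F_G(\P_G)$, $\bar b(gF)=\tfrac1{|F|}\sum_{f\in F}c_G(gf,e)$. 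Summing over a full coset shows $\bar b$ is well defined on $G/F$, and a short computation with the cocycle relation gives the equivariance $\bar b(hgF)=\a_G(h)\bar b(gF)$, where $\a_G(g)=\pi_G(g)\,\cdot+b_G(g)$. By Lemma~\ref{actcanbanach}, $\a_G$ makes $G$ act by automorphisms on the natural space with labelled partitions of the Banach space $F_G(\P_G)$; pulling this back along the $G$-equivariant map $\bar b$ (Lemma~\ref{pullbackpart}) furnishes a space with labelled partitions $(G/F,\P_{G/F},F_{G/F})$ on which $G$ acts by automorphisms, with $F_{G/F}$ isometrically embedded in $F_G(\P_G)$ and with labelled-partition pseudo-metric $d_{G/F}(gF,g'F)=\|\bar b(gF)-\bar b(g'F)\|$ (Proposition~\ref{banachspacelabpart}).

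The one delicate point — and the main obstacle — is that averaging over $F$ must not destroy properness, and this is exactly where the affine picture is essential. Since $\bar b(gF)=\a_G(g)\bar x_0$ with $\bar x_0:=\bar b(eF)$ a fixed vector whose norm is finite (bounded by $\max_{f\in F}d_G(f,e)$, finite because $F$ is finite), the displacement of $\bar x_0$ obeys $\|\a_G(g)\bar x_0-\bar x_0\|\ge\|b_G(g)\|-2\|\bar x_0\|=d_G(g,e)-2\|\bar x_0\|$, using that $\pi_G$ is isometric. Hence $\|c_{G/F}(g\cdot eF,eF)\|=\|\bar b(gF)-\bar b(eF)\|\ge d_G(g,e)-2\|\bar x_0\|\to\infty$ as $g\to\infty$ in $G$, so the $G$-action on $G/F$ is proper by Proposition~\ref{labpart_proper_prop}. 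A naive triangle-inequality estimate only gives an upper bound and leaves open the possibility of cancellation under averaging; it is the affine-action argument, exploiting that a proper isometric affine action moves \emph{every} point to infinity, that rules this out. Performing the identical construction for $H$ and feeding both proper structures into Theorem~\ref{free_act_theo} completes the proof.
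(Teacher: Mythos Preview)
Your argument is correct and follows essentially the same route as the paper. The paper isolates your averaging step as a separate Lemma~\ref{finite_labpart}, defining directly the averaged labelling functions $p'(gF)=\tfrac1{|F|}\sum_{f\in F}p(gf)$ on $G/F$; the resulting separation map satisfies $c'(gF,F)=c(g,e)+\eta\circ\Phi_g-\eta$ with $\eta=\tfrac1{|F|}\sum_{f\in F}c(f,e)$, and the properness bound $\|c'(gF,F)\|\ge\|c(g,e)\|-2\|\eta\|$ is exactly your inequality $\|\a_G(g)\bar x_0-\bar x_0\|\ge\|b_G(g)\|-2\|\bar x_0\|$ (indeed $\eta=\bar x_0$ and $\eta\circ\Phi_g=\pi_G(g)\eta$). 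Your detour through Proposition~\ref{labpart_labparttoaction}, Lemma~\ref{actcanbanach} and the pullback Lemma~\ref{pullbackpart} is a slightly more conceptual packaging of the same computation; the paper's version is more self-contained but arrives at the identical estimate. Either way, one then invokes Theorem~\ref{free_act_theo} with $C=F$ to conclude.
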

 
 Before we prove this corollary, we need the following lemma :
 
 \begin{lem}\label{finite_labpart}
  Let $G$ be a group and $F$ be a finite subgroup of $G$. Assume that $G$ is endowed with a structure of space with labelled partitions $(G,\P,F(\P))$ on which it acts by automorphisms via left-translation. \\
  Then there exists a structure of space with labelled partitions $(G/F, \P', F'(\P'))$ on which $G$ acts by automorphisms via its natural action on the quotient $G/F$ and where $F'(\P')$ is isometrically isomorphic to a closed subspace of $F(\P)$. Moreover, there exists $K\geq 0$ such that, for all $g,g' \in G$ :
  $$ \|c(g,g')\|_{_{F'(\P')}}+K\geq \|c'(gF,g'F)\|_{_{F'(\P')}} \geq \|c(g,g')\|_{_{F'(\P')}}-K, $$
  where $c,c'$ are the respective separation maps of $(G,\P,F(\P))$ and $(G/F, \P', F'(\P'))$. \\
  In particular, $G\curvearrowright(G,\P,F(\P))$ is proper if, and only if, $G\curvearrowright(G/F, \P', F'(\P'))$ is proper.
 \end{lem}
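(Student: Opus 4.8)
The plan is to push the structure on $G$ down to $G/F$ by \emph{averaging the separation map over the finite group $F$}. Since there is no $G$-equivariant section $G/F\to G$, one cannot pull back $(G,\P,F(\P))$ directly; instead I would first build a $G$-equivariant map from $G/F$ into a Banach subspace of $F(\P)$, and then invoke the pull back construction of Lemma \ref{pullbackpart}. First I set $B=\overline{\Vect\big(c(g,g')\st g,g'\in G\big)}^{\|\cdot\|}$, a closed subspace of $F(\P)$; by Proposition \ref{labpart_labparttoaction} the map $\pi(g)\k=\k\circ\Phi_{\tau(g)}$ is an isometric representation of $G$ on $F(\P)$ leaving $B$ invariant, and $c(gx,gy)=\pi(g)c(x,y)$ for all $g,x,y$. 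I then define $\beta\colon G\to B$ by
$$ \beta(g)=\frac{1}{|F|^2}\sum_{f,f'\in F}c(gf,f'). $$
Replacing $g$ by $gf_0$ and reindexing the inner sum $f\mapsto f_0^{-1}f$ gives $\beta(gf_0)=\beta(g)$ for every $f_0\in F$, so $\beta$ descends to $\bar\beta\colon G/F\to B$, $\bar\beta(gF)=\beta(g)$. This well-definedness, which rests on summing over the whole of the finite group $F$, is the crux of the construction.

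Next I would check that $\beta$ is a $1$-cocycle for $\pi$. Writing $c'(gF,g'F):=\bar\beta(gF)-\bar\beta(g'F)$, the additivity relation $c(x,z)=c(x,y)+c(y,z)$ and the $G$-equivariance of $c$ yield, after averaging, that $c'$ is additive in the sense of Chasles, is antisymmetric (so $c'(x,x)=0$), and satisfies $c'(ghF,gF)=\pi(g)c'(hF,eF)$; hence $\beta(gh)=\pi(g)\beta(h)+\beta(g)$. Thus $\alpha(g)\k=\pi(g)\k+\beta(g)$ is an isometric affine action of $G$ on $B$, and $\bar\beta$ is $G$-equivariant for it: $\bar\beta(g'gF)=\alpha(g')\bar\beta(gF)$. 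By Lemma \ref{actcanbanach}, $G$ acts by automorphisms on the natural space with labelled partitions $(B,\P_B,\d(\P_B))$ on $B$; pulling this back along the $G$-equivariant map $\bar\beta$ via Lemma \ref{pullbackpart} produces the desired structure $(G/F,\P',F'(\P'))$ on which $G$ acts by automorphisms, with separation map satisfying $\|c'(gF,g'F)\|_{F'(\P')}=\|\bar\beta(gF)-\bar\beta(g'F)\|_{B}$. Since $F'(\P')$ embeds isometrically into $\d(\P_B)\simeq B\subseteq F(\P)$, it is isometrically isomorphic to a closed subspace of $F(\P)$.

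Finally I would establish the two-sided estimate. Left-invariance of $d$ (the action being by isometric automorphisms) gives $d(gf,g)=d(f,e)$, so each coset has diameter at most $K_0:=\max_{f\in F}d(f,e)<+\infty$. Decomposing $c(gf,g'f')=c(gf,g)+c(g,g')+c(g',g'f')$ by Chasles and averaging over $f,f'$ yields
$$ c'(gF,g'F)=c(g,g')+u_g+v_{g'}, $$
with $\|u_g\|\le K_0$ and $\|v_{g'}\|\le K_0$; hence $\big|\,\|c'(gF,g'F)\|-\|c(g,g')\|\,\big|\le 2K_0=:K$, which is the claimed inequality. The equivalence of properness then follows at once: taking $x_0=eF$, the set $\{g\st \|c'(gF,eF)\|\le R\}$ is squeezed between $\{g\st d(g,e)\le R-K\}$ and $\{g\st d(g,e)\le R+K\}$, so $G\curvearrowright(G/F,\P',F'(\P'))$ is proper precisely when $G\curvearrowright(G,\P,F(\P))$ is.

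The only genuine obstacle is the well-definedness on cosets together with the verification that the averaged object is an honest separation map; once $\beta$ is in hand, the affine-action machinery of Lemma \ref{actcanbanach} and Lemma \ref{pullbackpart} and the bounded-diameter estimate make the rest routine.
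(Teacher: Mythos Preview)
Your proof is correct, and at its core it coincides with the paper's: both average over the finite subgroup $F$ to obtain a separation map on $G/F$ that embeds isometrically in $F(\P)$, and both deduce the two-sided estimate from the fact that each coset has bounded diameter. Indeed, your $\bar\beta(gF)-\bar\beta(g'F)$ simplifies (the second average cancels in differences) to $\tfrac{1}{|F|}\sum_{f\in F}c(gf,g'f)$, which is exactly the element the paper uses to define the norm on $F'(\P')$; and your $\beta$ is just the original cocycle $g\mapsto c(g,e)$ modified by the coboundary of $\eta:=\tfrac{1}{|F|}\sum_{f}c(f,e)$, the same $\eta$ appearing in the paper's estimate.

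The difference is in packaging. The paper averages the labelling functions themselves, setting $p'(gF)=\tfrac{1}{|F|}\sum_{f}p(gf)$, and then defines $F'(\P')$ directly via the resulting injection into $F(\P)$; this is shorter and more hands-on. You instead assemble an honest isometric affine action on $B\subset F(\P)$ with cocycle $\beta$, then invoke Lemma~\ref{actcanbanach} and the pull-back Lemma~\ref{pullbackpart} to manufacture $(G/F,\P',F'(\P'))$. Your route is more machinery-heavy but has the virtue of making the $G$-action by automorphisms on the quotient structure automatic (it comes for free from $G$-equivariance of $\bar\beta$), whereas in the paper this step is asserted without a written verification. Either way the outcome is the same structure and the same constant $K=2\max_{f\in F}d(f,e)=2\|\eta\|$.
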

 
 \begin{proof}
  For $p \in \P$, we define the labelling function $p':G/F \rightarrow \K$, by, for $g \in G$ :
  $$ p'(gF):=\frac{1}{\#F}\sum_{f \in F}p(gf). $$
  Notice that $p'$ is well-defined since $\sum_{f \in F}p(g'f)=\sum_{f \in F}p(gf)$ for every $g' \in gF$. \smallskip \\
  We consider the family of labelling functions $\P'=\{p' \st p\in \P \}$ and we denote by $c'$ its associated separation function. For $p \in \P$, $g,g' \in G$, we have :
  $$ c'(gF,g'F)(p')= \frac{1}{\#F}\sum_{f \in F}c(gf,g'f)(p).$$
  Then, if we set $E'(\P'):=\Vect(c'(gF,g'F)\st g,g' \in G)$, the linear operator $T:E'(\P') \rightarrow F(\P)$ such that :
  $$T :c'(gF,g'F)\mapsto \frac{1}{\#F}\sum_{f \in F}c(gf,g'f),$$
  is injective. Hence we can consider the Banach space $F'(\P')$ defined as the closure of $E'(\P')$ endowed with the norm $\|\k\|_{_{F'(\P')}}:=\|T(\k)\|_{_{F(\P)}}$.
  As $c'(gF,g'F)$ belongs to $F'(\P')$ for all $g,g' \in G$, it follows that $(G/F,\P',F'(\P'))$ is a space with labelled partitions, and it is clear that $G$ acts on it by automorphisms via its natural action on $G/F$. \\
  Now, let us consider $\eta=\frac{1}{\#F}\sum_{f \in F}c(f,e) \in F(\P)$. Then we have, for $g \in G$ :
   \begin{center}

  \begin{tabular}{rl}$\|c'(gF,F)\|_{_{F'(\P')}}$&$=\|\frac{1}{\#F}\sum_{f \in F}c(gf,f)\|_{_{F(\P)}}$ \smallskip \\
                       &$=\|c(g,e)+\eta \circ \phi_g - \eta\|_{_{F(\P)}}.$ \smallskip \\
  \end{tabular}
 \end{center}
 As $ \k \mapsto \k \circ \phi_g$ is an isometry of $F(\P)$, we have, by triangular inequality, $\|\eta \circ \phi_g - \eta\|_{_{F(\P)}} \leq 2\|\eta\|_{_{F(\P)}}$. Hence, again by triangular inequalities : \\
 $$\|c(g,e)\|_{_{F(\P)}}+K \geq \|c'(gF,F)\|_{_{\F'(\P')}} \geq \|c(g,e)\|_{_{F(\P)}}-K,$$
 where $K=2\|\eta\|_{_{F(\P)}}$.
 \end{proof}
 
 \begin{proof}[Proof of Corollary \ref{free_act_cor}]
  Assume that $G$ acts properly by automorphisms on $(G,\P_G,F_G(\P_G))$ and $H$ acts properly by automorphisms on $(H,\P_H,F_H(\P_H))$. Then, by Lemma \ref{finite_labpart}, there exists spaces with labelled partitions $(G/F,\P,F(\P))$ and $(H/F,\P',F'(\P'))$ on which $G$ and $H$ respectively act properly via their natural actions on quotients. Thus we can apply Theorem \ref{free_act_theo} : $G*_F H$ acts by automorphisms on a space with labelled partitions $(X,\P_X,F_q(\P_X))$ where $X$ is the total space of the tree of $F$-cosets spaces associated with $G*_F H$ and $F_q(\P_X)\lesssim \rexp{q}{\bigoplus} F(\P)\oplus \rexp{q}{\bigoplus} F'(\P') \oplus \ell^q$. Moreover, we have, for $\g=g_1h_1...g_nh_n f \in G*_F H$ :
  $$\|c_X(\g F,F)\|^q=\sum_{k=1}^n \|c_G(g_k F,F)\|^q_{_{F(\P)}}+\|c_H(h_kF,F)\|^q_{_{F'(\P')}}+d_T(\g G,G).$$
  For $ R \geq 0$, $\|c_X(\g F,F)\| \leq R$ implies that $2n-2 \leq d_T(\g G,G) \leq R^q$, $\|c_G(g_k F,F)\|_{_{F(\P)}} \leq R$ and $\|c_H(h_k F,F)\|_{_{F'(\P')}} \leq R$. \\
  Hence, for all $R \geq 0$, $\{ \g=g_1h_1...g_nh_n f \st \|c_X(\g F,F)\| \leq R \}$ is a subset of :
  $$ \left\lbrace \g=g_1h_1...g_nh_n f \st n \leq 2(R^q +2) \text{ and } \|c_G(g_k F,F)\|_{_{F(\P)}} \leq R, \; \|c_H(h_k F,F)\|_{_{F'(\P')}} \leq R\right\rbrace, $$
  which is a finite set as $F$ is finite, $n$ is bounded and $G \curvearrowright (G/F,\P,F(\P))$, $H \curvearrowright (H/F,\P',F'(\P'))$ are proper. \\
  Thus, $\G \curvearrowright (X,\P_X,F_q(\P_X))$ is proper.
 \end{proof}
 
 \begin{proof}[Proof of Theorem \ref{amalgamflp}]
  The necessary condition is clear as $G,H$ are subgroups of $G*_F H$. \\
  Now, assume $G,H$ have property $PL^p$. It follows from Corollary \ref{labpart_aflp} that there exists structures of spaces with labelled partitions $(G,\P,F(\P'))$ and $(H,\P',F'(\P'))$ on which $G$ and $H$ respectively, act properly by automorphisms via left-translations. Moreover $F(\P)$ is isometrically isomorphic to a closed subspace of a $L^p$ space and so does $F'(\P')$.\\
  Thus, as $F$ is finite, by Corollary \ref{free_act_cor}, there exists a space with labelled partitions $(X,\P_X,F_X(\P_X))$ on which $G*_F H$ acts properly by automorphisms where :
  $$ F_X(\P_X) \lesssim \rexp{p}{\bigoplus} F(\P)\oplus \rexp{p}{\bigoplus} F'(\P') \oplus \ell^p.$$
  Hence, $ F_X(\P_X)$ is isometrically isomorphic to a closed subspace of a $L^p$ space by Proposition \ref{lpisomisom}. By Corollary \ref{labpart_aflp}, it follows that $G*_F H$ has property $PL^p$.
 \end{proof}

 \nocite{*}
  \bibliographystyle{alpha} 
 \bibliography{biblio}

\end{document}